\title[Satisfaction is not absolute]{Satisfaction is not absolute}
\author[Hamkins]{Joel David Hamkins}
\address[Joel David Hamkins]
{O'Hara Professor of Logic, University of Notre Dame, 100 Malloy Hall, Notre Dame, IN 46556 USA}
\email{jdhamkins@nd.edu}
\urladdr{https://jdh.hamkins.org}
\author[Yang]{Ruizhi Yang}
 \address[R. Yang]
         {School of Philosophy,
         Fudan University,
         220 Handan Road, Shanghai, 200433 China}
 \email{yangruizhi@fudan.edu.cn}
\thanks{This work was begun in June 2013 and largely completed at that time. For inexplicable reasons, despite that it was accepted by the journal subject to minor revisions, the authors became distracted with other projects and this paper languished until 2025. We are now pleased finally to bring the paper to completion. The first author is grateful for the support provided to him as a visitor to Fudan University in Shanghai in June 2013 and again in July 2025. The second author's research has been supported by the Major Project of the Key Research Institute of Humanities and Social Sciences under the Ministry of Education (Project No. 22JJD110002). He is thankful to the Institute for Mathematical Sciences at the National University of Singapore for continuing support for him to attend the logic summer school every summer for 4 years, where the authors first met. The authors are both thankful to Roman Kossak for fruitful discussions, to Andrew Marks for proving lemma \ref{Lemma.SubsetOfPlane}, and to W. Hugh Woodin for suggesting a simplification in one of our arguments. Commentary concerning this article can be made at http://jdh.hamkins.org/satisfaction-is-not-absolute.}
\newtheorem{theorem}{Theorem}
\newtheorem*{theorem*}{Theorem}
\newtheorem*{maintheorem*}{Main Theorem}
\newtheorem*{maintheorems*}{Main Theorems}
\newtheorem{corollary}[theorem]{Corollary}
\newtheorem*{corollary*}{Corollary}
\newtheorem*{corollaries*}{Corollaries}
\newtheorem{sublemma}{Lemma}[theorem]
\newtheorem{proposition}[theorem]{Proposition}
\theoremstyle{definition}
\newtheorem*{definition*}{Definition}
\newtheorem*{question*}{Question}
\newtheorem*{questions*}{Questions}
\newtheorem*{mainquestion*}{Main Question} % without numbering
\newtheorem*{openquestion*}{Open Question} % without numbering
\theoremstyle{remark}
\theoremstyle{plain}% in case new theorems are declared in main.
\newcommand{\QED}{\end{proof}}
\def\proclaim[#1]{{\bf #1}}
\def\BF#1.{{\bf #1.}}
\def\says#1:#2\par{\item[#1] #2\par}
\newcommand{\Godel}{G\"odel}
\newcommand{\Levy}{L\'{e}vy}
\newcommand{\N}{{\mathbb N}}
\newcommand{\Z}{{\mathbb Z}}
\newcommand{\R}{{\mathbb R}}
\newcommand{\dotminus}{\mathbin{\text{\@dotminus}}}
\newcommand{\@dotminus}{%
  \ooalign{\hidewidth\raise1ex\hbox{.}\hidewidth\cr$\m@th-$\cr}%
}
\newcommand{\of}{\subseteq}
\newcommand{\set}[1]{\{\,{#1}\,\}}
\newcommand{\elesub}{\prec}
\newcommand{\Th}{\mathop{\rm Th}}
\newcommand{\Con}{\mathop{{\rm Con}}}
\newcommand{\satisfies}{\models}
\newcommand{\proves}{\vdash}
\newcommand\dbrace{\hskip-1.5em\raise3pt\hbox{\rotatebox[origin=c]{-35}{$\left.\strut^{\phantom{|}}\right\}$}}}% useful for tetration
\newcommand\UParroW{{\setbox0\hbox{$\Uparrow$}\rlap{\hbox to \wd0{\hss$\mid$\hss}}\box0}}
\renewcommand{\setminus}{\raise.3ex\hbox{\rotatebox{-20}{$-$}}} % the usual setminus is absurdly huge and vertical
\newcommand{\Union}{\bigcup}
\newcommand{\trianglelt}{\vartriangleleft}
\newcommand{\smalllt}{\mathrel{\mathchoice{\raise2pt\hbox{$\scriptstyle<$}}{\raise1pt\hbox{$\scriptstyle<$}}{\raise0pt\hbox{$\scriptscriptstyle<$}}{\scriptscriptstyle<}}}
\newcommand{\smallleq}{\mathrel{\mathchoice{\raise2pt\hbox{$\scriptstyle\leq$}}{\raise1pt\hbox{$\scriptstyle\leq$}}{\raise1pt\hbox{$\scriptscriptstyle\leq$}}{\scriptscriptstyle\leq}}}
\newcommand{\lt}{\smalllt}
\newcommand{\ltomega}{{{\smalllt}\omega}}
   \def\DHLhksqrt#1#2{%
   \setbox0=\hbox{$#1\sqrt{#2\,}$}\dimen0=\ht0
   \advance\dimen0-0.2\ht0
   \setbox2=\hbox{\vrule height\ht0 depth -\dimen0}%
   {\box0\lower0.4pt\box2}}
\def\[#1]{\mathopen{\lbrack\!\lbrack}#1\mathclose{\rbrack\!\rbrack}}
\newbox\gnBoxA
\newbox\gnBoxB
\newdimen\gnCornerHgt
\newdimen\gnArgHgt
\def\gcode #1{%
\setbox\gnBoxA=\hbox{$#1$}%
\setbox\gnBoxB=\hbox{$\bar #1$}%
\gnArgHgt=\ht\gnBoxB%
\ifnum     \gnArgHgt<\gnCornerHgt \gnArgHgt=0pt%
\else \advance \gnArgHgt by -\gnCornerHgt%
\fi \raise\gnArgHgt\hbox{\tiny$\ulcorner$} \box\gnBoxA %
\raise\gnArgHgt\hbox{\tiny$\urcorner$}}
\newcommand{\UnderTilde}[1]{{\setbox1=\hbox{$#1$}\baselineskip=0pt\vtop{\hbox{$#1$}\hbox to\wd1{\hfil$\sim$\hfil}}}{}}
\newcommand{\Undertilde}[1]{{\setbox1=\hbox{$#1$}\baselineskip=0pt\vtop{\hbox{$#1$}\hbox to\wd1{\hfil$\scriptstyle\sim$\hfil}}}{}}
\newcommand{\undertilde}[1]{{\setbox1=\hbox{$#1$}\baselineskip=0pt\vtop{\hbox{$#1$}\hbox to\wd1{\hfil$\scriptscriptstyle\sim$\hfil}}}{}}
\newcommand{\UnderdTilde}[1]{{\setbox1=\hbox{$#1$}\baselineskip=0pt\vtop{\hbox{$#1$}\hbox to\wd1{\hfil$\approx$\hfil}}}{}}
\newcommand{\Underdtilde}[1]{{\setbox1=\hbox{$#1$}\baselineskip=0pt\vtop{\hbox{$#1$}\hbox to\wd1{\hfil\scriptsize$\approx$\hfil}}}{}}
\renewcommand{\th}{{\hbox{\scriptsize th}}}
\renewcommand{\implies}{\mathrel{\rightarrow}}
\renewcommand{\iff}{\mathrel{\leftrightarrow}}
\newcommand{\iso}{\cong}
\def\<#1>{\left\langle#1\right\rangle}
\newcommand{\Tr}{\mathop{\rm Tr}\nolimits}
\newcommand{\Ord}{\mathord{{\rm Ord}}}
\newcommand{\WO}{\mathord{{\rm WO}}}
\newcommand{\HC}{\mathop{{\rm HC}}}
\newcommand{\ZFC}{{\rm ZFC}}
\newcommand{\ZF}{{\rm ZF}}
\newcommand{\CH}{{\rm CH}}
\newcommand{\HF}{{\rm HF}}
\newcommand{\PA}{{\rm PA}}
\newcommand{\TA}{{\rm TA}}
\newcommand{\cell}[1]{\boxit{\hbox to 17pt{\strut\hfil$#1$\hfil}}}
\newcommand{\head}[2]{\lower2pt\vbox{\hbox{\strut\footnotesize\it\hskip3pt#2}\boxit{\cell#1}}}
\newcommand{\boxit}[1]{\setbox4=\hbox{\kern2pt#1\kern2pt}\hbox{\vrule\vbox{\hrule\kern2pt\box4\kern2pt\hrule}\vrule}}
\newcommand{\Col}[3]{\hbox{\vbox{\baselineskip=0pt\parskip=0pt\cell#1\cell#2\cell#3}}}
\newcommand{\tapenames}{\raise 5pt\vbox to .7in{\hbox to .8in{\it\hfill input: \strut}\vfill\hbox to
.8in{\it\hfill scratch: \strut}\vfill\hbox to .8in{\it\hfill output: \strut}}}
\newcommand{\Head}[4]{\lower2pt\vbox{\hbox to25pt{\strut\footnotesize\it\hfill#4\hfill}\boxit{\Col#1#2#3}}}
\newcommand{\Dots}{\raise 5pt\vbox to .7in{\hbox{\ $\cdots$\strut}\vfill\hbox{\ $\cdots$\strut}\vfill\hbox{\
$\cdots$\strut}}}
\newcommand{\df}{\it} % use italic for definition terms. Idea: also use this to create an index of definitions, if MakeIndex is true.
\newcommand{\Sat}{\mathord{\rm Sat}}
\begin{document}

\begin{abstract}
We prove that the satisfaction relation $\mathcal{N}\satisfies\varphi[\vec a]$ of first-order logic is not absolute between models of set theory having the structure $\mathcal{N}$ and the formulas $\varphi$ all in common. Two models of set theory can have the same natural numbers, for example, and the same standard model of arithmetic $\<\N,{+},{\cdot},0,1,{\lt}>$, yet disagree on their theories of arithmetic truth; two models of set theory can have the same natural numbers and the same arithmetic truths, yet disagree on their truths-about-truth, at any desired level of the iterated truth-predicate hierarchy; two models of set theory can have the same natural numbers and the same reals, yet disagree on projective truth; two models of set theory can have the same $\<H_{\omega_2},{\in}>$ or the same rank-initial segment $\<V_\delta,{\in}>$, yet disagree on which assertions are true in these structures.

On the basis of these mathematical results, we argue that a philosophical commitment to the determinateness of the theory of truth for a structure cannot be seen as a consequence solely of the determinateness of the structure in which that truth resides. The determinate nature of arithmetic truth, for example, is not a consequence of the determinate nature of the arithmetic structure $\N=\set{0,1,2,\ldots}$ itself, but rather, we argue, is an additional higher-order commitment requiring its own analysis and justification.
\end{abstract}

\maketitle

\section{Introduction}

Many mathematicians and philosophers regard the natural numbers $0,1,2,\ldots\,$, along with their usual arithmetic structure, as having a privileged mathematical existence, a Platonic realm of numbers in which assertions have definite, absolute truth values, independently of our ability to prove or discover them. Although there are some arithmetic assertions that we can neither prove nor refute---such as the consistency of the background theory in which we undertake our proofs---the view is that nevertheless there is a fact of the matter about whether any such arithmetic statement is true or false in the intended interpretation. The definite nature of arithmetic truth is often seen as a consequence of the definiteness of the structure of arithmetic $\<\N,{+},{\cdot},0,1,{\lt}>$ itself, for if the natural numbers exist in a clear and distinct totality in a way that is unambiguous and absolute, then (on this view) the first-order theory of truth residing in that structure---arithmetic truth---is similarly clear and distinct.

Feferman provides an instance of this perspective when he writes:
\begin{quote}
 In my view, the conception [of the bare structure of the natural numbers] is {\it completely clear}, and thence {\it all arithmetical statements are definite}. \cite[p.6--7]{Feferman:CommentsForEFIWorkshop} (emphasis original)
\end{quote}
It is Feferman's `thence' to which we call attention, and many mathematicians and philosophers seem to share this perspective. Martin writes:
\begin{quote}
What I am suggesting is that the real reason for confidence in first-order completeness is our confidence in the full determinateness of the concept of the natural numbers. \cite[p. 13]{Martin:CompletenessOrIncompletenessOfBasicMathematicalConcepts}
\end{quote}
The truth of an arithmetic statement, to be sure, does seem to depend entirely on the structure $\<\N,{+},{\cdot},0,1,{\lt}>$, with all quantifiers restricted to $\N$ and using only those arithmetic operations and relations, and so if that structure has a ``definite'' nature, then it would seem that the truth of the statement should be similarly definite.

Nevertheless, in this article we should like to tease apart these two ontological commitments, arguing that the definiteness of truth for a given mathematical structure, such as the natural numbers, the reals or higher-order structures such as $H_{\omega_2}$ or $V_\delta$, does not follow from the definite nature of the underlying structure in which that truth resides. Rather, we argue that the commitment to a theory of truth for a structure is a higher-order ontological commitment, going strictly beyond the commitment to a definite nature for the underlying structure itself. This is, of course, a commitment that we expect many of those authors to want to make---our main point is that it does not come for free.

We shall make our argument by first proving, as a strictly mathematical matter, that different models of set theory can have a structure identically in common, even the natural numbers, yet disagree on the theory of truth for that structure.
\begin{itemize}
  \item Models of set theory can have the same structure of arithmetic $\<\N,{+},{\cdot},0,1,{\lt}>$, yet disagree on arithmetic truth.
  \item Models of set theory can have the same reals, yet disagree on projective truth.
  \item Models of set theory can have a transitive rank initial segment $V_\delta$ in common, yet disagree about whether it is a model of \ZFC.
\end{itemize}
In these cases and many others, the theory of a structure is not absolute between models of set theory having that structure identically in common. This is a stronger kind of non-absoluteness phenomenon than the usual observation, via the incompleteness theorem, that models of set theory can disagree on arithmetic truth, for here we have models of set theory, which disagree about arithmetic truth, yet agree completely on the structure in which that truth resides.  Our mathematical claims will be made in sections \ref{Section.IndefiniteArithmeticTruth} through \ref{Secton.BeingModelZFCNotAbsolute}. Afterwards, on the basis of these mathematical observations, we shall draw our philosophical conclusions in section \ref{Section.Conclusions}.

{\it Satisfaction is absolute!} This slogan, heard by the first author in his graduate-student days---a fellow logic student would assert it with exaggerated double-entendre---was meant to evoke the idea that the satisfaction relation $\mathcal{N}\satisfies\varphi[\vec a]$ is absolute between all the various models of set theory able to express it. The set-theoretic universe $V$, for example, has the same arithmetic truths as the constructible universe $L$; and it doesn't matter, when asking whether $\mathcal{N}\satisfies\varphi[\vec a]$, whether one determines the answer in the universe $V$ or in a forcing extension $V[G]$ of it. So the slogan is true to a very high degree, such as in the case of Shoenfield absoluteness and \Levy\ absoluteness, and in particular, it is true whenever the formula $\varphi$ has standard-finite length in the meta-theory or between any two models of set theory for which at least one has access to the satisfaction relation of the other, since any model of set theory that can see two satisfaction relations for a structure will see by induction on formulas that they must agree. Nevertheless, the main theorems of this article show that the slogan is not true when one uses a broader concept of absoluteness, namely, satisfaction is {\it not} absolute, for there can be models of set theory with a structure $\mathcal{N}$ and sentence $\sigma$ in common, but which disagree about whether $\mathcal{N}\satisfies\sigma$.

Before proceeding further, we should like to remark on the folklore nature of some of the mathematical arguments and results contained in sections \ref{Section.IndefiniteArithmeticTruth} through \ref{Secton.BeingModelZFCNotAbsolute} of this article. Theorems \ref{Theorem.SameNDifferentTruths} and \ref{Theorem.SameModelNDifferentS} and their consequences, for example, are proved here using only well-known classical methods. These arguments could be considered as a part of the mathematical folklore of the subject of models of arithmetic, a subject filled with many fascinating results about automorphisms of nonstandard models of arithmetic and of set theory and the images of non-definable sets in computably saturated models. For example, Schlipf \cite{Schlipf1978:TowardModelTheoryThroughRecursiveSaturation} proved many basic results about computably saturated models, including the case of $\<\mathcal{M},X>$ where $X$ is not definable in $M$, a case which figures in our theorems \ref{Theorem.SameNDifferentTruths} and \ref{Theorem.SameModelNDifferentS}. For example, Schlipf proves that if $\mathcal{M}\satisfies\ZF$ is resplendent, then there is a cofinal set of indiscernibles $I$ in the ordinals of $\mathcal{M}$ such that for each $\alpha\in I$ we have $V_\alpha^{\mathcal{M}}\elesub \mathcal{M}$ and $V_\alpha^{\mathcal{M}}\iso\mathcal{M}$; if $\mathcal{M}$ is also countable and $V_\alpha^{\mathcal{M}}\elesub\mathcal{M}$, then there are $2^\omega$ many distinct isomorphisms $V_\alpha^{\mathcal{M}}\cong\mathcal{M}$; and  $\mathcal{M}$ is isomorphic to some topless initial segment $\mathcal{N}\elesub\mathcal{M}$, that is, for which $\mathcal{M}$ has no supremum to the ordinals of $\mathcal{N}$. Kossak and Kotlarski \cite{KossakKotlarski1988:ResultsOnAutomorphismsOfRecursivelySaturatedModelsOfPA} identified circumstances under which a nondefinable subset $X$ of a countable model $\mathcal{M}\satisfies\PA$ must have the maximum number of automorphic images in $\mathcal{M}$, including the case where $X$ is an inductive satisfaction class. Schmerl subsequently proved that every undefinable class $X$ in such a model $\mathcal{M}$ has continuum many automorphic images. In other work, Kossak and Kotlarski \cite{KossakKotlarski92:GameApproximationsOfSatisfactionClassesAndTheProblemOfRatherClasslessModels} proved that if $\mathcal{M}$ is a model of \PA\ with a full inductive satisfaction class, then it has full inductive satisfaction classes $S_1$ and $S_2$ which disagree on a set of sentences that is coinitial with the standard cut. So the topic is well-developed and much is known. Concerning the specific results we prove in this article, experts in the area seem instinctively to want to prove them by means of resplendency and the other sophisticated contemporary ideas that frame the current understanding of the subject---showing the depth and power of those methods---and indeed one may prove the theorems via resplendency. Nevertheless, our arguments here show that elementary methods suffice.

\section{Indefinite arithmetic truth}\label{Section.IndefiniteArithmeticTruth}

Let us begin with what may seem naively to be a surprising case, where we have two models of set theory with the same structure of arithmetic $\<\N,{+},{\cdot},0,1,{\lt}>$, but different theories of arithmetic truth.

\begin{theorem}\label{Theorem.SameNDifferentTruths}
 Every consistent extension of \ZFC\ has two models $M_1$ and $M_2$, which agree on the natural numbers and on the structure $\<\N,{+},{\cdot},0,1,{\lt}>^{M_1}=\<\N,{+},{\cdot},0,1,{\lt}>^{M_2}$, but which disagree their theories of arithmetic truth, in the sense that there is in $M_1$ and $M_2$ an arithmetic sentence $\sigma$, such that $M_1$ thinks $\sigma$ is true, but $M_2$ thinks it is false.
$$\begin{tikzpicture}[scale=.8,xscale=.8]
 \node at (0,1) (N) {$\bullet$};
 \node[anchor=south,scale=.8] at (N) {$\N$};
 \draw (0,1) --(0,0) --(-.3,1) to [out=110, in=-25](-2,3) --(-.2,3) to [out=-35,in=70](.3,1) --(-.3,1) to [out=110,in=-145] (.2,3) --(2,3) to [out=-155,in=70](.3,1) --(0,0);
 \node[anchor=south] at (-1,3) {$M_1$};
 \node[anchor=south] at (1,3) {$M_2$};
\end{tikzpicture}
\qquad
\vbox{\hbox{$M_1,M_2\satisfies\ZFC$}\medskip\hbox{$\N^{M_1}=\N^{M_2}$}\medskip\hbox{$M_1$ believes $\N\satisfies\sigma$}\medskip\hbox{$M_2$ believes $\N\satisfies\neg\sigma$}}
$$
\end{theorem}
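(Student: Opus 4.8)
The plan is to build both models from a single countable recursively saturated model of the given theory, exploiting the automorphisms of its necessarily nonstandard copy of the natural numbers. Fix a consistent $T\supseteq\ZFC$; it is standard that $T$ has a countable recursively saturated model $\calM$. Put $N=\<\N,{+},{\cdot},0,1,{\lt}>^{\calM}$; since this structure is definable in $\calM$, a recursive type over $N$ pulls back to a recursive type over $\calM$ and is thus realized, so $N$ is itself a countable recursively saturated model of $\PA$, and in particular $N$ is nonstandard, the genuine standard model not being recursively saturated. Let $\Tr=\Tr^{\calM}\of N$ be the arithmetic satisfaction class as computed in $\calM$, namely the set of (codes of) arithmetic sentences $\sigma$ for which $\calM$ believes $\N\satisfies\sigma$; this is a definable, hence well-defined, element of $\calM$, since $\ZFC$ proves that every set structure has a unique satisfaction relation.

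Two properties of $\Tr$ drive the argument. First, viewed as a subset of $N$, it is \emph{inductive}: $\<N,\Tr>$ satisfies $\PA$ in the language with a predicate for $\Tr$. This is because $\Tr$ is an element of $\calM\satisfies\ZFC$ and $\ZFC$ proves that $\<\omega,{+},{\cdot},0,1,{\lt},A>$ satisfies the induction scheme for every $A\of\omega$; applying this inside $\calM$ with $A=\Tr$ and reading it externally gives the claim. Second, $\Tr$ is \emph{not} definable over $N$, even allowing parameters from $N$, for such a definition would be witnessed inside $\calM$, contradicting Tarski's theorem on the undefinability of arithmetic truth --- which $\ZFC$ proves, the usual diagonal argument going through unchanged in the presence of a parameter, and entirely internally to $\calM$ so that its nonstandardness is no obstacle. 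With these in hand I would invoke the classical fact --- in the circle of ideas of Schlipf and of Kossak--Kotlarski recalled in the introduction, and sharpened by Schmerl --- that an undefinable inductive class of a countable recursively saturated model of $\PA$ is moved by some automorphism of the model (indeed it has continuum many automorphic images). This produces an automorphism $e$ of $N$ with $e[\Tr]\neq\Tr$.

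It remains to package this as two models of $T$ whose arithmetic structures are not merely isomorphic but literally the same. Take $M_1=\calM$, and let $M_2$ be the isomorphic copy of $\calM$ obtained by transporting its membership relation along a bijection $g$ that extends $e\colon N\to N$ and maps the remaining elements of $\calM$ into a set disjoint from $N$, so that $g\colon\calM\iso M_2$ and $M_2\satisfies T$. Since $e$ is an automorphism of $N$ it preserves ${+},{\cdot},0,1,{\lt}$, whence $\<\N,{+},{\cdot},0,1,{\lt}>^{M_2}=e[N]=N=\<\N,{+},{\cdot},0,1,{\lt}>^{M_1}$ on the nose; and since the arithmetic satisfaction class is uniformly definable in the language of set theory and isomorphisms preserve definable sets, $\Tr^{M_2}=g[\Tr]=e[\Tr]\neq\Tr=\Tr^{M_1}$. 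Choosing a code $\sigma$ in the symmetric difference, say $\sigma\in\Tr^{M_1}\minus\Tr^{M_2}$, both models recognize $\sigma$ as an arithmetic sentence because they share $\<\N,{+},{\cdot},0,1,{\lt}>$, and $M_1$ believes $\N\satisfies\sigma$ while $M_2$ --- knowing via $\ZFC$ that its satisfaction class decides every sentence --- believes $\N\satisfies\neg\sigma$, as required.

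The one genuinely non-routine ingredient, and the place where real work would be needed for a self-contained treatment, is the assertion that $\Tr$ is moved by some automorphism of $N$; Tarski's theorem supplies the undefinability that makes this possible, and everything else is bookkeeping --- the transport along $g$ being just the device that upgrades ``isomorphic'' to ``identical.'' Alternatively one could produce a second full inductive satisfaction class on $N$ directly in the manner of Kossak--Kotlarski, or route the construction of $M_2$ through the resplendency of $\<N,\Tr>$; but the automorphism argument uses only well-known classical methods and keeps the machinery to a minimum.
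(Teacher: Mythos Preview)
Your argument is correct and follows essentially the same route as the paper's direct proof: start from a countable $\omega$-nonstandard model of the theory, observe that $\Tr=\TA^{\calM}$ is undefinable in $N=\N^{\calM}$ by Tarski, produce an automorphism of $N$ that moves $\Tr$, and transport $\calM$ along (an extension of) that automorphism to obtain $M_2$ with literally the same arithmetic structure but a different truth class. The only difference is that where you invoke the moved-by-an-automorphism fact as a black box, the paper unpacks it inline: it notes that the expanded structure $\<N,\Tr>$ is itself computably saturated (the same definability-in-$\calM$ argument you gave for $N$ alone works for $\<N,\Tr>$, since $\Tr$ is definable in $\calM$), uses saturation plus undefinability to find $\sigma\in\Tr$ and $\tau\notin\Tr$ realizing the same $1$-type in $N$, and then obtains the automorphism by back-and-forth---exactly the ``real work'' you flagged as the non-routine step.
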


Thus, two models of set theory can agree on which natural numbers exist and agree on all the details of the standard model of arithmetic, yet disagree on which sentences are true in that model. The proof is elementary, but before giving the proof, we should like to place the theorem into the context of some classical results on arithmetic truth, particularly Krajewski's work \cite{Krajewski1974:MutuallyInconsistentSatisfactionClasses,Krajewski1976:Non-standardSatisfactionClasses} on incompatible satisfaction classes, explained in theorem \ref{Theorem.Krajewski}.

Inside every model of set theory $M\satisfies\ZFC$, we may extract a canonical model of arithmetic, the structure $\<\N,{+},{\cdot},0,1,{\lt}>^M$, which we henceforth denote simply by $\N^M$, arising from what $M$ views as the standard model of arithmetic. Namely, $\N^M$ is the structure whose objects are the objects that $M$ thinks to be natural numbers and whose operations and relations agree with what $M$ thinks are the standard arithmetic operations and relations on the natural numbers. Let us define that a {\df \ZFC-standard} model of arithmetic, or just a {\df standard} model of arithmetic (as opposed to \emph{the} standard model of arithmetic), is a model of arithmetic that arises in this way as $\N^M$ for some model $M\satisfies\ZFC$. In other words, {\it a} standard model of arithmetic is one that is thought to be {\it the} standard model of arithmetic from the perspective of some model of \ZFC. More generally, for any set theory $T$ we say that a model of arithmetic is a {\df $T$-standard} model of arithmetic, if it arises as $\N^M$ for some $M\satisfies T$.

Every model of set theory $M\satisfies\ZFC$ has what it thinks is the true theory of arithmetic $\TA^M$, the collection of $\sigma$ thought by $M$ to be (the \Godel\ code of) a sentence in the language of arithmetic, true in $\N^M$. (In order to simplify notation, we shall henceforth identify formulas with their \Godel\ codes.) The theory $\TA^M$ is definable in $M$ by means of the recursive Tarskian definition of truth-in-a-structure, although it is not definable in $\N^M$, by Tarski's theorem on the non-definability of truth. Note that when $\N^M$ is nonstandard, this theory will include many nonstandard sentences $\sigma$, which do not correspond to any actual assertion in the language of arithmetic from the perspective of the metatheory, but nevertheless, these sentences gain a meaningful truth value inside $M$, where they appear to be standard, via the Tarski recursion as carried out inside $M$. In this way, the \ZFC-standard models of arithmetic can be equipped with a notion of truth, which obeys the recursive requirements of the Tarskian definition.

These relativized truth predicates are instances of the more general concept of a satisfaction class for a model of arithmetic (see \cite{KossakSchmerl2006:TheStructureOfModelsOfPA} and \cite{Kaye1991:ModelsOfPeanoArithemtic} for general background). Given a model of arithmetic $\mathcal{N}=\<N,{+},{\cdot},0,1,{\lt}>$, we say that a subclass $\Tr\of N$ is a {\df truth predicate} or a {\df full satisfaction class} for $\mathcal{N}$---we shall use the terms interchangeably---if every element of $\Tr$ is a sentence in the language of arithmetic as viewed by $\mathcal{N}$, and such that $\Tr$ obeys the recursive Tarskian definition of truth:
\begin{enumerate}
 \item[1.] (atomic)  For each atomic sentence $\sigma$ in $N$, we have $\sigma\in \Tr$ just in case $N$ thinks $\sigma$ is true. (Note that the value of any closed term may be uniquely evaluated inside $\mathcal{N}$ by an internal recursion, and so every model of arithmetic has a definable relation for determining the truth of atomic assertions.)
 \item[2.] (conjunction) $\sigma\wedge\tau\in \Tr$ if and only if $\sigma\in \Tr$ and $\tau\in \Tr$.
 \item[3.] (negation) $\neg\sigma\in \Tr$ if and only if $\sigma\notin \Tr$.
 \item[4.] (quantifiers) $\exists x\, \varphi(x)\in \Tr$ if and only if there is some $n\in N$ such that $\varphi(\bar n)\in \Tr$, where $\bar n$ is the corresponding term $\underbrace{1+\cdots+1}_n$ as constructed in $\mathcal{N}$.
\end{enumerate}
Note that $\Tr$ is applied only to assertions in the language of arithmetic, not to assertions in the expanded language using the truth predicate itself (but we look at iterated truth predicates in section \ref{Section.SatisfactionIsNotAbsolute}). A truth predicate $\Tr$ for a model $\mathcal{N}=\<N,{+},{\cdot},0,1,{\lt}>$ is {\df inductive}, if the expanded structure $\<N,{+},{\cdot},0,1,{\lt},\Tr>$ satisfies $\PA(\Tr)$, the theory of \PA\ in the language augmented with a predicate symbol for $\Tr$, so that mentions of $\Tr$ may appear in instances of the induction axiom. The theory of true arithmetic $\TA^M$ arising in any model of set theory $M$ is easily seen to be an inductive truth predicate for the corresponding \ZFC-standard model of arithmetic $\N^M$ arising in that model of set theory, simply because \ZFC\ proves that $\N$ satisfies the second-order Peano axioms, and so $\ZFC$ proves that every subset of $\N$ is inductive.

A principal case of Tarski's theorem on the non-definability of truth is the fact that no model of arithmetic $\mathcal{N}$ can have a truth predicate that is definable in the language of arithmetic. This is simply because for every arithmetic formula $\tau(x)$ there is by the \Godel\ fixed-point lemma a sentence $\sigma$ such that $\PA\proves\sigma\iff\neg\tau(\sigma)$, and so we would have either that $\sigma$ is true in $\mathcal{N}$ while $\tau(\sigma)$ fails, or that $\sigma$ is false in $\mathcal{N}$ while $\tau(\sigma)$ holds; either of these possibilities would mean that the collection of sentences satisfying $\tau$ could not satisfy the recursive Tarskian truth requirements, applied up to the logical complexity of $\sigma$, which is a standard-finite sentence.

Krajewski observed that a model of arithmetic can have different incompatible truth predicates, a fact we find illuminating for the context of this paper, and so we presently give an account of it. The argument is pleasantly classical, relying principally only on Beth's implicit definability theorem and Tarski's theorem on the non-definability of truth.

\begin{theorem}[\cite{Krajewski1974:MutuallyInconsistentSatisfactionClasses,Krajewski1976:Non-standardSatisfactionClasses}]\label{Theorem.Krajewski}
 There are models of arithmetic with different incompatible inductive truth predicates. Indeed, every model of arithmetic $N_0\satisfies\PA$ that admits an (inductive) truth predicate has an elementary extension $N$ that admits several incompatible (inductive) truth predicates.
\end{theorem}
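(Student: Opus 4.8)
The plan is to derive the theorem from Beth's implicit definability theorem together with Tarski's theorem on the non-definability of truth, exactly as advertised. Work in the language $L_A(\Tr)$ of arithmetic augmented by a fresh unary predicate symbol $\Tr$, and let $\Theta$ be the $L_A(\Tr)$-theory asserting ``$\Tr$ is an inductive full satisfaction class'': it consists of the (finitely many, universally quantified) Tarskian recursion requirements for $\Tr$, the sentence stating that every element of $\Tr$ is a sentence in the language of arithmetic, and the induction scheme in the full language $L_A(\Tr)$. Since $N_0$ admits an inductive truth predicate $\Tr_0$ by hypothesis, the structure $\<N_0,\Tr_0>$ is a model of $\Theta$; in particular $\Theta$ is consistent and $\Theta$ proves $\PA$. (For the non-inductive version one simply drops the $L_A(\Tr)$-induction scheme from $\Theta$; everything below goes through unchanged.)

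To bring in the model $N_0$ itself, let $L^+=L_A(\Tr)\union\set{c_a : a\in N_0}$ with a new constant $c_a$ naming each element of $N_0$, and let $\Theta^+$ be $\Theta$ together with the pure, $\Tr$-free elementary diagram of $N_0$ in the language $L_A\union\set{c_a : a\in N_0}$. Interpreting each $c_a$ as $a$, the structure $\<N_0,\Tr_0>$ is again a model of $\Theta^+$, so $\Theta^+$ is consistent; and the $L_A$-reduct of any model $\mathcal M\satisfies\Theta^+$ is an elementary extension of $N_0$, via the elementary embedding $a\mapsto c_a^{\mathcal M}$.

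The key step is to verify that $\Theta^+$ does not explicitly define $\Tr$, that is, that there is no $L^+$-formula $\delta(x)$ with $\Theta^+\proves\forall x\,(\Tr(x)\iff\delta(x))$. Writing such a putative $\delta$ as an $L_A$-formula $\delta(x,\vec y)$ with the constants $\vec c$ substituted, the parametrized form of the \Godel\ fixed-point lemma produces an $L_A$-formula $\sigma(\vec y)$ with $\PA\proves\sigma(\vec y)\iff\neg\delta(\ulcorner\sigma(\dot{\vec y})\urcorner,\vec y)$, where $\ulcorner\sigma(\dot{\vec y})\urcorner$ is the term computing the code of $\sigma$ with numerals for $\vec y$ inserted. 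Since $\sigma$ has standard logical complexity, a meta-induction on that complexity shows that the Tarskian clauses in $\Theta$ already prove the disquotation biconditional $\Tr(\ulcorner\sigma(\dot{\vec y})\urcorner)\iff\sigma(\vec y)$, uniformly in $\vec y$, even when $\vec y$ names nonstandard elements of $N_0$. Substituting $\vec c$ for $\vec y$ and chaining the two biconditionals with the assumed explicit definition yields $\Theta^+\proves\sigma(\vec c)\iff\neg\sigma(\vec c)$, a contradiction; hence no such $\delta$ exists. This parametrized Tarski step --- provable disquotation for standard-complexity formulas carrying nonstandard parameters --- is the part I expect to require the most care.

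Finally, by Beth's implicit definability theorem, since $\Tr$ is not explicitly definable over $\Theta^+$ it is not implicitly definable either, so there are models $\mathcal A,\mathcal B\satisfies\Theta^+$ with the same $L^+$-reduct but $\Tr^{\mathcal A}\neq\Tr^{\mathcal B}$. Their common $L_A$-reduct is a model $N\satisfies\PA$ with $N_0\elesub N$ (through the shared interpretation of the constants $c_a$), and $\Tr^{\mathcal A},\Tr^{\mathcal B}$ are two incompatible inductive truth predicates for $N$, as desired. To obtain ``several'' incompatible truth predicates rather than just two, one iterates the construction finitely often, each time taking the elementary extension just produced as the new base model; in the countable case a standard tree argument along these lines in fact yields continuum many.
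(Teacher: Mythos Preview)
Your argument is correct and follows essentially the same route as the paper: combine Beth's definability theorem with Tarski's undefinability theorem, applied to the elementary diagram of $N_0$ together with the assertion that $\Tr$ is an (inductive) truth predicate; your version is in fact a bit more careful than the paper's in spelling out the parametrized Tarski step needed to accommodate the constants naming elements of $N_0$. Two minor slips worth fixing: your $L^+$ as defined already contains $\Tr$, so the Beth conclusion should say that $\mathcal A$ and $\mathcal B$ agree on their $(L_A\cup\{c_a:a\in N_0\})$-reduct rather than their $L^+$-reduct; and the iteration you sketch at the end yields a tower of elementary extensions each carrying two truth predicates, not a single model carrying many---though the paper, too, proves only two directly and merely remarks parenthetically that infinitely many can be obtained by similar reasoning.
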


\begin{proof}
Let $\mathcal{N}_0=\<N_0,{+},{\cdot},0,1,{\lt}>$ be any model of arithmetic that admits a truth predicate (for example, the standard model $\N^M$ arising in any  model of set theory $M$). Let $T$ be the theory consisting of the elementary diagram $\Delta(\mathcal{N}_0)$ of this model, in the language of arithmetic with constants for every element of $N_0$, together with the assertion ``$\Tr$ is a truth predicate,'' which is expressible as a single assertion about $\Tr$, namely, the assertion that it satisfies the recursive Tarskian truth requirements. The theory $T$ is consistent, because by assumption, $\mathcal{N}_0$ itself admits a truth predicate. Furthermore, any model of the theory $T$ provides an elementary extension $\mathcal{N}$ of $\mathcal{N}_0$, when reduced to the language of arithmetic, together with a truth predicate for $\mathcal{N}$. Suppose toward contradiction that every elementary extension of $\mathcal{N}_0$ that admits a truth predicate has a unique such class. It follows that any two models of $T$ with the same reduction to the language of the diagram of $\mathcal{N}_0$ must have the same interpretation for the predicate $\Tr$. Thus, the predicate $\Tr$ is implicitly definable in $T$, in the sense of the Beth implicit definability theorem (see \cite[thm 2.2.22]{ChangKeisler1990:ModelTheory}), and so by that theorem, the predicate $\Tr$ must be explicitly definable in any model of $T$ by a formula in the base language, the language of arithmetic with constants for elements of $\mathcal{N}_0$. But this violates Tarski's theorem on the non-definability of truth, which implies that no model of arithmetic can have a definable truth predicate. Thus, there must be models of $T$ with identical reductions $\mathcal{N}$ to the language of the diagram of $\mathcal{N}_0$, but different truth predicates on $\mathcal{N}$. In other words, $\mathcal{N}$ is an elementary extension of $\mathcal{N}_0$ having at least two different incompatible truth predicates. (And it is not difficult by similar reasoning to see that there must be an $\mathcal{N}$ having infinitely many distinct truth predicates.)

In the case where $\mathcal{N}_0$ admits an inductive full satisfaction class, we simply add $\PA(\Tr)$ to the theory $T$, with the result by the same reasoning that the elementary extension $\mathcal{N}$ will also have multiple inductive full satisfaction classes, as desired.
\end{proof}

In fact, Krajewski \cite{Krajewski1976:Non-standardSatisfactionClasses} proves that we may find elementary extensions $\mathcal{N}$ having at least any desired cardinal $\kappa$ many such full satisfaction classes.

We observe next the circumstances under which these various satisfaction classes can become the true theory of arithmetic inside a model of set theory. First, let's note the circumstances under which a model of arithmetic is a \ZFC-standard model of arithmetic. Let $\Th(\N)^{\ZFC}$ be the set of sentences $\sigma$ in the language of arithmetic, such that $\ZFC\proves(\<\N,{+},{\cdot},0,1,{\lt}>\satisfies\sigma)$. These are the arithmetic consequences of \ZFC, the sentences that hold in every \ZFC-standard model of arithmetic. If \ZFC\ is consistent, then so is this theory, since it holds in the standard model of arithmetic of any model of \ZFC. More generally, for any set theory $T$ proving the existence of the standard model $\N$ and its theory $\TA$, we have the theory $\Th(\N)^T$, consisting of the sentences $\sigma$ in the language of arithmetic that $T$ proves to hold in the standard model. Thanks to Roman Kossak (recently) and Ali Enayat (from some time ago) for discussions concerning the following proposition. The result appears in \cite{Enayat2009:OnZFSTandardModelsOfPA} with further related analysis of the \ZFC-standard models of arithmetic.

\begin{proposition}[Enayat]\label{Proposition.ZFCStandard}
 The following are equivalent for any countable nonstandard model of arithmetic $\mathcal{N}$.
 \begin{enumerate}
  \item $\mathcal{N}$ is a $\ZFC$-standard model of arithmetic. That is, $\mathcal{N}=\N^M$ for some $M\satisfies\ZFC$.
  \item $\mathcal{N}$ is a computably saturated model of $\Th(\N)^\ZFC$.
 \end{enumerate}
\end{proposition}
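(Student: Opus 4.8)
The plan is to prove the two implications separately. The forward implication $(1)\Rightarrow(2)$ is a routine overspill argument carried out inside $M$, using the definability of Tarskian satisfaction for $\N^M$; the backward implication $(2)\Rightarrow(1)$ is the substantive half, where the plan is to set up an auxiliary theory in an expanded language and appeal to the resplendency of countable computably saturated models.

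For $(1)\Rightarrow(2)$: suppose $\mathcal{N}=\N^M$ for some $M\satisfies\ZFC$. That $\mathcal{N}\satisfies\Th(\N)^{\ZFC}$ is immediate, since $\ZFC$ proves of each of its arithmetic consequences that it holds in $\N$. For computable saturation, let $p(x,\vec a)$ be a computable type over $\mathcal{N}$, presented by a Turing machine index $e$ with parameters $\vec a\in N$, every finite subset of which is realized in $\mathcal{N}$. Working inside $M$, and using the $M$-definable Tarskian satisfaction predicate for $\N^M$, form the set $A$ of those $n$ for which $M$ believes the first $n$ formulas enumerated by $e$, with parameters $\vec a$, to be jointly satisfiable in $\N$. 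Since every standard finite subtype is genuinely realized in $\mathcal{N}$ and the satisfaction predicate of $M$ agrees with real satisfaction on standard-length formulas (by an external induction on the formula), the set $A$ contains every standard natural number; as $\mathcal{N}$ is nonstandard and $A$ is definable in $M$, overspill yields a nonstandard $\nu\in A$, and any $b\in N$ witnessing in $M$ that the first $\nu$ formulas are jointly satisfiable in $\N$ realizes $p$ in $\mathcal{N}$. The nonstandardness hypothesis is essential here: for an $\omega$-model $M$ the structure $\N^M$ is the genuine standard model, which satisfies $\Th(\N)^{\ZFC}$ but is not computably saturated.

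For $(2)\Rightarrow(1)$: assume $\mathcal{N}$ is countable, computably saturated, and $\mathcal{N}\satisfies\Th(\N)^{\ZFC}$; the goal is $M\satisfies\ZFC$ with $\N^M\iso\mathcal{N}$, after which one renames within $M$ for literal equality. First, $\ZFC+\{\,\N\satisfies\sigma\mid\sigma\in\Th(\mathcal{N})\,\}$ is consistent: for any finite $\Theta_0\of\Th(\mathcal{N})$ the sentence $\neg\bigwedge\Theta_0$ is false in $\mathcal{N}$, hence---as $\Th(\mathcal{N})\supseteq\Th(\N)^{\ZFC}$---outside $\Th(\N)^{\ZFC}$, so $\ZFC\not\proves(\N\satisfies\neg\bigwedge\Theta_0)$ and $\ZFC+(\N\satisfies\bigwedge\Theta_0)$ is consistent; now apply compactness. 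Adjoining constants $c_a$ ($a\in N$) and the elementary diagram of $\mathcal{N}$ relativized to $\N$, one gets $M_0\satisfies\ZFC$ with $\mathcal{N}\elesub\N^{M_0}$ (identifying $\mathcal{N}$ with its image); we may assume $\N^{M_0}$ properly extends $\mathcal{N}$, since otherwise $M_0$ already works, and then by downward \Lowenheim--Skolem we get $M'\elesub M_0$ containing $\N^{M_0}$ with $|M'|=|\N^{M'}|$, so $(M',{\in})$ is a model of $\ZFC$ equinumerous with its own set of natural numbers. Keeping the membership relation $E:={\in}^{M'}$ and transporting the arithmetic structure of $\N^{M'}$ along a bijection $F$ of $\dom(M')$ with $\omega^{M'}$ that fixes $N$ pointwise, we obtain on $\dom(M')$ an arithmetic structure $\mathcal{N}^*$ with $\mathcal{N}\elesub\mathcal{N}^*$ and an isomorphism $F\colon\mathcal{N}^*\to\N^{(\dom(M'),E)}$, while $(\dom(M'),E)=M'\satisfies\ZFC$. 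Hence the theory $T'$ in the arithmetic language expanded by binary relation symbols $E$ and $F$---asserting that $E$ codes a model of $\ZFC$ (a computable set of sentences, one per axiom) and that $F$ is an isomorphism of the ambient arithmetic structure onto the natural numbers of that model---is consistent with the elementary diagram of $\mathcal{N}$. Since $\mathcal{N}$ is countable and computably saturated it is resplendent, and resplendency accommodates recursively axiomatized expansions (see \cite{KossakSchmerl2006:TheStructureOfModelsOfPA}), so $\mathcal{N}$ itself expands to a model of $T'$: there are relations $E'$ and $F'$ on $N$ with $(N,E')\satisfies\ZFC$ and $F'$ an isomorphism of $\mathcal{N}$ onto $\N^{(N,E')}$. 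Then $M=(N,E')$ is as required.

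The step I expect to be the main obstacle is the consistency of the auxiliary theory $T'$, and within it the construction of a model of $\ZFC$ that simultaneously contains the given $\mathcal{N}$ elementarily in its standard model of arithmetic and is equinumerous with that standard model---the latter feature being exactly what allows the arithmetic structure to be spread over the whole domain and matched by $F$. The completeness of $\Th(\mathcal{N})$ and the hypothesis $\mathcal{N}\satisfies\Th(\N)^{\ZFC}$ together furnish the first feature, and the downward \Lowenheim--Skolem maneuver the second; once $T'$ is known consistent with the elementary diagram of $\mathcal{N}$, resplendency completes the argument with no further work. A secondary point, in the forward implication, is the verification that the satisfaction predicate computed inside $M$ agrees with real satisfaction on standard-length formulas, which is what legitimizes the overspill step.
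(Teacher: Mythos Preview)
Your argument is correct in both directions. The forward implication $(1)\Rightarrow(2)$ is essentially the paper's proof: both run overspill against the $M$-definable satisfaction predicate for $\N^M$ (the paper phrases this as ``$\mathcal{N}$ admits an inductive satisfaction class,'' but the content is the same).

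The backward implication $(2)\Rightarrow(1)$, however, takes a genuinely different route. The paper argues internally: using computable saturation it finds a nonstandard-finite code $t\in\mathcal{N}$ whose standard part is $\ZFC+\{\sigma^{\N}:\sigma\in\Th(\mathcal{N})\}$, builds the Henkin model $M$ of $t$ \emph{inside} $\mathcal{N}$, observes that $\mathcal{N}$ is then an initial segment of $\N^M$ so the two share a standard system, and finishes with the back-and-forth theorem for countable computably saturated models with the same theory and standard system. You instead work externally: you realize the auxiliary theory in an elementary extension via compactness and a \Lowenheim--Skolem cardinality adjustment, and then invoke resplendency of countable computably saturated models to pull the $\ZFC$-model and the isomorphism down onto $\mathcal{N}$ itself. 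Your approach is cleaner if one is willing to cite resplendency as a black box, and it avoids the standard-system bookkeeping; the paper's approach is more self-contained and in fact the authors remark in the introduction that experts ``seem instinctively to want to prove [such results] by means of resplendency,'' whereas they deliberately show that ``elementary methods suffice.'' So your proof is precisely the expert's instinctive route that the paper set out to bypass.
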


\begin{proof}
($1\to 2$) Suppose that $\mathcal{N}$ is a countable nonstandard \ZFC-standard model of arithmetic, arising as $\mathcal{N}=\N^M$ for some model of set theory $M\satisfies\ZFC$, which we may assume is countable. Clearly, $\mathcal{N}$ satisfies the theory $\Th(\N)^\ZFC$. Note also that $\mathcal{N}$ admits an inductive satisfaction class, namely, the collection of arithmetic truths $\TA^M$ as they are defined inside $M$. An easy overspill argument shows that any model of \PA\ with an inductive satisfaction class is computably saturated (e.g. \cite[lemma~1.1]{Smorynski1981:RecursivelySaturatedNonstandardModelsOfArithmetic}). Namely, suppose that $p(x,\vec n)$ is a computable type in the language of arithmetic with parameters $\vec n\in\mathcal{N}$ and that $p$ is finitely realized in $\mathcal{N}$. Since the type $p$ is computable, the model $\mathcal{N}$ computes its own version of the type $p^{\mathcal{N}}$, and this will agree with $p$ on all standard formulas. Furthermore, the structure $\<\mathcal{N},\TA^M>$ can see that all the standard-finite initial segments of $p^{\mathcal{N}}$ are satisfiable according to the truth predicate $\TA^M$, which by induction in the meta-theory agrees with actual truth on the standard-finite length formulas. Thus, by overspill (since we have induction for this predicate), it follows that some nonstandard length initial segment of $p^{\mathcal{N}}$ is satisfied in $\mathcal{N}$ according to the truth predicate, and since that truth predicate agrees with actual satisfaction on standard-finite formulas, it follows in particular that $p$ itself is satisfied in $\mathcal{N}$. So $\mathcal{N}$ is a countable computably saturated model of $\Th(\N)^\ZFC$, as desired.

($2\to 1$) Conversely, suppose that $\mathcal{N}$ is a countable computably saturated model of $\Th(\N)^\ZFC$. Consider the theory $T$ consisting of the \ZFC\ axioms together with the assertions $\sigma^\N$ for every sentence $\sigma\in\Th(\mathcal{N})$, which is consistent precisely because $\mathcal{N}\satisfies\Th(\N)^\ZFC$. Indeed, $\mathcal{N}$ agrees on the consistency of any particular finite subtheory of $T$, because for any finite fragment $\varphi$ of \ZFC\ and any arithmetic sentence $\sigma$, we have $\ZFC\proves \sigma^\N\implies \Con(\varphi+\sigma^\N)$ by the reflection theorem, and so this statement is true in $\mathcal{N}$, along with its antecedent $\sigma$, and so $\varphi+\sigma^\N$ is consistent in $\mathcal{N}$. 

By computable saturation, the theory $T$ is coded in $\mathcal{N}$, since we may write down a computable type for this, and so there is a nonstandard finite theory $t\in\mathcal{N}$ whose standard part is exactly $T$. 

Since as we have mentioned $\mathcal{N}$ thinks the finite subtheories of $T$ are consistent, by overspill we may assume by cutting down to a nonstandard initial segment if necessary that $t$ is consistent in $\mathcal{N}$. Inside $\mathcal{N}$, we can now build the canonical complete consistent Henkin theory $H$ extending $t$, and let $M\satisfies H$ be the corresponding Henkin model. In particular, $M\satisfies\ZFC$, since this is a part of $t$, and so $\N^M$ is a \ZFC-standard model of arithmetic and hence computably saturated. Note also that $\N^M$ has the same theory as $\mathcal{N}$, because this is part of $t$ and hence $H$. The structure $\mathcal{N}$ can construct an isomorphism from itself with an initial segment of $\N^M$, because for every $a\in\mathcal{N}$ it has a Henkin constant $\check a$ witnessing $\check a=\overbrace{1+\cdots+1}^a$ and it must be part of the theory that any $x<\check a$ is some $\check b$ for some $b<a$ in $\mathcal{N}$, since otherwise $\mathcal{N}$ would think $H$ is inconsistent. It follows that $\mathcal{N}$ and $\N^M$ have the same standard system. But any two countable computably saturated models of arithmetic with the same standard system are isomorphic, by the usual back-and-forth argument, and so $\mathcal{N}\cong\N^M$, showing that $\mathcal{N}$ is \ZFC-standard, as desired.\end{proof}

Thus, the countable nonstandard \ZFC-standard models of arithmetic are precisely the countable computably saturated models of $\Th(\N)^\ZFC$. One may similarly show that every uncountable model of $\Th(\N)^\ZFC$ has an elementary extension to a \ZFC-standard model of arithmetic, since the theory $\ZFC+\set{\sigma^\N\mid\sigma\in\Delta(\mathcal{N})}$ is finitely consistent, where $\Delta(\mathcal{N})$ refers to the elementary diagram of $\mathcal{N}$ in the language with constants for every element of $\mathcal{N}$, and any model $M$ of this theory will have $\N^M$ as an elementary extension of $\mathcal{N}$, as desired. The equivalence stated in proposition \ref{Proposition.ZFCStandard} does not generalize to uncountable models, for there are uncountable computably saturated models of $\Th(\N)^\ZFC$ that are $\omega_1$-like and rather classless, which means in particular that they admit no inductive truth predicates and therefore are not \ZFC-standard.

We shall now extend proposition \ref{Proposition.ZFCStandard} to the case where the model carries a truth predicate. Let $\Th(\N,\TA)^\ZFC$ be the theory consisting of all sentences $\sigma$ for which $\ZFC\proves(\<\N,{+},{\cdot},0,1,{\lt},\TA>\satisfies\sigma)$, where $\sigma$ is in the language of arithmetic augmented with a truth predicate and \TA\ refers to the \ZFC-definable set of true arithmetic assertions. If \ZFC\ is consistent, then so is $\Th(\N,\TA)^\ZFC$, since it holds in the standard model of arithmetic, with the standard interpretation of \TA, arising inside any model of \ZFC. Note that $\Th(\N,\TA)^\ZFC$ includes the assertion that $\TA$ is a truth predicate, as well as the induction scheme in the language with this predicate.

\begin{proposition}\label{Proposition.ZFCStandardTA}\
 The following are equivalent for any countable nonstandard model of arithmetic $\mathcal{N}$ with a truth predicate $\Tr$.
 \begin{enumerate}
  \item $\<\mathcal{N},\Tr>$ is a $\ZFC$-standard model of arithmetic and arithmetic truth. That is, $\mathcal{N}=\N^M=\<\N,{+},{\cdot},0,1,{\lt}>^M$ for some $M\satisfies\ZFC$ in which $\Tr=\TA^M$ is the theory of true arithmetic.
  \item $\<\mathcal{N},\Tr>$ is a computably saturated model of $\Th(\N,\TA)^\ZFC$.
 \end{enumerate}
\end{proposition}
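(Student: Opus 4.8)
The plan is to run the proof of Proposition~\ref{Proposition.ZFCStandard} once more, carrying the truth predicate along at every stage. For the direction $(1\to 2)$, suppose $\<\mathcal{N},\Tr>=\<\N^M,\TA^M>$ for some countable $M\satisfies\ZFC$. That $\<\mathcal{N},\Tr>\satisfies\Th(\N,\TA)^\ZFC$ is immediate, since $M$ verifies each such sentence in the set structure $\<\N,{+},{\cdot},0,1,{\lt},\TA>^M$, which is literally $\<\mathcal{N},\Tr>$, and $M$'s internal satisfaction relation is correct on the standard-length sentences at issue. For computable saturation I would observe that $M$, being a model of \ZFC, can form $\Tr_2$, the set of sentences in the language of arithmetic augmented by a predicate symbol for $\Tr$ that $M$ computes to be true in the structure $\<\mathcal{N},\Tr>$; since \ZFC\ proves that $\N$ satisfies full second-order induction, $\<\mathcal{N},\Tr,\Tr_2>$ satisfies $\PA$ in this doubly-augmented language, so $\Tr_2$ is an inductive full satisfaction class for the model $\<\mathcal{N},\Tr>$. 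The argument of Proposition~\ref{Proposition.ZFCStandard} then applies verbatim: any model of $\PA(\Tr)$ carrying an inductive full satisfaction class is computably saturated, because a computable type over $\<\mathcal{N},\Tr>$ is computed internally, its standard-finite initial segments are seen by $\Tr_2$ to be satisfiable, and overspill---using the induction available for $\Tr_2$---produces a nonstandard initial segment that is satisfied, so the type itself is realized.

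For the direction $(2\to 1)$, suppose $\<\mathcal{N},\Tr>$ is a countable computably saturated model of $\Th(\N,\TA)^\ZFC$. As in Proposition~\ref{Proposition.ZFCStandard}, let $T$ be the theory consisting of the \ZFC\ axioms together with the assertions $\sigma^{\N,\TA}$, expressing that $\<\N,{+},{\cdot},0,1,{\lt},\TA>\satisfies\sigma$, for every sentence $\sigma$ of the language of arithmetic with a truth predicate that is true in $\<\mathcal{N},\Tr>$. This theory is finitely consistent: a finite inconsistency would place the negation of a conjunction of such sentences $\sigma$ into $\Th(\N,\TA)^\ZFC$, contradicting $\<\mathcal{N},\Tr>\satisfies\Th(\N,\TA)^\ZFC$. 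Moreover, since \ZFC\ proves the consistency of each of its own finite fragments, $\Th(\N)^\ZFC\of\Th(\N,\TA)^\ZFC$ proves the consistency of each standard finite fragment of $T$, so $\mathcal{N}$ believes these fragments consistent. Realizing an appropriate computable type in the language with $\Tr$, exactly as in Proposition~\ref{Proposition.ZFCStandard}, yields a nonstandard finite theory $t\in\mathcal{N}$, believed consistent by $\mathcal{N}$, whose standard part is $T$. Inside $\mathcal{N}$ I would build the Henkin model $M\satisfies t$; then $M\satisfies\ZFC$, so $\<\N^M,\TA^M>$ is a \ZFC-standard model of arithmetic and arithmetic truth, it is countable (being built inside the countable $\mathcal{N}$) and hence computably saturated by the direction $(1\to 2)$ already proved, and $\<\N^M,\TA^M>\eleequiv\<\mathcal{N},\Tr>$ in the language with $\Tr$, since $T$ pinned the theory down.

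It remains to produce an isomorphism $\<\mathcal{N},\Tr>\iso\<\N^M,\TA^M>$. Via its Henkin numeral constants, $\mathcal{N}$ embeds onto an initial segment of $\N^M$, so $\SSy(\mathcal{N})\of\SSy(\N^M)$; conversely $\<\N^M,\TA^M>$ is definable with parameters inside $\mathcal{N}$, so its standard system is contained in that of $\mathcal{N}$; and because $\<\mathcal{N},\Tr>$ is recursively saturated, every set coded by a $\Tr$-formula over $\<\mathcal{N},\Tr>$ is already coded in $\mathcal{N}$. Hence $\SSy(\<\mathcal{N},\Tr>)=\SSy(\mathcal{N})=\SSy(\N^M)=\SSy(\<\N^M,\TA^M>)$. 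I would then invoke the standard fact that two countable recursively saturated structures in a common recursive language extending the language of arithmetic, which are elementarily equivalent and have the same standard system, are isomorphic; this gives $\<\mathcal{N},\Tr>\iso\<\N^M,\TA^M>$, so $\<\mathcal{N},\Tr>$ is \ZFC-standard, as desired. I expect the delicate point of the whole argument to be exactly this last step---ensuring the truth predicate is carried correctly through the back-and-forth, which requires the generalization of the ``same standard system plus elementary equivalence implies isomorphic'' theorem from models of \PA\ to recursively saturated models in a richer recursive language, together with the observation that recursive saturation leaves the standard system unaffected by the presence of $\Tr$. A secondary point needing care, inherited from Proposition~\ref{Proposition.ZFCStandard}, is the coding by which the non-recursive theory $T$ is captured by a single $t\in\mathcal{N}$ through a genuinely computable type, now formulated in the language with $\Tr$.
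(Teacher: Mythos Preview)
Your proposal is correct and follows essentially the same approach as the paper, which simply says that the proof of Proposition~\ref{Proposition.ZFCStandard} adapts to the expanded structure: for $(1\to2)$ one uses that $M$ supplies an inductive truth predicate for $\<\mathcal{N},\Tr>$ in the expanded language, and for $(2\to1)$ one builds the Henkin model of $\ZFC+\set{\sigma^{\<\N,\TA>}\st\<\mathcal{N},\Tr>\satisfies\sigma}$ inside $\mathcal{N}$, observes that $\<\mathcal{N},\Tr>$ sits as an initial segment of $\<\N^M,\TA^M>$ so the standard systems agree, and concludes by back-and-forth. You have made explicit exactly the points the paper leaves implicit---that the standard system is unchanged by adjoining $\Tr$, and that the back-and-forth isomorphism theorem extends to countable recursively saturated elementarily equivalent models in the richer recursive language---and you have correctly identified these as the places requiring care.
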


\begin{proof}
The proof of proposition \ref{Proposition.ZFCStandard} adapts to accommodate the expanded structure. If a countable nonstandard model $\<\mathcal{N},\Tr>$ arises as $\<\N,\TA>^M$ for some $M\satisfies\ZFC$, then it admits an inductive truth predicate in the expanded language, and this implies that it is computably saturated just as above. Conversely, any countably computably saturated model of $\Th(\N,\TA)^\ZFC$ can build as before a model of the corresponding Henkin theory extending $\ZFC+\set{\sigma^{\<\N,\TA>}\mid\<\mathcal{N},\Tr>\satisfies\sigma}$. The corresponding Henkin model $M$ will have $\<\mathcal{N},\Tr>$ as an initial segment of $\<\N^M,\TA>$, and so these two models have the same standard system, and since they also are elementarily equivalent and computably saturated, they are isomorphic by the back-and-forth construction. So $\<\mathcal{N},\Tr>$ is \ZFC-standard.
\end{proof}

\noindent Let us now finally prove theorem \ref{Theorem.SameNDifferentTruths}. We shall give two proofs, one as a corollary to proposition \ref{Proposition.ZFCStandardTA}, and another simpler direct proof.

\begin{proof}[Proof of theorem \ref{Theorem.SameNDifferentTruths}] (as corollary to proposition \ref{Proposition.ZFCStandardTA}) By Beth's theorem as in the proof of theorem \ref{Theorem.Krajewski}, we may find two different truth predicates on the same model of arithmetic, with both $\<\mathcal{N},\Tr_1>$ and $\<\mathcal{N},\Tr_2>$ being computably saturated models of $\Th(\N,\TA)^\ZFC$. It follows by proposition \ref{Proposition.ZFCStandardTA} that they arise as the standard model inside two different models of set theory, with $\mathcal{N}=\N^{M_1}=\N^{M_2}$ and $\Tr_1=\TA^{M_1}$ and $\Tr_2=\TA^{M_2}$, establishing theorem \ref{Theorem.SameNDifferentTruths}.
\end{proof}

We also give a simpler direct proof, as follows. (Thanks to W. Hugh Woodin for pointing out a further simplification in this argument.)

\begin{proof}[Proof of theorem \ref{Theorem.SameNDifferentTruths}] (direct argument) Suppose that $M_1$ is any countable $\omega$-nonstandard model of set theory. It follows that $M_1$'s version of the standard model of arithmetic $\<\N,{+},{\cdot},0,1,{\lt},\TA>^{M_1}$, augmented with what $M_1$ thinks is the true theory of arithmetic, is countable and computably saturated. Since $\TA^{M_1}$ is not definable in the reduced structure $\N^{M_1}=\<\N,{+},{\cdot},0,1,{\lt}>^{M_1}$, it follows by saturation that there is an arithmetic sentence $\sigma\in\TA^{M_1}$ realizing the same $1$-type in $\N^{M_1}$ as another sentence $\tau$ in $\N^{M_1}$ with $\tau\notin\TA^{M_1}$; that is, $\N^{M_1}\satisfies\varphi(\sigma)\iff\varphi(\tau)$ for every formula $\varphi$ in the language of arithmetic. To see this, consider the type $p(s,t)$ containing all formulas $\varphi(s)\iff\varphi(t)$ for $\varphi$ in the language of arithmetic, plus the assertions $s\in\TA$ and $t\notin\TA$. This is a computable type, and it is finitely realized in $\<\N,{+},{\cdot},0,1,{\lt},\TA>^{M_1}$, precisely because otherwise we would be able to define $\TA^{M_1}$ in $\N^{M_1}$. Thus, there are arithmetic sentences $\sigma$ and $\tau$ in $M_1$, whose \Godel\ codes realize the same $1$-type in $\N^{M_1}$, but such that $M_1$ thinks that $\sigma$ is true and $\tau$ is false in $\N^{M_1}$. Since these objects have the same $1$-type in $\N^{M_1}$, it follows by the back-and-forth construction that there is an automorphism $\pi:\N^{M_1}\to\N^{M_1}$ with $\pi(\tau)=\sigma$. 

Since $M_1$ sits over $\N^{M_1}$ in the way that it does and $\N^{M_1}$ is isomorphic to its image under $\pi$, we may build a copy $M_2$ of $M_1$ in such a way that extends the isomorphism $\pi$. $$\begin{tikzpicture}[scale=.5]
% M_1
\draw[fill=Yellow!40] (0,1) ellipse (2 and 4) node[above=1.3cm] {$M_1$};
\draw[fill=Gold] (0,0) ellipse (1 and 2) node {$\N^{M_1}$};
% M_2
\draw[densely dotted] (10,1) ellipse (2 and 4) node[above=1.3cm] {$M_2$};
\draw[fill=Gold] (10,0) ellipse (1 and 2) node {$\N^{M_1}$};
% the arrows
\draw[-Stealth] ($(0,0)+(30:1 and 2)$) to[out=20,in=160] node[below] {$\pi$} ($(10,0)+(150:1 and 2)$);
\draw[-Stealth,dotted] ($(0,1)+(55:2 and 4)$) to[out=20,in=160] node[above] {$\pi^*$} ($(10,1)+(125:2 and 4)$);
\end{tikzpicture}$$
That we may do so is an instance of the trivial general observation about structures that whenever you have made an isomorphic copy of part of a structure, then you may extend this to an isomorphic copy of the whole structure. In our case, we had an isomorphism $\pi$ of $\N^{M_1}$ with its copy (which happened to be $\N^{M_1}$ itself), and so we may extend this to an isomorphism $\pi^*$ of $M_1$ with a suitable structure $M_2$ extending $\N^{M_1}$. So we have a model $M_2$ which is a copy of $M_1$, witnessed by the isomorphism $\pi^*:M_1\to M_2$ extending $\pi$, in such a way that every element $m\in\N^{M_1}$ sits inside $M_1$ the same way that $\pi(m)$ sits inside~$M_2$.

Since $\pi$ was an automorphism of $\N^{M_1}$, the situation is therefore that $M_1$ and $M_2$ have exactly the same natural numbers $\N^{M_1}=\N^{M_2}$. Yet, $M_1$ thinks that the arithmetic sentence $\sigma$ is true in the natural numbers, while $M_2$ thinks $\sigma$ is false, because $M_1$ thinks $\tau$ is false, and so $M_2$ thinks $\pi(\tau)$ is false, but $\pi(\tau)=\sigma$.
\end{proof}

\section{Satisfacton is not absolute}\label{Section.SatisfactionIsNotAbsolute}

In this section we aim to show that the non-absoluteness phenomenon is pervasive. For any sufficiently rich structure $\mathcal{N}$ in any countable model of set theory $M$, there are elementary extensions $M_1$ and $M_2$, which have the structure $\mathcal{N}^{M_1}=\mathcal{N}^{M_2}$ in common, yet disagree about the satisfaction relation for this structure, in that $M_1$ thinks $\mathcal{N}\satisfies\sigma[\vec a]$ for some formula $\sigma$ and parameters $\vec a$, while $M_2$ thinks that $\mathcal{N}\satisfies\neg\sigma[\vec a]$ (see corollary \ref{Corollary.SameModelNDifferentTruths}). Even more generally, theorem \ref{Theorem.SameModelNDifferentS} shows that for any non-definable class $S$ in any structure $\mathcal{N}$ in any countable model of set theory $M$, there are elementary extensions $M_1$ and $M_2$ of $M$, which have the structure $\mathcal{N}^{M_1}=\mathcal{N}^{M_2}$ in common, yet disagree on the interpretation of the class $S^{M_1}\neq S^{M_2}$. We provide several curious instances as corollary applications.

\begin{theorem}\label{Theorem.SameModelNDifferentS}
Suppose that $M$ is a countable model of set theory, $\mathcal{N}$ is structure in $M$ in a finite language and $S\of\mathcal{N}$ is an additional predicate, $S\in M$, that is not definable in $\mathcal{N}$ with parameters. Then there are elementary extensions $M\elesub M_1$ and $M\elesub M_2$, which agree on the structure $\mathcal{N}^{M_1}=\mathcal{N}^{M_2}$, having all the same elements of it, the same language and the same interpretations for the functions, relations and constants in this language, yet disagree on the extension of the additional predicate $S^{M_1}\neq S^{M_2}$, even though by elementarity these predicates have all the properties in $M_1$ and $M_2$, respectively, that $S$ has in $M$.
\end{theorem}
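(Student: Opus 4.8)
The plan is to run the argument of the direct proof of theorem \ref{Theorem.SameNDifferentTruths}, but with the undefinable predicate $S$ playing the role that the truth predicate $\TA$ plays there, and with extra care taken so that the model built at the end is again an elementary extension of the given $M$. Since any elementary extension of an elementary extension of $M$ is an elementary extension of $M$, it costs nothing to replace $M$ at the outset by a countable recursively saturated elementary extension of itself; the hypothesis ``$S$ is not definable in $\mathcal{N}$ with parameters'' is first-order expressible over $M$ (it quantifies over \Godel\ codes of formulas and over parameters, using the $M$-definable satisfaction relation for the set $\mathcal{N}$), so it persists, as does $M$'s belief that $\mathcal{N}$ is infinite. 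Using that belief, I would then pass to a countable recursively saturated elementary extension $M\elesub M_1$ for which, moreover, $\mathcal{N}^{M}\elesub\mathcal{N}^{M_1}$ is a \emph{proper} elementary extension (first realize a single new element of $\mathcal{N}$ over $M$ by a $1$-type argument, then absorb this into a recursively saturated extension).

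Next I would check that both $\mathcal{N}^{M_1}$ and the expanded structure $\langle\mathcal{N}^{M_1},S^{M_1}\rangle$ are countable and recursively saturated, by a transfer argument of exactly the kind used in the proof of proposition \ref{Proposition.ZFCStandard}: given a recursive type over $\langle\mathcal{N}^{M_1},S^{M_1}\rangle$ with finitely many parameters, finitely realized, one codes it inside $M_1$ as a recursive type over $M_1$ by applying the ($M_1$-definable) satisfaction relation for the set $\langle\mathcal{N}^{M_1},S^{M_1}\rangle$ to the standard \Godel\ numbers of the formulas appearing; since $M_1$'s satisfaction relation agrees with actual satisfaction on standard-length formulas, this type over $M_1$ is finitely realized there, so recursive saturation of $M_1$ produces a realization, which lies in $\mathcal{N}^{M_1}$ and realizes the original type. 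Transferring the undefinability hypothesis up to $M_1$ likewise shows that $S^{M_1}$ is not definable in $\mathcal{N}^{M_1}$ from parameters in $\mathcal{N}^{M_1}$---not even by a formula of standard length, since any such definition would be recognized inside $M_1$---and one checks directly that $\langle\mathcal{N}^{M},S^{M}\rangle\elesub\langle\mathcal{N}^{M_1},S^{M_1}\rangle$.

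The crux of the proof, and the step I expect to require the real work, is to produce an automorphism $\pi$ of $\mathcal{N}^{M_1}$ that fixes $\mathcal{N}^{M}$ pointwise and does not fix $S^{M_1}$ setwise. I would obtain this from the homogeneity and resplendency theory of countable recursively saturated structures---the circle of ideas behind the folklore results cited in the introduction (Schlipf; Kossak--Kotlarski; Schmerl's theorem that an undefinable class of a countable recursively saturated model has continuum many automorphic images)---relativized to the recursively saturated elementary submodel $\mathcal{N}^{M}$. Concretely, the undefinability of $S^{M_1}$, together with the properness of $\mathcal{N}^{M}\elesub\mathcal{N}^{M_1}$, should yield elements $a\in S^{M_1}$ and $b\notin S^{M_1}$ realizing the same type over $\mathcal{N}^{M}$ in $\mathcal{N}^{M_1}$ (were there no such pair, $S^{M_1}$ would be invariant under the pointwise stabilizer $\Aut(\mathcal{N}^{M_1}/\mathcal{N}^{M})$, which the relativized automorphism theory forbids for an undefinable $M_1$-coded set); then the strong homogeneity of $\mathcal{N}^{M_1}$ over $\mathcal{N}^{M}$ furnishes $\pi\in\Aut(\mathcal{N}^{M_1})$ with $\pi\restrict\mathcal{N}^{M}=\id$ and $\pi(b)=a$, so that $a\in S^{M_1}$ while $a\notin\pi[S^{M_1}]$, whence $\pi[S^{M_1}]\neq S^{M_1}$. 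As the introduction notes, one can alternatively invoke resplendency of $\langle\mathcal{N}^{M_1},S^{M_1}\rangle$ directly.

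Finally I would transport $M_1$ across $\pi$. Extend $\pi$ to a bijection $\pi^{*}$ of the underlying set of $M_1$ by setting $\pi^{*}=\pi$ on $\mathcal{N}^{M_1}$, $\pi^{*}=\id$ on $M$---these agree on the overlap $\mathcal{N}^{M}=M\cap\mathcal{N}^{M_1}$ precisely because $\pi$ fixes $\mathcal{N}^{M}$---and $\pi^{*}=\id$ elsewhere, and let $M_2$ be the model obtained by transporting the membership relation of $M_1$ along $\pi^{*}$, so that $\pi^{*}\colon M_1\iso M_2$. Since $\pi^{*}$ is the identity on $M$ and $M\elesub M_1$, we get $M\elesub M_2$. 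Since $\pi$ is an automorphism of the structure $\mathcal{N}^{M_1}$, the interpretations of the functions, relations and constants are unchanged under $\pi^{*}$, so $\mathcal{N}^{M_2}=\mathcal{N}^{M_1}$ with literally the same domain, language and interpretations; and $S^{M_2}=\pi^{*}[S^{M_1}]=\pi[S^{M_1}]\neq S^{M_1}$. Thus $M_1$ and $M_2$ are elementary extensions of $M$ agreeing on the structure $\mathcal{N}$ but disagreeing on the predicate $S$, and by elementarity $S^{M_i}$ has in $M_i$ every property that $S$ has in $M$, as required.
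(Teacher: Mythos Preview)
Your overall strategy is the same as the paper's: pass to a countable computably saturated elementary extension $M_1$ of $M$, find $s\in S^{M_1}$ and $t\notin S^{M_1}$ realizing the same type over $\mathcal{N}^M$ in $\mathcal{N}^{M_1}$, use homogeneity to get an automorphism $\pi$ of $\mathcal{N}^{M_1}$ fixing $\mathcal{N}^M$ pointwise with $\pi(t)=s$, and transport $M_1$ along an isomorphism $\pi^*$ extending $\pi$ and fixing $M$. The final transport step you describe is exactly what the paper does.

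Where your execution diverges from the paper's is at the key step, and there the paper is both simpler and more solid. The paper takes $M_1$ computably saturated \emph{in the language with a computable list of constants for every element of $M$}. This single move makes the structure $\langle\mathcal{N},S,m\rangle_{m\in\mathcal{N}^M}^{M_1}$ itself countable and computably saturated, and one then directly realizes the computable type $p(s,t)=\{\varphi(s)\leftrightarrow\varphi(t):\varphi\}\cup\{S(s),\neg S(t)\}$, where $\varphi$ ranges over formulas with those constants; finite realizability is immediate from the non-definability of $S$ with parameters. No appeal to any external ``relativized automorphism theory'' is needed. Your route instead argues by contraposition that failure to find such $s,t$ would make $S^{M_1}$ invariant under $\Aut(\mathcal{N}^{M_1}/\mathcal{N}^M)$, and then asserts that this is ``forbidden for an undefinable $M_1$-coded set.'' That invariance-implies-definability claim is not a standard fact you can simply cite in this generality (invariant classes of countable recursively saturated structures need not be definable---think of the standard cut), and unwinding any proof of it here would amount to the paper's type-realization argument anyway. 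There is also a related soft spot in your setup: you check recursive saturation only of $\langle\mathcal{N}^{M_1},S^{M_1}\rangle$, but both the existence of $s,t$ with the same type \emph{over all of $\mathcal{N}^M$} and the back-and-forth for $\pi$ fixing $\mathcal{N}^M$ require homogeneity of $\langle\mathcal{N}^{M_1},m\rangle_{m\in\mathcal{N}^M}$, i.e., saturation in the language with those infinitely many constants, which saturation of $M_1$ in $\{\in\}$ alone does not directly give. The fix is exactly the paper's device: put the constants for $M$ into the language from the outset. With that in place, your preliminary replacement of $M$ by a recursively saturated model and your separate argument for properness of $\mathcal{N}^M\prec\mathcal{N}^{M_1}$ become unnecessary.
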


\begin{proof}Fix such a structure $\mathcal{N}$ and non-definable class $S$ inside a countable model of set theory $M$. Let $M_1$ be any countable computably saturated elementary extension of $M$, in the language with constants for every element of $M$. In particular, the structure $\<\mathcal{N},S,m>_{m\in \mathcal{N}^M}^{M_1}$ is countable and computably saturated (and this is all we actually require). Since $S$ is not definable from parameters, there are objects $s$ and $t$ in $\mathcal{N}^{M_1}$ with the same $1$-type in $\<\mathcal{N},m>_{m\in\mathcal{N}^M}^{M_1}$, yet $s\in S^{M_1}$ and $t\notin S^{M_1}$. Since this latter structure is countable and computably saturated, it follows that there is an automorphism $\pi:\mathcal{N}\to\mathcal{N}$ with $\pi(t)=s$ and $\pi(m)=m$ for every $m\in \mathcal{N}^M$. That is, $\pi$ is an automorphism of $\mathcal{N}^{M_1}$ mapping $t$ to $s$, and respecting the copy of $\mathcal{N}^M$ inside $\mathcal{N}^{M_1}$. Let $M_2$ be a copy of $M_1$ containing $M$, witnessed by an isomorphism $\pi^*:M_1\to M_2$ extending $\pi$ and fixing the elements of $M$. It follows that $\mathcal{N}^{M_1}=\mathcal{N}^{M_2}$ and $M_1$ thinks $s\in S$, but since $M_1$ thinks $t\notin S$, it follows that $M_2$ thinks $\pi(t)\notin S$ and so $M_2$ thinks $s\notin S$, as desired.
\end{proof}

\begin{corollary}\label{Corollary.SameModelNDifferentTruths}
 Suppose that $M$ is a countable model of set theory, and that $\mathcal{N}$ is a sufficiently robust structure in $M$, in a finite language. Then there are elementary extensions $M\elesub M_1$ and $M\elesub M_2$, which agree on the natural numbers $\N^{M_1}=\N^{M_2}$ and on the structure $\mathcal{N}^{M_1}=\mathcal{N}^{M_2}$, having all the same elements of it, the same language and the same interpretations for the functions, relations and constants in this language, yet they disagree on what they each think is the standard satisfaction relation $\mathcal{N}\satisfies\sigma[\vec a]$ for this structure.
$$\begin{tikzpicture}[scale=.8]
 \node at (0,2) (N) {$\bullet$};
 \node[anchor=south] at (N) {\tiny$\mathcal{N}$};
 \draw (0,0) --(-.5,2) to [out=110, in=-35] (-2,3.5) --(-.2,3.5) to [out=-45,in=75] (.5,2) --(.425,1.7) to [out=100,in=0] (0,2) to [out=180,in=80] (-.425,1.7) --(-.5,2) to [out=105,in=-135] (.2,3.5) --(2,3.5) to [out=-145,in=70] (.5,2) --(0,0);
 \node[anchor=south] at (-1.2,3.5) {$M_1$};
 \node[anchor=south] at (1.2,3.5) {$M_2$};
\end{tikzpicture}
\qquad
\vbox{\hbox{$M\prec \ M_1,M_2\satisfies\ZFC$}\medskip\hbox{$\N^{M_1}=\N^{M_2},\qquad\mathcal{N}^{M_1}=\mathcal{N}^{M_2}$}\medskip\hbox{there are $\sigma$ and $\vec a$ for which}\medskip\hbox{$M_1$ believes $\mathcal{N}\satisfies\sigma[\vec a]$}\medskip\hbox{$M_2$ believes $\mathcal{N}\satisfies\neg\sigma[\vec a]$}}
$$
\end{corollary}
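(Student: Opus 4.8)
The plan is to derive this corollary from Theorem \ref{Theorem.SameModelNDifferentS} by taking for the non-definable predicate $S$ the satisfaction relation of $\mathcal{N}$ itself, suitably coded. To pin down ``sufficiently robust,'' I shall require of $\mathcal{N}$ that, as seen inside $M$, it admits a definable pairing function and definable \Godel-coding of its own formulas and of finite sequences of its own elements, so that the satisfaction relation $\Sat_\mathcal{N}=\set{\<\sigma,\vec a>\st\mathcal{N}\satisfies\sigma[\vec a]}$ --- which is definable in $M$ by the Tarski recursion, since $\mathcal{N}=\mathcal{N}^M\in M$ and $M\satisfies\ZFC$ --- may be regarded as a genuine subclass $S\of\mathcal{N}$ with $S\in M$; that $\mathcal{N}$ supports the \Godel\ fixed-point lemma; and that $\mathcal{N}$ has a parameter-free definable copy of $\N$. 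All of this holds for the structures of interest, such as $\mathcal{N}=\<H_{\omega_2},{\in}>$, $\mathcal{N}=\<V_\delta,{\in}>$, or $\mathcal{N}=\<\N,{+},{\cdot},0,1,{\lt}>$ itself.

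The essential step is to check that $S=\Sat_\mathcal{N}$ satisfies the hypothesis of Theorem \ref{Theorem.SameModelNDifferentS}, namely that it is not definable in $\mathcal{N}$ with parameters. This is Tarski's theorem on the non-definability of truth, in the form already used in this article: were $S$ definable in $\mathcal{N}$ by some $\varphi(x,p)$ with a parameter $p\in\mathcal{N}$ (several parameters being foldable into one), the \Godel\ fixed-point lemma would produce a sentence $\rho$ with $\mathcal{N}\satisfies\rho\iff\mathcal{N}\satisfies\neg\varphi(\rho,p)$, i.e.\ $\mathcal{N}\satisfies\rho\iff\rho\notin S$, whereas $S$, being the genuine satisfaction relation, also has $\rho\in S\iff\mathcal{N}\satisfies\rho$ --- a contradiction. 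Since this argument is carried out inside $M$ about the set $\mathcal{N}^M\in M$, and since whether a given formula-with-parameters defines $S$ in $\mathcal{N}$ is decided the same way in $M$ as in $V$, it really is the case that $S$ is not definable in $\mathcal{N}$ with parameters, so Theorem \ref{Theorem.SameModelNDifferentS} applies.

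That theorem then furnishes countable elementary extensions $M\elesub M_1$ and $M\elesub M_2$ with the same structure $\mathcal{N}^{M_1}=\mathcal{N}^{M_2}$ but with $S^{M_1}\neq S^{M_2}$; as elementary extensions of $M$ they are models of \ZFC. By elementarity $S^{M_i}$ retains inside $M_i$ every first-order property that $S$ has in $M$ --- in particular that it obeys the Tarskian recursion clauses for a satisfaction class of $\mathcal{N}^{M_i}$ --- and since \ZFC\ proves the satisfaction relation of a set structure is unique, each $M_i$ regards $S^{M_i}$ as \emph{the} standard satisfaction relation of the common structure. Hence $S^{M_1}\neq S^{M_2}$ unpacks, through the fixed coding, into a formula $\sigma$ and parameters $\vec a$ over $\mathcal{N}^{M_1}$ with $M_1$ believing $\mathcal{N}\satisfies\sigma[\vec a]$ and $M_2$ believing $\mathcal{N}\satisfies\neg\sigma[\vec a]$. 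Finally, since $\N$ is parameter-free definable in $\mathcal{N}$, agreement on $\mathcal{N}^{M_1}=\mathcal{N}^{M_2}$ also forces $\N^{M_1}=\N^{M_2}$. The only point that needs care is the bookkeeping of the coding in the first step: one must fix $\mathcal{N}$-definable pairing and formula-coding maps, verify that the resulting $S\of\mathcal{N}$ is definable in $M$, and confirm that a disagreement about membership in $S$ genuinely witnesses a disagreement about $\mathcal{N}\satisfies\sigma[\vec a]$ rather than an artifact of the encoding --- all routine once ``sufficiently robust'' is made precise as above.
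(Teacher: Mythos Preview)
Your proposal is correct and follows essentially the same approach as the paper: apply Theorem~\ref{Theorem.SameModelNDifferentS} with $S$ taken to be the (coded) satisfaction relation $\Sat_{\mathcal{N}}$, invoking Tarski's theorem on the non-definability of truth to verify the non-definability hypothesis. You are in fact more explicit than the paper on two points it leaves implicit---why each $M_i$ recognizes $S^{M_i}$ as \emph{the} satisfaction relation, and why $\N^{M_1}=\N^{M_2}$ follows from the definable interpretation of $\N$ in $\mathcal{N}$---but the underlying argument is the same.
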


\begin{proof}
By `sufficiently robust' we mean that $\mathcal{N}=\<N,\ldots>$ interprets the standard model of arithmetic $\N$, so that it can handle the \Godel\ coding of formulas, and also that it has a definable pairing function, so that it contains (\Godel\ codes for) all finite tuples of its elements. We are assuming that $\mathcal{N}$ is sufficiently robust from the perspective of the model $M$. It follows by Tarski's theorem on the non-definability of truth that the satisfaction relation of $M$ for $\mathcal{N}$---that is, the relation $\Sat(\varphi,\vec a)$, which holds just in case  $\mathcal{N}\satisfies\varphi[\vec a]$ from the perspective of $M$---is not definable in $\mathcal{N}$. And so the current theorem is a consequence of theorem \ref{Theorem.SameModelNDifferentS}, using that relation.
\end{proof}

Many of the various examples of non-absoluteness that we have mentioned in this article can now be seen as instances of corollary  \ref{Corollary.SameModelNDifferentTruths}, as in the following further corollary. In particular, statement (2) of corollary \ref{Corollary.SameStructuresDifferentTruths} is a strengthening of theorem \ref{Theorem.SameNDifferentTruths}, since we now get the models $M_1$ and $M_2$ as elementary extensions of any given countable model $M$.

\begin{corollary}\label{Corollary.SameStructuresDifferentTruths} Every countable model of set theory $M$ has elementary extensions $M\elesub M_1$ and $M\elesub M_2$, respectively in each case, which\dots
 \begin{enumerate}
 \item agree on their natural numbers with successor and order $\<\N,S,{\lt}>^{M_1}=\<\N,S,{\lt}>^{M_2}$, but which disagree on the even numbers, the prime numbers and the powers of two, so that $M_1$ thinks some $n$ is a large odd prime number, but $M_2$ thinks it is a large power of $2$.
$$\begin{tikzpicture}[scale=.7, xscale=.8]
 \node at (0,1) (N) {$\bullet$};
 \node[anchor=south,scale=.8] at (N) {$\N$};
 \draw (0,1) --(0,0) --(-.3,1) to [out=110, in=-25](-2,3) --(-.2,3) to [out=-35,in=70](.3,1) --(-.3,1) to [out=110,in=-145] (.2,3) --(2,3) to [out=-155,in=70](.3,1) --(0,0);
 \node[anchor=south] at (-1,3) {$M_1$};
 \node[anchor=south] at (1,3) {$M_2$};
\end{tikzpicture}
\qquad
\vbox{\hbox{$M_1,M_2\satisfies\ZFC$}\medskip\hbox{$\<\N,S,{\lt}>^{M_1}=\<\N,S,{\lt}>^{M_2}$}\medskip\hbox{$M_1$ believes $\N\satisfies n$ is an odd prime}\medskip\hbox{$M_2$ believes $\N\satisfies n=2^k$ for some $k$}}
$$
 \item agree on their natural numbers with successor, addition and order $\<\N,S,{+},{\lt}>^{M_1}=\<\N,S,{+},{\lt}>^{M_2}$, but which disagree on natural-number multiplication, so that $M_1$ thinks $a\cdot b=c$ for some particular natural numbers, but $M_2$ disagrees.
$$\begin{tikzpicture}[scale=.7, xscale=.8]
 \node at (0,1) (N) {$\bullet$};
 \node[anchor=south,scale=.8] at (N) {$\N$};
 \draw (0,1) --(0,0) --(-.3,1) to [out=110, in=-25](-2,3) --(-.2,3) to [out=-35,in=70](.3,1) --(-.3,1) to [out=110,in=-145] (.2,3) --(2,3) to [out=-155,in=70](.3,1) --(0,0);
 \node[anchor=south] at (-1,3) {$M_1$};
 \node[anchor=south] at (1,3) {$M_2$};
\end{tikzpicture}
\qquad
\vbox{\hbox{$M_1,M_2\satisfies\ZFC$}\medskip\hbox{$\<\N,S,{+},{\lt}>^{M_1}=\<\N,S,{+},{\lt}>^{M_2}$}\medskip\hbox{$M_1$ believes $\N\satisfies a\cdot b=c$}\medskip\hbox{$M_2$ believes $\N\satisfies a\cdot b\neq c$}}
$$
  \item agree on their standard model of arithmetic $\<\N,{+},{\cdot},0,1,{\lt}>^{M_1}=\<\N,{+},{\cdot},0,1,{\lt}>^{M_2}$, but which disagree on their theories of arithmetic truth.
$$\begin{tikzpicture}[scale=.7, xscale=.8]
 \node at (0,1) (N) {$\bullet$};
 \node[anchor=south,scale=.8] at (N) {$\N$};
 \draw (0,1) --(0,0) --(-.3,1) to [out=110, in=-25](-2,3) --(-.2,3) to [out=-35,in=70](.3,1) --(-.3,1) to [out=110,in=-145] (.2,3) --(2,3) to [out=-155,in=70](.3,1) --(0,0);
 \node[anchor=south] at (-1,3) {$M_1$};
 \node[anchor=south] at (1,3) {$M_2$};
\end{tikzpicture}
\qquad
\vbox{\hbox{$M_1,M_2\satisfies\ZFC$}\medskip\hbox{$\N^{M_1}=\N^{M_2}$}\medskip\hbox{$M_1$ believes $\N\satisfies\sigma$}\medskip\hbox{$M_2$ believes $\N\satisfies\neg\sigma$}}
$$
  \item agree on their natural numbers $\N^{M_1}=\N^{M_2}$, their reals $\R^{M_1}=\R^{M_2}$ and their hereditarily countable sets $\<\HC,{\in}>^{M_1}=\<\HC,{\in}>^{M_2}$, but which disagree on their theories of projective truth.
$$\begin{tikzpicture}[scale=.7]
 \node at (0,2) (R) {$\bullet$};
 \node[anchor=south,scale=.8] at (R) {\tiny$\HC$};
 \node[anchor=west] at (.4,1.6) {\tiny$\omega$};
 \draw (0,2) --(0,0) --(-.5,2) to [out=110, in=-35] (-2,3.5) --(-.2,3.5) to [out=-45,in=75] (.5,2) --(.425,1.7) to [out=100,in=0] (0,2) to [out=180,in=80] (-.425,1.7) --(-.5,2) to [out=105,in=-135] (.2,3.5) --(2,3.5) to [out=-145,in=70] (.5,2) --(0,0);
 \draw (-.4,1.6) --(.4,1.6);
 \node[anchor=south] at (-1.2,3.5) {$M_1$};
 \node[anchor=south] at (1.2,3.5) {$M_2$};
\end{tikzpicture}
\qquad
\vbox{\hbox{$M_1,M_2\satisfies\ZFC$}\medskip\hbox{$\N^{M_1}=\N^{M_2}\qquad \R^{M_1}=\R^{M_2}$}\medskip\hbox{$\<\HC,{\in}>^{M_1}=\<\HC,{\in}>^{M_2}$}\medskip\hbox{$M_1$ believes $\HC\satisfies\sigma$}\medskip\hbox{$M_2$ believes $\HC\satisfies\neg\sigma$}}
$$
  \item agree on the structure $\<H_{\omega_2},{\in}>^{M_1}=\<H_{\omega_2},{\in}>^{M_2}$, but which disagree on truth in this structure.
$$\begin{tikzpicture}[scale=.7]
 \node at (0,2) (R) {$\bullet$};
 \node[anchor=south,scale=.8] at (R) {\tiny$H_{\omega_2}$};
 \node[anchor=west] at (.4,1.6) {\tiny$\omega_1$};
 \node[anchor=west] at (.3,1.2) {\tiny$\omega$};
 \draw (0,2) --(0,0) --(-.5,2) to [out=110, in=-35] (-2,3.5) --(-.2,3.5) to [out=-45,in=75] (.5,2) --(.425,1.7) to [out=100,in=0] (0,2) to [out=180,in=80] (-.425,1.7) --(-.5,2) to [out=105,in=-135] (.2,3.5) --(2,3.5) to [out=-145,in=70] (.5,2) --(0,0);
 \draw (-.4,1.6) --(.4,1.6);
 \draw (-.3,1.2) --(.3,1.2);
 \node[anchor=south] at (-1.2,3.5) {$M_1$};
 \node[anchor=south] at (1.2,3.5) {$M_2$};
\end{tikzpicture}
\qquad
\vbox{\hbox{$M_1,M_2\satisfies\ZFC$}\medskip
 \hbox{$\<H_{\omega_2},{\in}>^{M_1}=\<H_{\omega_2},{\in}>^{M_2}$}\medskip
 \hbox{$M_1$ believes $H_{\omega_2}\satisfies\sigma$}\medskip
 \hbox{$M_2$ believes $H_{\omega_2}\satisfies\neg\sigma$}\medskip}
$$
  \item have a transitive rank-initial segment $\<V_\delta,{\in}>^{M_1}=\<V_\delta,{\in}>^{M_2}$ in common, but which disagree on truth in this structure.
$$\begin{tikzpicture}[scale=.7]
 \node at (0,2) (N) {$\bullet$};
 \node[anchor=south,scale=.8] at (N) {$V_\delta$};
 \draw (0,0) --(-.5,2) to [out=110, in=-35] (-2,3.5) --(-.2,3.5) to [out=-45,in=75] (.5,2) --(-.5,2) to [out=105,in=-135] (.2,3.5) --(2,3.5) to [out=-145,in=70] (.5,2) --(0,0);
 \node[anchor=south] at (-1.2,3.5) {$M_1$};
 \node[anchor=south] at (1.2,3.5) {$M_2$};
\end{tikzpicture}
\qquad
\vbox{\hbox{$M_1,M_2\satisfies\ZFC$}\medskip\hbox{$V_\delta^{M_1}=V_\delta^{M_2}\satisfies\ZFC$}\medskip\hbox{$M_1$ believes $V_\delta\satisfies\sigma$}\medskip\hbox{$M_2$ believes $V_\delta\satisfies\neg\sigma$}\medskip}
$$
 \end{enumerate}
\end{corollary}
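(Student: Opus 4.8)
The plan is to derive all five items as applications of Corollary~\ref{Corollary.SameModelNDifferentTruths} and its underlying Theorem~\ref{Theorem.SameModelNDifferentS}. In each case the work is only to exhibit, inside the given countable $M\satisfies\ZFC$, a structure $\mathcal{N}$ and a predicate $S$ on it that $M$ regards as not definable in $\mathcal{N}$ from parameters, and to note that agreement on $\mathcal{N}$ forces agreement on whatever substructures the item names. Once Theorem~\ref{Theorem.SameModelNDifferentS} produces $M\elesub M_1$ and $M\elesub M_2$ with $\mathcal{N}^{M_1}=\mathcal{N}^{M_2}$ but $S^{M_1}\neq S^{M_2}$, we have $M_1,M_2\satisfies\ZFC$ since $M$ does, and any object lying in exactly one of $S^{M_1},S^{M_2}$ is the promised witness, its status as (a code for) an object of the relevant sort being preserved by elementarity and by the agreement on $\mathcal{N}$.

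For~(1), I would take $\mathcal{N}=\<\N,\mathrm{succ},{+},{\lt}>^M$ and let $S$ be the graph of multiplication $\set{(a,b,c)\st a\cdot b=c}$ as computed in $M$. Since $\ZFC$ proves Presburger's theorem on the structure $\<\N,{+},{\lt}>$, so that every subset of $\N$ definable from parameters there is eventually periodic, unlike the set of squares, the model $M$ regards $S$ as undefinable in $\mathcal{N}$ from parameters. Theorem~\ref{Theorem.SameModelNDifferentS}, whose proof goes through verbatim for relations of any finite arity, then yields $M\elesub M_1$, $M\elesub M_2$ agreeing on $\<\N,\mathrm{succ},{+},{\lt}>$ but with $S^{M_1}\neq S^{M_2}$, and a triple $(a,b,c)$ in one but not the other is the witness on which they disagree about multiplication.

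Items~(2), (4), and~(5) are direct instances of Corollary~\ref{Corollary.SameModelNDifferentTruths}, applied to $\mathcal{N}=\N^M=\<\N,{+},{\cdot},0,1,{\lt}>^M$, to $\mathcal{N}=\<H_{\omega_2},{\in}>^M$, and to $\mathcal{N}=\<V_\delta,{\in}>^M$, each of which is sufficiently robust: it interprets the standard model of arithmetic and has a definable pairing function. For~(5) one may take $\delta$ to be any limit ordinal above $\omega$ in $M$; if $M$ has a $\delta$ with $V_\delta^M\satisfies\ZFC$ (for instance if $M$ has an inaccessible cardinal) one chooses such a $\delta$, so that by elementarity $M_1$ and $M_2$ still regard $\<V_\delta,{\in}>$ as a model of $\ZFC$. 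In each case the corollary gives elementary extensions of $M$ agreeing on the structure but disagreeing on some instance $\mathcal{N}\satisfies\sigma[\vec a]$ of its satisfaction relation; to obtain the parameter-free form shown in the diagrams, one instead applies Theorem~\ref{Theorem.SameModelNDifferentS} with the predicate $S=\Th(\mathcal{N})^M$, which is undefinable in $\mathcal{N}$ by Tarski's theorem. Item~(2) in particular sharpens Theorem~\ref{Theorem.SameNDifferentTruths} by realizing the two models as elementary extensions of the prescribed $M$.

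Item~(3) follows the same pattern with $\mathcal{N}=\<\HC,{\in}>^M$. As each natural number and each real is hereditarily countable, and $\omega$ (as the least limit ordinal), its arithmetic, and the set of reals are all definable in $\<\HC,{\in}>$, any two models agreeing on $\<\HC,{\in}>$ agree on $\<\N,{+},{\cdot},0,1,{\lt}>$ and on their reals. For $S$ I would use $M$'s predicate of projective (second-order arithmetic) truth. This is not definable over $\<\HC,{\in}>$: a hereditarily countable set is coded by a real via a well-founded extensional relation on $\omega$, and ``$r$ codes such a set'' is $\Pi^1_1$, so were projective truth definable over $\<\HC,{\in}>$ then truth in $\<\HC,{\in}>$ would itself be definable over $\<\HC,{\in}>$, against Tarski's theorem. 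Hence Theorem~\ref{Theorem.SameModelNDifferentS} applies, producing $M_1,M_2$ that agree on $\<\HC,{\in}>$, and therefore on their natural numbers and reals, yet disagree about the truth of some projective sentence. The one step I expect to require care is exactly this last non-definability claim; in the other items the relevant predicate is either classically known to be undefinable or fails to be definable by a bare appeal to Tarski's theorem, and the rest---robustness of the structures and the descent of agreement to named substructures---is routine.
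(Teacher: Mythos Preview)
Your proposal is correct and follows essentially the same approach as the paper: each item is obtained by applying Theorem~\ref{Theorem.SameModelNDifferentS} or Corollary~\ref{Corollary.SameModelNDifferentTruths} to the obvious structure, with (1) handled via the undefinability of multiplication in Presburger arithmetic and (2)--(5) via Tarski's theorem. The only noteworthy difference is in item~(3): the paper observes that the various carriers of projective truth---second-order arithmetic, $\langle\R,\Z,{+},{\cdot},0,1,{<}\rangle$, $\langle V_{\omega+1},{\in}\rangle$, and $\langle\HC,{\in}\rangle$---are mutually interpretable, so projective truth simply \emph{is} the satisfaction relation for $\langle\HC,{\in}\rangle$ and Corollary~\ref{Corollary.SameModelNDifferentTruths} applies directly, whereas you treat projective truth as a separate predicate and argue its undefinability over $\HC$ via the $\Pi^1_1$ coding of hereditarily countable sets; your route is sound but the paper's bi-interpretability observation makes the extra argument unnecessary.
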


\begin{proof}
What we mean by ``respectively'' is that each case may be exhibited separately, using different pairs of extensions $M_1$ and $M_2$. Each statement in the theorem is an immediate consequence of corollary \ref{Corollary.SameModelNDifferentTruths} or of theorem \ref{Theorem.SameModelNDifferentS}. Thanks to Roman Kossak for pointing out statement (2), which follows from theorem \ref{Theorem.SameModelNDifferentS} because multiplication is not definable in Presburger arithmetic $\<\N,+,\lt>$, as the latter is a decidable theory. Statement (1) is proved similarly, since the set of primes and the powers of two are not definable in $\<\N,S,\lt>$, since any nonstandard model of this structure consists of infinitely many $\Z$ chains above the standard part, and these admit numerous automorphisms by translation. By picking a suitable translation, we may move a power of two to an odd prime, and apply the argument in the proof of theorem \ref{Theorem.SameNDifferentTruths}.  For statement (3), we apply corollary \ref{Corollary.SameModelNDifferentTruths} to the structure $\mathcal{N}=\<\N,{+},{\cdot},0,1,{\lt}>^M$. For statement (4), one should clarify exactly what is meant by `projective truth,' since it can be viewed variously as the full second-order theory of the standard model of arithmetic, using $P(\N)^M$, or as the theory of the ordered real field $\<\R,\Z,{+},{\cdot},0,1,{\lt}>$ with a predicate for the integers, or as the theory of the set-theoretic structure $\<V_{\omega+1},{\in}>$, or of the structure of hereditarily countable sets $\HC=\<\HC,{\in}>$. Nevertheless, these models are all bi-interpretable with each other, and one can view projective truth as the satisfaction relation for any of them; in each case, we can make the conclusion of statement (4) by using that structure in corollary \ref{Corollary.SameModelNDifferentTruths}. Notice that one can similarly view arithmetic truth as residing in the structure of hereditarily finite sets $\<\HF,{\in}>$, which is mutually interpretable with the standard model of arithmetic via the Ackermann encoding, and in this case, statements (3), (4) and (5) can be seen as a progression, concerning the structures $H_\omega$, $H_{\omega_1}$ and $H_{\omega_2}$, a progression which continues, of course, to higher orders in set theory. Statement (6) is similarly an immediate consequence of corollary \ref{Corollary.SameModelNDifferentTruths}, using the structure $\mathcal{N}=\<V_\delta,{\in}>^M$.
\end{proof}

Let us explore a bit further how indefiniteness arises in the iterated truth-about-truth hierarchy. Beginning with the standard model of arithmetic $\N_0=\<\N,{+},{\cdot},0,1,{\lt}>$, we may define the standard truth predicate $\Tr_0$ for assertions in the language of arithmetic, and consider the structure $\N_1=\<\N,{+},{\cdot},0,1,{\lt},\Tr_0>$, in the expanded language with a predicate for the truth of arithmetic assertions. In this expanded language, we may make assertions both about arithmetic and about arithmetic truth. Climbing atop this structure, let $\Tr_1=\Th(\N_1)$ be its theory and form the next level of the iterated truths-about-truth hierarchy $\N_2=\<\N,{+},{\cdot},0,1,{\lt},\Tr_0,\Tr_1>$ by appending this new truths-about-truth predicate. We may easily continue in this way, building the finite levels of the hierarchy, each new truth predicate telling us about the truth of arithmetic assertions involving only previous levels of the iterated truth hierarchy. There is a rich literature on various aspects of this iterated truth hierarchy, from Tarski \cite{Tarski1935:TheConceptOfTruthInFormalizedLanguages} to Kripke  \cite{Kripke1975:OutlineOfATheoryOfTruth} and many others, including the related development of the revision theory of truth \cite{GuptaBelnap1993:TheRevisionTheoryOfTruth}. Feferman \cite{Feferman1991:ReflectingOnIncompleteness} treats the iteration of truth as an example where once one accepts certain statements about $\<\N,{+},{\cdot},0,1,{<},\Tr_0,\ldots,\Tr_n>$, then one ought accept certain other statements about $\<\N,{+},{\cdot},0,1,{<},\Tr_0,\ldots,\Tr_{n+1}>$.

\begin{corollary}\label{Corollary.IndefiniteIteratedTruth}
 For every countable model of set theory $M$ and any natural number $n$, there are elementary extensions $M_1$ and $M_2$ of $M$, which have the same natural numbers $\N^{M_1}=\N^{M_2}$, the same iterated arithmetic truth predicates $(\Tr_k)^{M_1}=(\Tr_k)^{M_2}$ for $k<n$ and hence the same iterated truth structure up to $n$,
$$\<\N,{+},{\cdot},0,1,{\lt},\Tr_0,\ldots,\Tr_{n-1}>^{M_1}=\<\N,{+},{\cdot},0,1,{\lt},\Tr_0,\ldots,\Tr_{n-1}>^{M_2},$$
 but which disagree on  the theory of this structure, and hence disagree on the next order of truth, $(\Tr_n)^{M_1}\neq (\Tr_n)^{M_2}$.
\end{corollary}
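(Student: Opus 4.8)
The plan is to read this off directly from theorem \ref{Theorem.SameModelNDifferentS}, in exactly the manner of corollary \ref{Corollary.SameModelNDifferentTruths}, by taking $\mathcal{N}$ to be the iterated truth structure up through level $n$ and taking the nondefinable extra predicate $S$ to be the next level of truth.

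Fix the countable model $M\satisfies\ZFC$ and the standard natural number $n$. Working inside $M$, apply the Tarskian truth definition for set structures $n$ times in succession to form the structure $\N_n=\<\N,{+},{\cdot},0,1,{\lt},\Tr_0,\ldots,\Tr_{n-1}>^M$ together with its theory $\Tr_n=\Th(\N_n)^M$, the collection of those objects that $M$ regards as (\Godel\ codes of) sentences in the finite language of $\N_n$ and that $M$ computes to be true in $\N_n$. Then $\mathcal{N}=\N_n$ is a structure in $M$ in a finite language and $S=\Tr_n$ is an additional predicate with $S\in M$, so theorem \ref{Theorem.SameModelNDifferentS} applies as soon as we know that $M$ believes $S$ is not definable in $\mathcal{N}$ with parameters.

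That is the one thing to verify, and it is just Tarski's theorem on the non-definability of truth applied to $\N_n$ inside $M$: I would argue by the same \Godel\ fixed-point diagonalization used earlier in the paper for $\Tr_0$, now in the richer language carrying $\Tr_0,\ldots,\Tr_{n-1}$ and allowing the parameters---and hence the liar sentence produced---to be nonstandard from the metatheory's standpoint. Concretely, were $M$ to think some $\theta(x,\vec a)$ with $\vec a\in\N^M$ defined $\Tr_n$ over $\N_n$, then, since $\N_n$ interprets arithmetic and so defines the substitution function on codes, the fixed-point lemma carried out inside $M$ would produce a sentence $\sigma$ of the language of $\N_n$ with $\N_n\satisfies\sigma\iff\neg\theta(\sigma,\vec a)$; combining this with $\sigma\in\Tr_n\iff\N_n\satisfies\sigma$ and $\N_n\satisfies\theta(\sigma,\vec a)\iff\sigma\in\Tr_n$ yields the liar contradiction inside $M$. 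The code of $\sigma$ may be nonstandard, but this is harmless, since $\Tr_n^M=\Th(\N_n)^M$ already contains $M$'s nonstandard sentences.

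Granting this, theorem \ref{Theorem.SameModelNDifferentS} hands us elementary extensions $M\elesub M_1$ and $M\elesub M_2$ that agree on $\N_n^{M_1}=\N_n^{M_2}$---hence on $\N^{M_1}=\N^{M_2}$ and on every lower predicate $(\Tr_k)^{M_1}=(\Tr_k)^{M_2}$ for $k<n$---while $S^{M_1}\neq S^{M_2}$, that is, $(\Tr_n)^{M_1}\neq(\Tr_n)^{M_2}$; and $M_1,M_2\satisfies\ZFC$ because they elementarily extend $M$. By elementarity each $M_i$ still regards its $\Tr_n$ as the complete theory $\Th(\N_n)$ of the common structure, so a witness to $S^{M_1}\neq S^{M_2}$ is a sentence $\rho$ in the language of $\N_n$ lying in $(\Tr_n)^{M_1}$ but not in $(\Tr_n)^{M_2}$, with $\neg\rho$ then lying in $(\Tr_n)^{M_2}$; the two models thus disagree on the theory of $\N_n$ and a fortiori on the next order of truth. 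In this sense the corollary is immediate once theorem \ref{Theorem.SameModelNDifferentS} is in place; the only place where any care is needed---the genuine ``obstacle,'' such as it is---is checking that the Tarski undefinability argument survives the passage to the expanded language and to nonstandard parameters and sentence codes, which is precisely why that argument must be run inside $M$ rather than in the metatheory.
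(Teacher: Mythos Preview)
Your proof is correct and is essentially the paper's own argument. The paper simply invokes corollary \ref{Corollary.SameModelNDifferentTruths} with $\mathcal{N}=(\N_n)^M$, which is just a packaged version of what you do: that corollary already combines theorem \ref{Theorem.SameModelNDifferentS} with the Tarski non-definability observation for sufficiently robust structures, and $\N_n$ is manifestly sufficiently robust since it extends the standard arithmetic structure. Your choice to go straight to theorem \ref{Theorem.SameModelNDifferentS} with $S=\Tr_n$ and to spell out the fixed-point argument inside $M$ (including the care about nonstandard codes and parameters) is a perfectly good unpacking of that same step, and your final remark extracting a specific sentence $\rho$ witnessing the disagreement is a correct use of elementarity.
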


\begin{proof}
In other words, we will have $(\N_n)^{M_1}=(\N_n)^{M_2}$, yet $(\Tr_n)^{M_1}\neq(\Tr_n)^{M_2}$. This is an immediate consequence of corollary  \ref{Corollary.SameModelNDifferentTruths}, using the model $\mathcal{N}=(\N_n)^M$.
\end{proof}

In particular, even the cases $n=0,1$ or $2$ are interesting. The case $n=0$ amounts to theorem \ref{Theorem.SameNDifferentTruths}, and the case $n=1$ shows that one can have models of set theory $M_1$ and $M_2$ which have the same standard model of arithmetic $\N^{M_1}=\N^{M_2}$ and the same arithmetic truth $\Tr_0^{M_1}=\Tr_0^{M_2}$, yet disagree in their theory of the theory of arithmetic truth $\Tr_1^{M_1}\neq \Tr_1^{M_2}$. Thus, even if one assumes a definite nature for the structure of arithmetic, and also for arithmetic truth, then still there is indefiniteness as to the nature of truths about arithmetic truth, and so on throughout the iterated truth-about-truth hierarchy; indefiniteness arises at any particular level.

The process of iterating the truth hierarchy of course continues transfinitely, for as long as we have some natural way of representing the ordinals inside $\N$, in order to undertake the \Godel\ coding of formulas in the expanded language and retain the truth predicates as subclasses of $\N$. If $\alpha$ is any computable ordinal, for example, then we have a representation of $\alpha$ inside $\N$ using a computable relation of order type $\alpha$, and we may develop a natural \Godel\ coding for ordinals up to $\alpha$ and formulas in the language $\set{{+},{\cdot},0,1,{\lt},\Tr_\xi}_{\xi<\alpha}$. If $\eta<\alpha$ and $\N_\eta=\<\N,{+},{\cdot},0,1,{\lt},\Tr_\xi>_{\xi<\eta}$ is defined up to $\eta$, then we form $\N_{\eta+1}$ by adding the $\eta^\th$ order truth predicate $\Tr_\eta$ for assertions in the language of $\N_\eta$, which can make reference to the simpler truth predicates $\Tr_\xi$ for $\xi<\eta$ using the \Godel\ coding established by the computability of $\alpha$. The higher levels of this truths-about-truth hierarchy provide truth predicates for assertions about lower-level truths-about-truth for arithmetic. We note that indefiniteness cannot arise at limit ordinal stages, since when $\lambda$ is a limit ordinal, then a sentence $\sigma$ is true at stage $\lambda$ just in case it is true at any stage after which all the truth predicates appearing in $\sigma$ have arisen. In other words, $\Tr_\lambda=\Union_{\xi\lt\lambda}\Tr_\xi$ for any limit ordinal $\lambda$. Nevertheless, we find it likely that there is version of corollary \ref{Corollary.IndefiniteIteratedTruth} revealing indefiniteness in the transfinite realm of the iterated truth hierarchy.

We conclude this section with some further applications of theorem \ref{Theorem.SameModelNDifferentS}, using non-definable predicates other than a satisfaction predicate.

\begin{corollary}\
 \begin{enumerate}
  \item Every countable model of set theory $M$ has elementary extensions $M\elesub M_1$ and $M\elesub M_2$, which agree on their standard model of arithmetic $\N^{M_1}=\N^{M_2}$ and have a computable linear order $\trianglelt$ on $\N$ in common, yet $M_1$ thinks $\<\N,{\trianglelt}>$ is a well-order and $M_2$ does not.
  \item Similarly, every such $M$ has such $M_1$ and $M_2$, which agree on their standard model of arithmetic $\N^{M_1}=\N^{M_2}$ and thus agree on the computational behavior of all programs, yet they disagree on Kleene's $\mathcal O$, with $\mathcal{O}^{M_1}\neq\mathcal{O}^{M_2}$.
  \item Every countable model of set theory $M$ which thinks $0^\sharp$ exists has elementary extensions $M_1$ and $M_2$, which agree on the ordinals up to any desired uncountable cardinal $\kappa\in M$, on the constructible universe $L_\kappa^{M_1}=L_\kappa^{M_2}$ up to $\kappa$ and on the facts that $\kappa$ is an uncountable cardinal and $0^\sharp$ exists, yet disagree on which ordinals below $\kappa$ are the Silver indiscernibles. Similarly, we may ensure that they disagree on $0^\sharp$, so that $(0^\sharp)^{M_1}\neq(0^\sharp)^{M_2}$.
 \end{enumerate}
\end{corollary}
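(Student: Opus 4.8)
The plan is to deduce all three statements from theorem~\ref{Theorem.SameModelNDifferentS}. In each case the work is to locate, inside the given countable model $M$, a structure $\mathcal N$ and an additional predicate $S\of\mathcal N$ with $S\in M$ that $M$ proves to be undefinable over $\mathcal N$ with parameters, chosen so that the ambiguity of $S$ over the elementary extensions supplied by theorem~\ref{Theorem.SameModelNDifferentS} is precisely the disagreement wanted.

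For statements (1) and (2) I would take $\mathcal N=\N^M$, the standard model of arithmetic of $M$. Since $M$ proves that Kleene's $\mathcal O$ and the index set $\WO=\set{e\st\varphi_e\text{ codes a well-ordering of }\N}$ are $\Pi^1_1$-complete, hence not hyperarithmetic, hence not definable over $\<\N,{+},{\cdot},0,1,{\lt}>$ even with number parameters (a number parameter being absorbable into the defining formula), theorem~\ref{Theorem.SameModelNDifferentS} applies with $S=\mathcal O^M$ or with $S=\WO^M$. For statement (2) this at once yields $M\elesub M_1,M_2$ with $\N^{M_1}=\N^{M_2}$ yet $\mathcal O^{M_1}\neq\mathcal O^{M_2}$. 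For statement (1) I would choose $e$ in the symmetric difference of $\WO^{M_1}$ and $\WO^{M_2}$, say $e\in\WO^{M_1}\minus\WO^{M_2}$, and let $\trianglelt$ be the relation on $\N$ computed by the $e$-th program. One must still check that $\trianglelt$ is literally the same relation in $M_1$ and $M_2$ and is a linear order in both, and here a small point enters: in the construction behind theorem~\ref{Theorem.SameModelNDifferentS}, $M_2$ arises from $M_1$ by an automorphism of $\N^{M_1}$ fixing $\N^M$ pointwise, so $M_1$ and $M_2$ assign the same truth value to every \emph{standard} arithmetic formula with arbitrary parameters from the common structure $\N^{M_1}=\N^{M_2}$, this agreement being proved by induction in the metatheory on standard formulas, each clause of the Tarski recursion referring only to the common structure $\N$. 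Since ``$\varphi_e$ halts on $(x,y)$ with output $1$'' and ``$\varphi_e$ is total and linearly orders $\N$'' are standard arithmetic assertions, $\trianglelt$ is the same subset of $\N\cross\N$ in $M_1$ and $M_2$ and is a linear order in both, and it is a well-order in $M_1$ but not in $M_2$ because $e\in\WO^{M_1}\minus\WO^{M_2}$.

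For statement (3) I would fix an uncountable cardinal $\kappa\in M$ and take $\mathcal N=\<L_\kappa,{\in}>^M$. Because $M$ believes $0^\sharp$ exists, $M$ proves that every uncountable cardinal is a Silver indiscernible and hence $L_\kappa\prec L$, and therefore that $0^\sharp$ is definable from the parameter $I\cap\kappa$ together with $\<L_\kappa,{\in}>$ (the first $\omega$ indiscernibles lying below $\omega_1\leq\kappa$, and truth in $L$ for the relevant formulas being computed inside $L_\kappa$); since $M$ also proves $0^\sharp\notin L$ while every parameter-definable subset of $L_\kappa$ lies in $L_{\kappa+1}\of L$, it follows that $M$ proves $(I\cap\kappa)^M$ is not definable over $\<L_\kappa,{\in}>$ with parameters. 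Theorem~\ref{Theorem.SameModelNDifferentS} then supplies $M\elesub M_1,M_2$ with $\<L_\kappa,{\in}>^{M_1}=\<L_\kappa,{\in}>^{M_2}$---whence $M_1$ and $M_2$ agree on the ordinals below $\kappa$, and by elementarity they agree that $\kappa$ is an uncountable cardinal and that $0^\sharp$ exists---while $(I\cap\kappa)^{M_1}\neq(I\cap\kappa)^{M_2}$, so they disagree on which ordinals below $\kappa$ are Silver indiscernibles. The final clause is the same construction with $S=(0^\sharp)^M$ in place of $(I\cap\kappa)^M$, using that $M$ proves $0^\sharp\notin L$ and hence that $0^\sharp$ is not definable over $\<L_\kappa,{\in}>$ with parameters.

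The one genuinely delicate point I expect is the one flagged in statement (1): arranging that the witness $\trianglelt$ is \emph{one and the same} computable linear order in $M_1$ and $M_2$, so that the disagreement concerns well-foundedness alone---this requires the observation that standard arithmetic satisfaction is absolute between the two extensions produced by theorem~\ref{Theorem.SameModelNDifferentS}. The remainder is routine verification that the named predicates really are predicates on the named structures ($\mathcal O,\WO,0^\sharp\of\N\of L_\kappa$ and $I\cap\kappa\of\kappa\of L_\kappa$), that they belong to $M$, and that $M$ itself proves their undefinability, so that theorem~\ref{Theorem.SameModelNDifferentS} may be invoked.
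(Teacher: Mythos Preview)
Your proof is correct and follows the paper's approach: each item is obtained by applying theorem~\ref{Theorem.SameModelNDifferentS} with exactly the same choices of structure and undefinable predicate ($\WO$, $\mathcal O$, $I\cap\kappa$, $0^\sharp$), and your treatment of the absoluteness issue in (1) is in fact more explicit than the paper's. The only noteworthy variation is in (3), where you argue that $I\cap\kappa$ is undefinable over $L_\kappa$ via $0^\sharp\notin L$, whereas the paper observes instead that from $I\cap\kappa$ one could define a truth predicate for $L_\kappa$ (since each indiscernible $\alpha<\kappa$ has $L_\alpha\prec L_\kappa$, so that $L_\kappa\models\varphi[\vec a]$ iff $L_\alpha\models\varphi[\vec a]$ for any indiscernible $\alpha$ with $\vec a\in L_\alpha$, the latter being first-order in $\langle L_\kappa,{\in},I\rangle$ as satisfaction over a set); both routes are valid.
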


\begin{proof}
These are each consequences of theorem \ref{Theorem.SameModelNDifferentS}. For statement (1), consider the structure $\mathcal{N}=\N^M$, with the predicate $S=\WO^M$, the set of indices of computably enumerable well-orderings on $\N$ in $M$. This is a $\Pi^1_1$-complete set of natural numbers, and hence not first-order definable in the structure $\mathcal{N}$ from the perspective of $M$. Thus, by theorem \ref{Theorem.SameModelNDifferentS}, we get models of set theory $M_1$ and $M_2$, elementarily extending $M$, which agree on $\N^{M_1}=\N^{M_2}$, yet disagree on $\WO^{M_1}\neq\WO^{M_2}$. So there is some c.e.~relation $\trianglelt$ in common, yet $M_1$ (we may assume) thinks it is a well-order and $M_2$ does not. Since every c.e.~order is isomorphic to a computable order, we may furthermore assume that $\trianglelt$ is computable, and the models will compute it with the same program, and both see that it is a linear order, as desired. Statement (2) follows as a consequence, since using the program computing $\trianglelt$, we may construct deviations between $\mathcal{O}^{M_1}$ and $\mathcal{O}^{M_2}$; or alternatively, statement (2) follows immediately from theorem \ref{Theorem.SameModelNDifferentS}, since $\mathcal{O}$ is not definable in $\N$. Statement (3) also follows from theorem \ref{Theorem.SameModelNDifferentS}, since the class of Silver indiscernibles below $\kappa$ is not definable in $\<L_\kappa,{\in}>$ in $M$, as from it we could define a truth predicate for that structure. So there must be extensions which agree on $\<L_\kappa,{\in}>^{M_1}=\<L_\kappa,{\in}>^{M_2}$ and on the fact that $\kappa$ is an uncountable cardinal and that $0^\sharp$ exists, yet disagree on the Silver indiscernibles below $\kappa$. Similarly, s $0^\sharp$ also is not definable in $\<L_\kappa,\in>$, we may make them disagree on $0^\sharp$.
\end{proof}

We expect that the reader will be able to construct many further instances of the phenomenon. Here is another example we found striking.

\begin{theorem}\label{Theorem.NonabsolutenessOfBeingArithmetic}
 Every countable model of set theory $M$ has elementary extensions $M_1$ and $M_2$, which agree on the structure of their standard natural numbers $\<\N,{+},{\cdot},0,1,{\lt}>^{M_1}=\<\N,{+},{\cdot},0,1,{\lt}>^{M_2}$, and which have a set $A\of\N$ in common, extensionally identical in $M_1$ and $M_2$, yet $M_1$ thinks $A$ is first-order definable in $\N$ and $M_2$ thinks it is not.
\end{theorem}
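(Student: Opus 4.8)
The plan is to re-run the copy-by-automorphism construction of theorem~\ref{Theorem.SameModelNDifferentS}, but to choose the automorphism so that it carries an undefinable subset of $\N$ onto a definable one. So I would first fix a countable elementary extension $M_1\succ M$ that is computably saturated, $\omega$-nonstandard, and contains a natural number above every element of $\N^M$; then $\N^M$ is a proper bounded cut in $\N^{M_1}$, and $\N^{M_1}$ is itself countable and computably saturated. The goal then is to produce an automorphism $\pi$ of $\N^{M_1}$ fixing $\N^M$ pointwise, together with sets $A,B\in P(\N)^{M_1}$ with $\pi[B]=A$, such that $M_1$ thinks $A$ is definable in $\N$ while $M_1$ thinks $B$ is not. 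Granting this, let $M_2$ be a copy of $M_1$ containing $M$, witnessed by an isomorphism $\pi^*\colon M_1\to M_2$ extending $\pi$ and fixing $M$ pointwise; then $M\prec M_2$, $\N^{M_1}=\N^{M_2}$, and the set $A$ occurs inside $M_2$ as $\pi^*(B)$, so $M_1$ thinks $A$ is definable in $\N$ and $M_2$ thinks it is not, as required.

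For the definable set $A$ I would take a partial truth predicate: picking a nonstandard $c\in\N^{M_1}$, set $A=\{\sigma:\sigma\text{ is a true }\Sigma_c\text{-sentence}\}$, as computed inside $M_1$. Since $M_1$ proves that $\Sigma_n$-truth is uniformly $\Sigma_n$-definable, $M_1$ regards $A$ as definable in $\N$ from the parameter $c$; on the other hand $A$ is not definable in $\N^{M_1}$ by any standard-length formula, even with parameters, because $A$ restricted to the standard sentences is genuine arithmetic truth, so a standard definition would contradict Tarski's theorem at a standard-length \Godel\ fixed point. This failure of arithmetic definability is exactly what makes room for $\pi$: just as in the direct proof of theorem~\ref{Theorem.SameNDifferentTruths}, over any finite parameter set the truth value of a $\Sigma_c$-sentence is not decided by its type over those parameters, so $A$ and its complement each meet every $1$-type of sentences over a finite set --- and this is the condition a back-and-forth construction needs in order to transport a predicate along an automorphism of $\N^{M_1}$.

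The real work is to manufacture a conjugate $B=\pi^{-1}[A]$ that $M_1$ actually codes and regards as undefinable; a generic automorphism of $\N^{M_1}$ would push $A$ right out of $P(\N)^{M_1}$, so $B$ has to be present in $M_1$ from the start. Rather than fixing $M_1$ first, I would build it by the method of propositions~\ref{Proposition.ZFCStandard} and~\ref{Proposition.ZFCStandardTA}: realize, inside a computably saturated model of $\ZFC+\Th(M,m)_{m\in M}$, a set $B\of\N$ for which (i)~$B$ consists of sentences, (ii)~$B$ agrees with $A$ on all sentences below a fixed $d>\N^M$, and hence on $\N^M$, (iii)~above $d$ the set $B$ is generic, in that $B$ and its complement each meet every realized recursive $1$-type of sentences over a finite parameter set, and (iv)~$B$ is not definable in $\N$. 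Each finite fragment of the resulting theory is satisfiable --- here one uses, inside $\ZFC$, that there are undefinable subsets of $\N$ realizing any prescribed finite pattern, together with the observation that, $A$ being non-arithmetically-definable, no finite fragment can force $B$ to be definable. Realizing the theory produces the desired $B$ inside $M_1$, and the homogeneity of the countable computably saturated $\N^{M_1}$ then supplies, by a back-and-forth over $\N^M$, an isomorphism $(\N^{M_1},B)\cong(\N^{M_1},A)$ whose underlying map is the automorphism $\pi$.

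The main obstacle is the success of this last back-and-forth: one must arrange that the generic, coded, undefinable set $B$ is nonetheless elementarily equivalent (in the standard language) to the definable partial truth predicate $A$ over $\N^M$, and has the same standard system, so that the two expansions of $\N^{M_1}$ are genuinely isomorphic over $\N^M$. This cannot be imposed by fiat --- internally, $M_1$ would then prove $B=A$ --- and it is precisely why the definable set must be chosen as a partial truth predicate rather than an arbitrary definable set: the standard theory of $(\N^{M_1},A)$ is weak enough to be shared by undefinable $B$'s, while still forcing $A$ itself to look definable to $M_1$. Verifying this compatibility is the heart of the argument; everything else is assembly along the lines already used for theorems~\ref{Theorem.SameNDifferentTruths} and~\ref{Theorem.SameModelNDifferentS}.
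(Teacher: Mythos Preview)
Your route is genuinely different from the paper's, but the step you yourself flag as ``the heart of the argument'' is not a detail to be verified later---it is where the plan breaks. You need an automorphism of $\N^{M_1}$ carrying $B$ to $A$, hence $(\N^{M_1},B)\equiv(\N^{M_1},A)$ in the language with a unary predicate $P$. But the partial truth predicate $A$ satisfies many standard first-order sentences in that language: for instance, ``no element of $P$ has the form $\sigma\wedge\neg\sigma$'', ``if $\sigma\vee\tau\in P$ then $\sigma\in P$ or $\tau\in P$'', ``$P$ is closed under logical consequence among sentences of bounded complexity'', and every instance of induction in the expanded language (since $M_1$ sees $A$ as definable). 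A set $B$ whose upper part is generic in your sense---meeting every realized recursive $1$-type of sentences---will violate the first of these already: the type ``$x$ codes a sentence of the form $\sigma\wedge\neg\sigma$'' is realized above any $d$, so $B$ would contain a contradiction. More generally, any $B$ elementarily equivalent to $A$ must itself look like a coherent partial truth predicate satisfying all the Tarskian closure conditions that are standard-expressible, and your finite-satisfiability sketch (``undefinable subsets of $\N$ realizing any prescribed finite pattern'') does nothing to secure those conditions while keeping $B$ undefinable in $M_1$'s sense. You have correctly identified the obstacle, but the proposed construction does not overcome it. (A side issue: $\N^M$ is an elementary substructure of $\N^{M_1}$, not a cut; elementary extensions are not end-extensions.)

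The paper avoids this tension entirely by changing the structure to which theorem~\ref{Theorem.SameModelNDifferentS} is applied. The key input is lemma~\ref{Lemma.SubsetOfPlane} (due to Andrew Marks): there is a set $B\subseteq\N\times\N$ such that the index set $S=\{n:B_n\text{ is arithmetic}\}$ is not definable in $\langle\N,{+},{\cdot},0,1,{<},B\rangle$. One then applies theorem~\ref{Theorem.SameModelNDifferentS} with $\mathcal{N}=\langle\N,{+},{\cdot},0,1,{<},B\rangle^M$ and the non-definable predicate $S$, obtaining $M_1,M_2$ that agree on $\mathcal{N}$---hence on $\N$ and on every section $B_n$---but disagree on $S$; the disputed section $A=B_n$ is the common set about whose arithmeticity they differ. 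The point is that ``being arithmetic'' has been packaged as membership in a predicate $S$ that is already known to be non-definable over a fixed structure, so the automorphism produced by theorem~\ref{Theorem.SameModelNDifferentS} need only move one index to another, rather than carry an undefinable set onto a definable one.
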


The proof relies on the following lemma, which was conjectured by the first author and asked on MathOverflow  \cite{MO141387Hamkins:IsThereASubsetOfTheNaturalNumberPlaneWhichDoesntKnowWhichOfItsSlicesAreArithmetic?} specifically in connection with this application. The question was answered there by Andrew Marks, whose proof we adapt here.

\begin{sublemma}[Andrew Marks \cite{MO141387Hamkins:IsThereASubsetOfTheNaturalNumberPlaneWhichDoesntKnowWhichOfItsSlicesAreArithmetic?}]\label{Lemma.SubsetOfPlane}
 There is a subset $B\of\N\times\N$, such that the set $\set{n\in\N\mid B_n\text{ is arithmetic}}$ is not definable in the structure $\<\N,{+},{\cdot},0,1,{\lt},B>$, where $B_n=\set{k\mid(n,k)\in B}$ denotes the $n^\th$ section of $B$.
\end{sublemma}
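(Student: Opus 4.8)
Following Marks's solution, the plan is to produce $B$ as the union of a descending $\omega$-sequence of finite conditions generic for countably many dense sets, using a forcing notion $\P$ designed so that $B$ ends up with \emph{both} arithmetic and non-arithmetic sections while giving $\<\N,{+},{\cdot},0,1,{\lt},B>$ no first-order handle on which is which. A plain Cohen-generic $B\of\N\times\N$ will not do, since then every section $B_n$ is Cohen-generic over the standard model, hence non-arithmetic, so $\set{n\st B_n\text{ is arithmetic}}$ is empty and trivially definable. Instead the conditions of $\P$ are pairs $(u,c)$: here $u$ is a finite partial function $\N\times\N\to 2$ approximating $B$, and $c$ is a finite partial function assigning to some columns either the label ``generic'' or a specific arithmetic set $A_e$ from a fixed enumeration $(A_e)_{e\in\N}$ of the arithmetic subsets of $\N$, subject to the evident compatibility of $u$ with such a commitment; extension is the obvious thing.

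First I would dispatch the routine density facts. It is dense to commit every column and to decide every bit, so $B$ is a genuine subset of $\N\times\N$; a column committed to $A_e$ equals $A_e$ and so is arithmetic; and a column committed to be generic can always be extended to disagree with $A_e$ somewhere, for each $e$, and so is non-arithmetic. Hence, for the resulting $B$, the set $A_B:=\set{n\st B_n\text{ is arithmetic}}$ is exactly the set of columns carrying an arithmetic-set commitment.

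The heart of the matter is the density lemma defeating each candidate definition. Fix a formula $\psi(x)$ in the language $\set{{+},{\cdot},0,1,{\lt},B}$; since every natural number is definable in $\<\N,{+},{\cdot},0,1,{\lt}>$, parameters may be absorbed and one free variable suffices. Given a condition $p$, pick a column $n$ that $p$ does not mention. If some extension of $p+(c(n){=}\text{generic})$ forces $\psi(n)$, that extension witnesses a disagreement, since it also forces $n\notin A_B$. Otherwise $p+(c(n){=}\text{generic})$ forces $\neg\psi(n)$, and then I claim some arithmetic set $A_e$ can be found for which $p+(c(n){=}e)$ does \emph{not} force $\psi(n)$; an extension of this condition then forces $\neg\psi(n)$ while forcing $n\in A_B$, again a disagreement. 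The claim is where the content lies: were it to fail for every $p$ and every fresh $n$, one checks that $\psi$ would define $A_B$ in $\<\N,{+},{\cdot},0,1,{\lt},B>$ for the generic $B$; but a first-order formula over $\<\N,{+},{\cdot},0,1,{\lt},B>$ defines only sets arithmetic in $B$ (finitely many quantifiers, hence bounded quantifier complexity relative to the oracle $B$), whereas deciding arithmeticity of an arbitrary section of $B$, that is ``$\exists e\ B_x=A_e$'', requires an unbounded search through all arithmetic indices and so is properly $\Sigma^0_\omega$ in $B$, beyond the reach of any single formula. Because we have committed the arithmetic columns to generically chosen, unboundedly complex arithmetic sets rather than to a fixed recognizable one, $\<\N,{+},{\cdot},0,1,{\lt},B>$ cannot shortcut this search, and the claim goes through. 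A filter (which exists since $\P$ is countable) meeting, for each $\psi$, the dense set of conditions forcing such a disagreement, together with the density sets above, yields the desired $B$.

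The main obstacle, then, is precisely this density lemma: one must simultaneously keep the arithmetic and non-arithmetic columns first-order indistinguishable in $\<\N,{+},{\cdot},0,1,{\lt},B>$ --- which forbids committing arithmetic columns to recognizable sets and is exactly why plain Cohen forcing fails --- and yet retain enough control to know which columns are arithmetic. The commitment mechanism, together with choosing for each requirement a fresh column and, when needed, a cleverly chosen arithmetic commitment, is what reconciles these demands; it remains to carry out the bookkeeping that installing a commitment on a fresh column after other decisions have been made never disturbs an already-forced truth value, forcing being preserved under strengthening of conditions.
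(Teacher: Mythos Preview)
Your forcing framework is reasonable and the intuition is sound, but the justification you offer for the key claim in Case~2 is circular. You assert that if the claim failed then $\psi$ would define $A_B$ in $\<\N,{+},{\cdot},0,1,{\lt},B>$, and then argue this is impossible because $A_B$ ``is properly $\Sigma^0_\omega$ in $B$, beyond the reach of any single formula.'' But that last assertion---that $A_B$ is not arithmetic in $B$---is precisely the statement of the lemma you are proving. The observation that the naive definition $\exists e\,(B_x=A_e)$ involves an unbounded search gives only an \emph{upper} bound on the complexity of $A_B$ relative to $B$; it does not rule out a shortcut, and for many $B$ there is one (if every column is arithmetic then $A_B=\N$, trivially definable). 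You must show that for \emph{your particular generic} $B$ no shortcut exists, and nothing in the argument does this. Nor does the quantifier structure of your reductio match what is needed: failure of density at $\psi$ means only that \emph{some} condition $p$ has no extension forcing disagreement, not that the claim fails ``for every $p$ and every fresh $n$.''

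The paper's proof addresses exactly this point, by a direct degree-theoretic construction rather than by diagonalizing against formulas. It fixes $A=0^{(\omega)}$ in advance as the intended set of arithmetic columns and builds $B$ so that for $n\in A$ the section $B_n$ is $(n{+}1)$-generic yet computable from $0^{(n+1)}$, while for $n\notin A$ it is arithmetically generic, with enough mutual genericity to guarantee $B^{(n)}\equiv_T 0^{(n)}\oplus C_n$ for a certain $(n{+}1)$-generic join $C_n$. This yields $B^{(n)}\not\geq_T 0^{(n+1)}$ for every $n$, so $A=0^{(\omega)}$ is not arithmetic in $B$. The role of the $(n{+}1)$-generic arithmetic columns is precisely to make them indistinguishable from the non-arithmetic generic columns at the level of any fixed $\Sigma^0_n$ formula---which is the missing mechanism in your Case~2. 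Your approach could plausibly be repaired by committing the ``arithmetic'' columns not to arbitrary $A_e$ but to sufficiently generic arithmetic sets of calibrated complexity, and then proving that such a commitment cannot flip the forced value of $\neg\psi(n)$; but carrying that out is essentially the paper's construction.
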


\begin{proof}
We identify $\N\times\N$ with $\N$ via \Godel\ pairing. We use $X'$ and $X^{(n)}$ to denote the Turing jump of $X$ and the $n^{\rm th}$ Turing jump of $X$, respectively. Recall that a set $X\of\N$ is {\df $n$-generic} if for every $\Sigma^0_n$ subset $S\of 2^\ltomega$, there is an initial segment of $X$ that either is in $S$ or has no extension in $S$. A set is {\df arithmetically generic} if it is $n$-generic for every $n$. It is a standard fact that if $X$ is $1$-generic, then $X' \equiv_T 0' \oplus X$, and if $X$ is $n$-generic and $Y$ is $1$-generic relative to $X \oplus 0^{(n-1)}$, then $X \oplus Y$ is $n$-generic. Let $A = 0^{(\omega)} = \oplus_{n} 0^{(n)}$, which is Turing equivalent to the set of true sentences of first order arithmetic. We shall construct a set $B$ with the following features:
\begin{enumerate}
  \item $\set{n \in \mathbb{N} \mid B_n\text{ is arithmetic}}=A$. More specifically, if $n \in A$, then $B_n$ is $(n+1)$-generic and computable from $0^{(n+1)}$, and if $n \notin A$, then $B_n$ is arithmetically generic.
 \item For each natural number $k$, the set $C_k = \oplus_{i \in \omega} B_{m_i}$ is\\ $(k+1)$-generic, where $m_i$ is the $i$th element of the set\\ $\set{m \in \N \mid m \notin A \text{ or }  k\leq m}$.
\end{enumerate}
Any set $B$ with these features, we claim, fulfills the lemma. To see this, we argue first by argue by induction that $B^{(n)} \equiv_T 0^{(n)} \oplus C_{n}$ for any natural number $n$. This is immediate for $n=0$, since $C_0=\oplus_m B_m$. If $B^{(n)} \equiv_T 0^{(n)} \oplus C_{n}$, then $B^{(n+1)} \equiv_T (0^{(n)} \oplus C_n)' \equiv_T 0^{(n+1)} \oplus C_n$, since $C_n$ is $(n+1)$-generic and hence $1$-generic relative to $0^{(n)}$, but $0^{(n+1)} \oplus C_n \equiv_T 0^{(n+1)} \oplus C_{n+1}$ because either $n \notin A$ and so $C_n = C_{n+1}$ or $n \in A$ so $C_{n+1} \equiv_T B_n \oplus C_n$, since $B_n \leq_T 0^{(n+1)}$. So we have established $B^{(n)}\equiv_T 0^{(n)}\oplus C_n$. Since $C_n$ is $(n+1)$-generic, it follows that $0^{(n)}\oplus C_n$ does not compute $0^{(n+1)}$, and so also $B^{(n)}$ does not compute $0^{(n+1)}$. In particular, $B^{(n)}$ does not compute $A$ for any $n$, and so $A$ is not arithmetically definable from $B$, as desired.

It remains to construct the set $B$ with features (1) and (2). We do so in stages, where after stage $n$ we will have completely specified $B_0, B_1, \ldots, B_n$ and finitely much additional information about $B$ on larger coordinates. To begin, let $B_0$ be any set satisfying the requirement of condition (1). We will ensure inductively that after each stage $n$, the set $C_{k,n} = B_{m_0} \oplus \ldots \oplus B_{m_j}$ is $(k+1)$-generic, where $k\leq n$ and $m_0, \ldots, m_j$ are the elements of $\set{m \in \N \mid m \notin A \text{ or }k\leq m} \cap \{0, \ldots, n\}$.

At stage $n > 0$, for each of the finitely many pairs $(i,k)$ with $i,k < n$, we let $S_{i,k}$ be the $i$th $\Sigma^0_{k+1}$ subset of $2^{<\omega}$, and if possible, we make a finite extension to our current approximation to $B$ so that the resulting approximation to $C_k$ extends an element of $S_{i,k}$, thereby ensuring this instance of (2). If there is no such extension, then since inductively $C_{k,n-1}$ is $(k+1)$-generic, there is already a finite part of our current approximation to $B$ that cannot be extended to extend an element of $S_{i,k}$, and this also ensures this instance of (2).

We complete stage $n$ by specifying $B_n$. If $n \notin A$, then we simply extend our current approximation to $B$ by ensuring that $B_n$ is arithmetically generic relative to $B_0 \oplus \ldots \oplus B_{n-1}$. This ensures this instance of (1) while maintaining our induction assumption that $C_{k,n}$ is $(k+1)$-generic for each $k < n$, since $C_{k,n-1}$ is $(k+1)$-generic and $B_n$ is $(k+1)$-generic relative to it; and similarly, $C_{n,n}$ is now $(n+1)$-generic. If $n \in A$, then we let $B_n$ be any $0^{(n+1)}$-computable $(n+1)$-generic set extending the finitely many bits of $B_n$ specified in the current approximation. For each $k < n$, let $j_0, \ldots, j_t$ be the elements of $A$ in the interval $[k,n)$. Since our new $B_n$ is $1$-generic relative to $0^{(n)}$, which can compute $B_{j_0} \oplus \ldots \oplus B_{j_t}$, it follows that that $B_{j_0} \oplus \ldots \oplus B_{j_t} \oplus B_n$ is $(k+1)$-generic, and so $C_{k,n}$ is $(k+1)$-generic, as the remaining elements in the finite join defining $C_{k,n}$ are mutually arithmetically generic with this; and since $C_{n,n}$ is $n+1$-generic, we maintain our induction assumption.

This completes the construction. We have fulfilled (1) explicitly by the choice of $B_n$, and we fulfilled (2) by systematically deciding all the required sets $S_{i,k}$.
\end{proof}

\begin{proof}[Proof of theorem \ref{Theorem.NonabsolutenessOfBeingArithmetic}] Fix any countable model of set theory $M$. Apply lemma \ref{Lemma.SubsetOfPlane} inside $M$, to find a predicate $B\of\N\times\N$ in $M$, such that the set $S=\set{n\in\N\mid B_n\text{ is arithmetic}}$ is not definable in the structure $\mathcal{N}=\<\N,{+},{\cdot},0,1,{\lt},B>$ from the perspective of $M$. It follows by theorem \ref{Theorem.SameModelNDifferentS} applied to this structure that there are elementary extensions $M_1$ and $M_2$ of $M$, which agree on $\mathcal{N}^{M_1}=\mathcal{N}^{M_2}$ and in particular on $\N^{M_1}=\N^{M_2}$ and $B^{M_1}=B^{M_2}$, but not on $S^{M_1}\neq S^{M_2}$. In particular, there is some section $A=B_n$ that is arithmetic in $M_1$ (we may assume), but not in $M_2$. But since the models agree on the predicate $B$, they agree on all the sections of $B$ and in particular have the set $A$ extensionally in common.
\end{proof}

\section{Indefiniteness for specific types of sentences}\label{Section.SpecificSentences}

Earlier in this article, we proved that the satisfaction relation $\mathcal{N}\satisfies\sigma$ for a first-order structure $\mathcal{N}$ is not generally absolute between the various models of set theory containing that model and able to express this satisfaction relation. But the proofs of non-absoluteness did not generally reveal any specific nature for the sentences on which truth can differ in different models of set theory. We should now like to address this issue by presenting an alternative elementary proof of non-absoluteness, using reflection and compactness, which shows that the theory of a structure can vary on sentences whose specific nature we can identify.

A cardinal $\delta$ is defined to be {\df $\Sigma_n$-correct}, if $V_\delta\elesub_{\Sigma_n} V$. The reflection theorem shows that there is a proper class club $C^{(n)}$ of such cardinals. The cardinal $\delta$ is {\df fully correct}, if it is $\Sigma_n$-correct for every $n$. This latter notion is not expressible as a single assertion in the first-order language of \ZFC, but one may express it as a scheme of assertions about $\delta$, in a language with a constant for $\delta$. Namely, let ``$V_\delta\elesub V$'' denote the theory asserting of every formula $\varphi$ in the language of set theory, that $\forall x\in V_\delta[\varphi(x)\iff \varphi(x)^{V_\delta}]$. Since every finite subtheory of this scheme is proved consistent in \ZFC\ by the reflection theorem, it follows that the theory is finitely consistent and so, by the compactness theorem, $\ZFC+{V_\delta\elesub V}$ is equiconsistent with \ZFC. The reader may note that since $V_\delta\elesub V$ asserts the elementarity separately for each formula, we may not deduce in this theory that $V_\delta\satisfies\ZFC$, but rather only for each axiom of \ZFC\ separately, that $V_\delta$ satisfies that axiom.

\begin{theorem}\label{Theorem.FailureOfCollectionEvenOdd}
 Every countable model of set theory $M\satisfies\ZFC$ has elementary extensions $M_1$ and $M_2$, with a transitive rank-initial segment  $\<V_\delta,{\in}>^{M_1}=\<V_\delta,{\in}>^{M_2}$ in common, such that $M_1$ thinks that the least natural number $n$ for which $V_\delta$ violates $\Sigma_n$-collection is even, but $M_2$ thinks it is odd.
$$\begin{tikzpicture}[scale=.6]
 \node at (0,2) (N) {$\bullet$};
 \node[anchor=south] at (N) {$V_\delta$};
 \draw (0,0) --(-.5,2) to [out=110, in=-35] (-2,3.5) --(-.2,3.5) to [out=-45,in=75] (.5,2) --(-.5,2) to [out=105,in=-135] (.2,3.5) --(2,3.5) to [out=-145,in=70] (.5,2) --(0,0);
 \node[anchor=south] at (-1.2,3.5) {$M_1$};
 \node[anchor=south] at (1.2,3.5) {$M_2$};
\end{tikzpicture}
\qquad
\vbox{\hbox{$M_1,M_2\satisfies\ZFC$}\medskip\hbox{$V_\delta^{M_1}=V_\delta^{M_2}$}\medskip\hbox{$n$ is least with $\neg\Sigma_n$-collection in $V_\delta$}\medskip\hbox{$M_1$ believes $n$ is even}\medskip\hbox{$M_2$ believes $n$ is odd}}
$$
\end{theorem}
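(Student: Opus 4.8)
The plan is to drive everything with the scheme ``$V_\delta\elesub V$'' of the preceding paragraph, whose consistency with $\ZFC$ comes from reflection plus compactness. First I would form the theory, in the language of set theory with constants $c_a$ for every $a\in M$ and a new constant $\delta$, consisting of $\mathrm{ElDiag}(M)$ (so that models of it are elementary extensions of $M$), the scheme ``$V_\delta\elesub V$'', and the single sentence ``$V_\delta\not\satisfies\ZFC$''. Because the scheme forces each standard axiom of $\ZFC$ to hold in $V_\delta$, the sentence ``$V_\delta\not\satisfies\ZFC$'' can only be witnessed by a nonstandard instance of collection, so in any model the quantity $\nu(\delta)$ := ``least $n$ with $V_\delta\not\satisfies\Sigma_n$-collection'' is a well-defined nonstandard element of $\N$. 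For consistency over $M$ I would, given a finite fragment mentioning formulas up to some $\Sigma_N$, interpret $\delta$ as the least $\Sigma_N$-correct cardinal of $M$: it is $\Sigma_N$-correct by the reflection theorem, and $V_\delta\not\satisfies\ZFC$, since if $V_\delta\satisfies\ZFC$ then $V_\delta$ would satisfy the reflection theorem internally and so believe there is a $\Sigma_N$-correct cardinal, contradicting leastness. Thus the theory is consistent; let $M_1$ be a countable computably saturated model of it.

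Next I would observe that $\mathcal W:=\langle V_\delta,{\in}\rangle^{M_1}$ is a countable computably saturated model of $\ZFC$ (each standard axiom holds in it, and computable saturation transfers downward along the scheme $V_\delta\elesub V$ exactly as in the proofs of the earlier theorems of this section), and that $\nu_1:=\nu(\delta)^{M_1}$ is nonstandard; after possibly relabeling, assume $\nu_1$ is even. The crucial point is that the parity of the least collection-failure level is not determined by $\mathcal W$: distinct models containing $\mathcal W$ as their $V_\delta$ use distinct satisfaction classes for $\mathcal W$ (satisfaction is not absolute), and the least collection-failure level of $\mathcal W$ — a nonstandard number, not definable over $\mathcal W$ — can be re-chosen at either parity. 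So I would now run a second compactness argument, over $\mathcal W$: the theory asserting that $\mathcal W$, named by constants for all its elements, sits as the rank-initial segment $V_\delta$ of a model of $\ZFC$ which elementarily extends $M$, satisfies $V_\delta\not\satisfies\ZFC$, and thinks $\nu(\delta)$ is \emph{odd}, is finitely consistent — a finite fragment constrains only finitely much of $\mathcal W$ and of $M$, and one can always end-extend a finite $\Sigma_N$-correct approximation to a model of $\ZFC$ in which $V_\delta$ first fails collection at a nonstandard odd level — so by resplendency (equivalently, recursive saturation) of $\mathcal W$ it is realized, yielding $M_2\succ M$ with $\langle V_\delta,{\in}\rangle^{M_2}=\mathcal W=\langle V_\delta,{\in}\rangle^{M_1}$ and $\nu(\delta)^{M_2}$ odd. (If the realization only gives a model whose $V_\delta$ is isomorphic to $\mathcal W$ over $V_\delta^M$, a final transplant along that isomorphism makes the two rank-initial segments literally equal without changing what the model believes.)

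The hard part will be making precise the claim that the parity of $\nu(\delta)$ is a genuinely free parameter: one must verify that for each parity there really are models of $\ZFC$ carrying a $\Sigma_N$-correct $\delta$ with $V_\delta\not\satisfies\ZFC$ and $\nu(\delta)$ of that parity, which rests on the classical analysis (Krajewski, Kossak–Kotlarski) of how a full satisfaction class for a countable computably saturated model of $\ZFC$ may be varied just above the standard cut, including control over the least sentence it declares false. It is worth noting that the automorphism-twisting used for Theorem~\ref{Theorem.SameModelNDifferentS} is of no help here, since every automorphism of $V_\delta$ fixes $\N$ pointwise and so cannot move $\nu(\delta)$; this is exactly why the ``re-extend $\mathcal W$'' detour, rather than a single twisted copy, is needed to keep the common rank-initial segment.
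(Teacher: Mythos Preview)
Your two-stage plan---build $M_1$ first, then try to re-extend $\mathcal W=V_\delta^{M_1}$ to a second model with the opposite parity---has a genuine gap at the second stage, and the paper's route avoids it entirely. The invocation of resplendency is misapplied: resplendency of $\mathcal W$ lets you \emph{expand} $\mathcal W$ by new predicates, not produce a proper end-extension $M_2\supsetneq\mathcal W$ in which $\mathcal W$ sits as $V_\delta$. And even granting consistency of your second theory, a compactness model would only give $\mathcal W\subseteq V_\delta^{M_2}$, not equality; your parenthetical about transplanting along an isomorphism presupposes an isomorphism you haven't produced. Finally, the concession that controlling the parity ``rests on the classical analysis (Krajewski, Kossak--Kotlarski)'' is exactly where the argument stops being a proof. (Incidentally, your closing remark is also off: automorphisms of a nonstandard $\langle V_\delta,{\in}\rangle$ need not fix $\N$ pointwise---only the definable, hence standard, natural numbers are fixed.)

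The paper's argument is both simpler and complete. It builds the parity requirement into the theory from the outset: let $T_1$ be $\Delta(M)$ together with the scheme $V_\delta\prec V$, the scheme $\{m\in V_\delta : m\in M\}$, and the single sentence ``the least $n$ with $V_\delta\not\models\Sigma_n$-collection exists and is even''; let $T_2$ be the same with ``odd''. The finite-consistency check is entirely elementary and takes place inside $M$: given a finite fragment whose formulas have complexity at most $\Sigma_k$ (choose $k$ of the appropriate parity), interpret $\delta$ as the \emph{next} $\Sigma_k$-correct cardinal of $M$ above the finitely many constants used. Then $V_\delta^M$ satisfies $\Sigma_k$-collection (by correctness) but not $\Sigma_{k+1}$-collection (since $\delta$ is not a limit of $\Sigma_k$-correct cardinals), so the least failure is exactly $k+1$, whose parity you chose. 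No nonstandard levels, no satisfaction-class manipulation, no Krajewski.

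With $T_1,T_2$ both consistent, take a computably saturated model pair $\langle M_1,M_2\rangle$ with $M_i\models T_i$. Then $V_\delta^{M_1}$ and $V_\delta^{M_2}$ are each countable computably saturated elementary extensions of $M$ (via the scheme), hence elementarily equivalent in the language with constants for $M$, hence isomorphic over $M$; replace by a copy to make them literally equal. This single ``saturated pair'' step replaces your entire second stage.
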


\begin{proof}
Suppose that $M$ is a countable model of \ZFC, and consider the theory:
\begin{multline*}
  T_1=\Delta(M) + \ {V_\delta \elesub V}\  + \ \set{m\in V_\delta \mid m\in M}\\
+ \ \text{the least $n$ such that $V_\delta\nvDash\Sigma_n$-collection is even,}
\end{multline*}
where $\Delta(M)$ is the elementary diagram of $M$, in the language of set theory having constants for every element of $M$. Note that the first three components of $T_1$ as it is described above are each infinite schemes, whereas the final assertion ``the least $n$\ldots,'' which we take also to assert that there is such an $n$, is expressible as a single sentence about $\delta$ in the language of set theory, using the \ZFC-definable satisfaction relation for $\<V_\delta,{\in}>$. We claim that this theory is consistent. Consider any finite subtheory $t\of T_1$. We shall find a $\delta$ in $M$ such that $M$ with this $\delta$ will satisfy every assertion in $t$. Let $k$ be a sufficiently large odd number, so that every formula appearing in any part of $t$ has complexity at most $\Sigma_k$, and let $\delta$ be the next $\Sigma_k$-correct cardinal in $M$ above the largest rank of an element of $M$ whose constant appears in $t$. We claim now that $\<M,{\in^M},\delta,m>_{m\in M}\satisfies t$. First, it clearly satisfies all of $\Delta(M)$; and since $\delta$ is $\Sigma_k$-correct in $M$, we have $V_\delta^M\elesub_{\Sigma_k} M$, and since also $\delta$ was large enough to be above any of the constants of $M$ appearing in $t$, we attain any instances from the first three components of the theory that are in the finite sub-theory $t$; since $\delta$ is $\Sigma_k$-correct in $M$, it follows that $V_\delta^M$ satisfies every instance of $\Sigma_k$-collection; but since $\delta$ is not a limit of $\Sigma_k$-correct cardinals (since it is the ``next'' one after a certain ordinal), it follows that $V_\delta^M$ does not satisfy $\Sigma_{k+1}$-collection, and so the least $n$ such that $V_\delta^M\not\satisfies\Sigma_n$-collection is precisely $n=k+1$, which is even. So $T_1$ is finitely consistent and thus consistent. Similarly, the theory
\begin{multline*}
T_2=\Delta(M)+ \ {V_\delta \elesub V} \ + \ \set{m\in V_\delta\mid m\in M}\\
   +\text{the least $n$ such that $V_\delta\nvDash\Sigma_n$-collection is odd}
\end{multline*}
is also consistent.

Let $\<M_1,M_2>$ be a computably saturated model pair, such that $M_1\satisfies T_1$ and $M_2\satisfies T_2$. It follows that $\<V_\delta^{M_1},V_\delta^{M_2}>$ is a computably saturated model pair of elementary extensions of $\<M,{\in^M}>$, which are therefore elementarily equivalent in the language of set theory with constants for elements of $M$, and hence isomorphic by an isomorphism respecting those constants. So we may assume without loss of generality that $\<M,{\in^M}>\prec \<V_\delta,{\in}>^{M_1}=\<V_\delta,{\in}>^{M_2}$. Meanwhile, $M_1$ thinks that this $V_\delta$ violates $\Sigma_n$-collection first at an even $n$ and $M_2$ thinks it does so first for an odd $n$, since these assertions are part of the theories $T_1$ and $T_2$, respectively.
\end{proof}

An alternative version of theorem \ref{Theorem.FailureOfCollectionEvenOdd}, with essentially the same proof, produces elementary extensions $M_1$ and $M_2$ of $M$ with a rank initial segment $V_\delta^{M_1}=V_\delta^{M_2}$ in common, but $M_1$ thinks the least $n$ for which $V_\delta$ is not $\Sigma_n$-correct is even, but $M_2$ thinks it is odd.

By looking not just at the parity of the least $n$ where $\Sigma_n$-collection (or $\Sigma_n$-correctness) fails, but rather, say, at the $k^\th$ binary digit, we can easily make infinitely many different elementary extensions $M_1,M_2,\ldots$ of $M$, with the natural numbers $\N^{M_k}$ and $V_\delta^{M_k}$ all in common, but such that $M_k$ thinks that the least $n$ for which this $V_\delta$ violates $\Sigma_n$-collection is a number with exactly $k$ many prime factors. In particular, even though they have the same structure $\<V_\delta,{\in}>^{M_k}$, they each think specific incompatible things about the theory of this structure.

\section{``Being a model of \ZFC'' is not absolute}\label{Secton.BeingModelZFCNotAbsolute}

In this section, we prove that the question of whether a given transitive rank initial segment $V_\delta$ of the universe is a model of \ZFC\ is not absolute between models of set theory with that rank initial segment in common. Recall that a cardinal $\delta$ is {\df worldly}, if $V_\delta\satisfies\ZFC$.

\begin{theorem}\label{Theorem.SatisfyingZFCNotAbsolute}
 If $M$ is a countable model of set theory in which the worldly cardinals form a stationary proper class (it would suffice, for example, that $M\satisfies\Ord$ is inaccessible), then there are elementary extensions $M_1$ and $M_2$, which have a transitive rank initial segment $\<V_\delta,{\in}>^{M_1}=\<V_\delta,{\in}>^{M_2}$ in common, such that $M_1$ thinks $V_\delta\satisfies\ZFC$ but $M_2$ thinks $V_\delta\not\satisfies\ZFC$. Moreover, such extensions can be found for which $\delta$ is fully correct in both $M_1$ and $M_2$, and furthermore in which $M\elesub V_\delta^{M_1}=V_\delta^{M_2}$.
$$\begin{tikzpicture}[scale=.8]
 \node at (0,2) (N) {$\bullet$};
 \node[anchor=south] at (N) {$V_\delta$};
 \draw (0,0) --(-.5,2) to [out=110, in=-35] (-2,3.5) --(-.2,3.5) to [out=-45,in=75] (.5,2) --(-.5,2) to [out=105,in=-135] (.2,3.5) --(2,3.5) to [out=-145,in=70] (.5,2) --(0,0);
 \node[anchor=south] at (-1.2,3.5) {$M_1$};
 \node[anchor=south] at (1.2,3.5) {$M_2$};
\end{tikzpicture}
\qquad
\vbox{\hbox{$M_1,M_2\satisfies\ZFC$}\medskip\hbox{$\<V_\delta,{\in}>^{M_1}=\<V_\delta,{\in}>^{M_2}$}\medskip\hbox{$M_1$ believes $V_\delta\satisfies\ZFC$}\medskip\hbox{$M_2$ believes $V_\delta\not\satisfies\ZFC$}}
$$
\end{theorem}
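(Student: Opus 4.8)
The plan is to mimic the proof of Theorem~\ref{Theorem.FailureOfCollectionEvenOdd} essentially verbatim, with the parity-of-collection sentence replaced by the single sentence ``$V_\delta\satisfies\ZFC$'' on one side and its negation on the other. Working in the language of set theory with a constant for each element of $M$ together with one new constant $\delta$, consider the theory
$$T_1=\Delta(M)+{V_\delta\elesub V}+\set{m\in V_\delta\st m\in M}+(V_\delta\satisfies\ZFC),$$
and let $T_2$ be the same theory but with $(V_\delta\not\satisfies\ZFC)$ in place of the last conjunct. As in Theorem~\ref{Theorem.FailureOfCollectionEvenOdd}, the first three components are infinite schemes and the last is a single first-order assertion about $\delta$, using the \ZFC-definable satisfaction relation for the \emph{set} structure $\<V_\delta,{\in}>$. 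The conceptual crux of the whole argument, and the one point requiring care, is precisely that ``$V_\delta\satisfies\ZFC$'' is \emph{not} decided by the scheme ``$V_\delta\elesub V$'': the scheme yields only that $V_\delta$ satisfies each standard axiom of \ZFC\ separately, so it is entirely consistent --- in an $\omega$-nonstandard model --- for ``$V_\delta\elesub V$'' to hold while $V_\delta\not\satisfies\ZFC$, some nonstandard ``axiom'' failing in $V_\delta$. Granting that both $T_1$ and $T_2$ are consistent, the models are then assembled exactly as in Theorem~\ref{Theorem.FailureOfCollectionEvenOdd}: take a computably saturated model pair $\<M_1,M_2>$ with $M_1\satisfies T_1$ and $M_2\satisfies T_2$; then $\<V_\delta^{M_1},V_\delta^{M_2}>$ is a computably saturated pair of elementary extensions of $\<M,{\in^M}>$ in the language with constants for elements of $M$, hence elementarily equivalent and so isomorphic over those constants, whence we may arrange $\<M,{\in^M}>\prec\<V_\delta,{\in}>^{M_1}=\<V_\delta,{\in}>^{M_2}$. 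Since the full scheme ``$V_\delta\elesub V$'' lies in both $T_1$ and $T_2$, the common $\delta$ is fully correct in both $M_1$ and $M_2$; and $M_1$ believes $V_\delta\satisfies\ZFC$ while $M_2$ believes $V_\delta\not\satisfies\ZFC$, since these assertions lie in the respective theories.

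It remains to check consistency of $T_1$ and $T_2$, in each case by finite consistency together with compactness, and this is where the hypothesis on worldly cardinals enters (for $T_1$ only). For a finite $t\of T_1$, let $k$ bound the complexity of the finitely many instances of ``$V_\delta\elesub V$'' appearing in $t$, and let $\alpha$ bound the ranks of the elements of $M$ whose constants appear in $t$. In $M$ the $\Sigma_k$-correct cardinals form a proper class club $C^{(k)}$, and by hypothesis the worldly cardinals form a stationary class; hence their intersection is stationary, in particular unbounded, so we may fix a worldly $\Sigma_k$-correct cardinal $\delta$ of $M$ above $\alpha$. Then $\<M,{\in^M},\delta,m>_{m\in M}\satisfies t$: it satisfies $\Delta(M)$ trivially, it satisfies the relevant instances of ``$V_\delta\elesub V$'' because $V_\delta^M\elesub_{\Sigma_k}M$, it satisfies $m\in V_\delta$ for the relevant $m$ because $\delta>\alpha$, and it satisfies $V_\delta\satisfies\ZFC$ because $\delta$ is worldly in $M$. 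So $T_1$ is finitely consistent and hence consistent.

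For $T_2$ no large-cardinal hypothesis is needed. Given a finite $t\of T_2$, again let $k$ bound the complexity of the ``$V_\delta\elesub V$'' instances in $t$ and $\alpha$ the ranks of the relevant constants, and this time take $\delta$ to be the \emph{least} $\Sigma_k$-correct cardinal of $M$ above $\alpha$. Exactly as in the proof of Theorem~\ref{Theorem.FailureOfCollectionEvenOdd}, $\delta$ is then not a limit of $\Sigma_k$-correct cardinals, so $V_\delta^M$ fails an instance of $\Sigma_{k+1}$-collection and hence $M\satisfies(V_\delta\not\satisfies\ZFC)$; equivalently, were $V_\delta^M$ a model of \ZFC, applying the reflection theorem inside it and using the $\Sigma_k$-correctness of $\delta$ to identify the internal and external notions of $\Sigma_k$-correctness below $\delta$ would produce $\Sigma_k$-correct cardinals strictly between $\alpha$ and $\delta$, contradicting minimality. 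The same structure $\<M,{\in^M},\delta,m>_{m\in M}$ now witnesses $t$, by the argument of the previous paragraph with the last clause replaced by $V_\delta\not\satisfies\ZFC$. So $T_2$ is consistent, completing the proof. As anticipated, the only genuinely delicate step is the one flagged above --- that ``$V_\delta\satisfies\ZFC$'' genuinely floats free of the scheme ``$V_\delta\elesub V$'' --- since it is exactly this gap that makes $T_2$ consistent; everything else is a routine reprise of the compactness--reflection argument and the model-pair construction of Theorem~\ref{Theorem.FailureOfCollectionEvenOdd}.
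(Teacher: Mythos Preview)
Your proof is correct and follows essentially the same approach as the paper: you define the same theories $T_1$ and $T_2$, verify their finite consistency by the same reflection arguments (a worldly $\Sigma_k$-correct cardinal for $T_1$ via stationarity, and a successor $\Sigma_k$-correct cardinal for $T_2$), and assemble the models via a computably saturated model pair exactly as in Theorem~\ref{Theorem.FailureOfCollectionEvenOdd}. Your version is in fact slightly more careful, in that you explicitly include the clause $\set{m\in V_\delta\st m\in M}$ in $T_2$ (needed for the conclusion $M\elesub V_\delta^{M_2}$), which the paper's statement of $T_2$ appears to omit.
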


\begin{proof}
Fix any countable model $M\satisfies\ZFC$, such that the worldly cardinals form a stationary proper class in $M$. That is, every definable proper class club $C\of\Ord$ in $M$ contains some $\delta$ that is worldly in $M$. Let $T_1$ be the theory consisting of the elementary diagram $\Delta(M)$ plus the scheme of assertions $V_\delta\elesub V$, in the language with a new constant symbol for $\delta$, plus the assertions $a\in V_\delta$ for each constant symbol $a$ for an element $a\in M$, plus the assertion ``$\delta$ is worldly,'' which is to say, the assertion that $V_\delta\satisfies\ZFC$. Suppose that $t$ is a finite subtheory of $T_1$, which therefore involves only finitely many instances from the $V_\delta\elesub V$ scheme. Let $n$ be large enough so that all the formulas $\varphi$ in these instances have complexity at most $\Sigma_n$. Since the $\Sigma_n$-correct cardinals form a closed unbounded class and the worldly cardinals are stationary, there is a $\Sigma_n$-correct worldly cardinal $\delta$ in $M$. It follows that $M$ satisfies all the formulas in $t$ using this $\delta$, and so the theory $T_1$ is finitely consistent and hence consistent.

Let $T_2$ be the theory consisting of the elementary diagram $\Delta(M)$, the scheme $V_\delta\elesub V$ plus the assertion ``$\delta$ is not worldly.'' This theory also is finitely consistent, since if $t\of T_2$ is finite, then let $n$ be beyond the complexity of any formula appearing as an instance of $V_\delta\elesub V$ in $t$, and let $\delta$ be a $\Sigma_n$ correct cardinal in $M$ that is not worldly (for example, we could let $\delta$ be the next $\Sigma_n$-correct cardinal after some ordinal; this can never be worldly since $V_\delta$ will not satisfy $\Sigma_n$-reflection). It follows that $M$ with this $\delta$ satisfies every assertion in $t$, showing that $T_2$ is finitely consistent and hence consistent.

$$\begin{tikzpicture}[yscale=.7]
\draw (0,0) node {$\<M,\in^M>$};
\draw (5,1) node {$\<V_\delta,\in>^{M_1}\elesub \<M_1,\in^{M_1}>$};
\draw (5,-1) node {$\<V_\delta,\in>^{M_2}\elesub \<M_2,\in^{M_2}>$};
\draw (1.85,.5) node[rotate=20] {$\prec$};
\draw (1.85,-.5) node[rotate=-20] {$\prec$};
\draw (3.5,0) node[rotate=90] {$=$};
\end{tikzpicture}$$

Let $\<M_1,M_2>$ be a computably saturated model pair, where $M_1\satisfies T_1$ and $M_2\satisfies T_2$. It follows that $\<V_\delta^{M_1},V_\delta^{M_2}>$ is also a computably saturated model pair of models of set theory, and these both satisfy the elementary diagram of $M$. Consequently, they are isomorphic by an isomorphism that respects the interpretation of $M$ in them, and so by replacing with an isomorphic copy, we may assume that $\<V_\delta,{\in}>^{M_1}=\<V_\delta,{\in}>^{M_2}$. The theories $T_1$ and $T_2$ ensure that $V_\delta\elesub V$ in both $M_1$ and $M_2$, that $M\elesub V_\delta$, and furthermore, that $M_1\satisfies\delta$ {\it is worldly} and $M_2\satisfies\delta$ {\it is not worldly}, or in other words, $M_1\satisfies (V_\delta\satisfies\ZFC)$, but $M_2\satisfies (V_\delta\not\satisfies\ZFC)$, as desired.
\end{proof}

The hypothesis that the worldly cardinals form a stationary proper class is a consequence of the (strictly stronger) \Levy\ scheme, also known as {\it $\Ord$ is Mahlo}, asserting in effect that the inaccessible cardinals form a stationary proper class. This is in turn strictly weaker in consistency strength than the existence of a single Mahlo cardinal, since if $\kappa$ is Mahlo, then $V_\kappa\satisfies\Ord$ {\it is Mahlo}. So these are all rather weak large cardinal hypotheses, in terms of the large cardinal hierarchy. Meanwhile, the conclusion already explicitly has large cardinal strength, since $M_1\satisfies\delta$ is worldly. Furthermore, the ``Moreover,\ldots'' part of the conclusion makes the hypothesis optimal, since if $M_1\satisfies\delta$ is worldly and fully correct, then the worldly cardinals of $M_1$ form a stationary proper class in $V_\delta^{M_1}$, as any definable class club there extends to a class club in $M_1$ containing $\delta$.

As an example to illustrate the range of non-absoluteness, consider the case where $M$ is a model of \ZFC\ in which the worldly limits of Woodin cardinals are stationary. It follows from theorem \ref{Theorem.SatisfyingZFCNotAbsolute} that there are elementary extensions $M_1$ and $M_2$ of $M$ with a common rank initial segment $\<V_\delta,{\in}>^{M_1}=\<V_\delta,{\in}>^{M_2}$, such that
 $$M_1\satisfies(V_\delta\satisfies\ZFC+\text{there are a proper class of Woodin cardinals}),$$
but
 $$M_2\satisfies(V_\delta\not\satisfies\ZFC+\text{there are a proper class of Woodin cardinals}).$$
The proof shows that from the perspective of $M_2$, the common $V_\delta$ does not satisfy $\Sigma_n$-replacement for sufficiently large $n$, even though it satisfies full replacement from the perspective of $M_1$, which has all the same formulas.

We conclude this section with an observation that many have found curious. We learned this result from Brice Halimi \cite{Halimi:ModelsAndUniverses}, who emphasized that the proof is classical and the result may be folklore. If $\mathcal{M}=\<M,\in^M>$ is any model of set theory, and $\mathcal{M}\satisfies\<m,E>$ is a first-order structure, then we may regard $\<m,E>$ as an actual first-order structure in $V$, with domain $\set{a\in M\mid\mathcal{M}\satisfies a\in m}$ and relation $a\mathrel E b\iff \mathcal{M}\satisfies a\mathrel{E} b$. Thus, we extract the existential content of the structure from $\mathcal{M}$ to form an actual structure in $V$.

\begin{theorem}[\cite{Halimi:ModelsAndUniverses}]\label{Theorem.EveryModelHasAModel}
 Every model of \ZFC\ has an element that is a model of \ZFC. Specifically, if $\<M,\in^M>\satisfies\ZFC$, then there is an object $\<m,E>$ in $\mathcal{M}$, which when extracted as an actual structure in $V$, satisfies \ZFC.
\end{theorem}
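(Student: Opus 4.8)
The plan is to split on whether $\mathcal{M}$ is $\omega$-standard. In either case the goal is to produce an element $\<m,E>$ of $\mathcal{M}$ that $\mathcal{M}$ believes satisfies every axiom of \ZFC; the genuine satisfaction of the extracted structure in $V$ then follows from the absoluteness of satisfaction for standard-finite formulas discussed in the introduction. Indeed, for a standard formula $\varphi$, a routine induction shows that $\mathcal{M}$'s internal satisfaction relation for the structure $\<m,E>$ agrees with the real Tarski satisfaction of the extracted structure $\<\set{a\in M\st \mathcal{M}\satisfies a\in m},E>$ in $V$, the atomic clause holding by the very definition of the extracted relation and the quantifier clause holding because the extracted domain is exactly what $\mathcal{M}$ thinks are the elements of $m$.

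First I would treat the case where $\mathcal{M}$ is $\omega$-nonstandard. For each standard $n$, the reflection theorem is a \ZFC-provable scheme asserting that some $V_\alpha$ satisfies the finite fragment $\ZFC_n$ consisting of the first $n$ axioms of \ZFC\ in a fixed enumeration, so $\mathcal{M}$ believes, for every standard $n$, that $\ZFC_n$ has a set model. The set of $n$ for which $\mathcal{M}$ believes ``$\ZFC_n$ has a model'' is definable in $\mathcal{M}$ and contains every standard natural number, so by overspill it contains some nonstandard $k$. Fix $\<m,E>\in\mathcal{M}$ that $\mathcal{M}$ believes is a model of $\ZFC_k$. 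Since $k$ is nonstandard, every standard axiom of \ZFC\ occurs in $\ZFC_k$ from the meta-theoretic point of view, so $\mathcal{M}$ believes $\<m,E>$ satisfies each such axiom, and hence the extracted structure genuinely satisfies \ZFC\ in $V$ by the absoluteness remark above.

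Next I would treat the case where $\mathcal{M}$ is $\omega$-standard. Since $\mathcal{M}\satisfies\ZFC$, the theory \ZFC\ is genuinely consistent in $V$, having $\mathcal{M}$ itself as a model; and since $\mathcal{M}$ has the true natural numbers and hence the true hereditarily finite sets, any object $p$ that $\mathcal{M}$ believed to be a proof of a contradiction from \ZFC\ would be a genuine such proof, of which there is none. Thus $\mathcal{M}\satisfies\Con(\ZFC)$, and by the completeness theorem carried out inside $\mathcal{M}$ there is an $\<m,E>\in\mathcal{M}$ that $\mathcal{M}$ believes to be a model of \ZFC. Because $\omega^{\mathcal{M}}=\omega$, what $\mathcal{M}$ calls ``\ZFC'' is exactly the real \ZFC, so $\mathcal{M}$ believes $\<m,E>$ satisfies every axiom of \ZFC, and once more the extracted structure is a genuine model of \ZFC\ in $V$.

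I expect the main obstacle to be precisely the transfer between $\mathcal{M}$'s internal conception of satisfaction and genuine satisfaction in $V$: one may push a formula across only when it has standard-finite complexity. This is why the $\omega$-nonstandard case must be handled by overspilling to a model of a \emph{nonstandardly} long fragment of \ZFC\ (which is thereby guaranteed to include all the standard axioms), rather than by asking $\mathcal{M}$ outright for ``a model of \ZFC'' --- a request $\mathcal{M}$ may rightly refuse if it thinks \ZFC\ is inconsistent --- and it is why the $\omega$-standard case needs the separate, easy observation that $\omega$-standardness forces $\mathcal{M}$ to get the consistency of \ZFC\ right.
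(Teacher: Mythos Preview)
Your proposal is correct and follows essentially the same strategy as the paper's proof: split on whether $\mathcal{M}$ is $\omega$-standard, use reflection plus overspill in the nonstandard case to obtain a model of a nonstandard fragment of \ZFC, and use $\Con(\ZFC)$ plus the completeness theorem in the $\omega$-standard case. Your write-up is in fact more careful than the paper's, which leaves implicit both the transfer of satisfaction for standard-finite formulas and the reason $\mathcal{M}\satisfies\Con(\ZFC)$ when $\omega$-standard; the only cosmetic difference is that the paper takes the overspilled model to be some $V_\delta^{\mathcal{M}}$ specifically, whereas you take an arbitrary internal model of $\ZFC_k$.
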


\begin{proof}
Consider first the case that $\mathcal{M}$ is $\omega$-nonstandard. By the reflection theorem, any particular finite fragment of \ZFC\ is true in some $\<V_\delta^M,{\in}>^{\mathcal{M}}$, and so by overspill there must be some $\delta$ in $\mathcal{M}$ for which $\mathcal{M}$ believes $\<V_\delta,{\in}>^{\mathcal{M}}$ satisfies a nonstandard fragment of $\ZFC$, and so in particular it will satisfy the actual \ZFC, as desired. Meanwhile, if $\mathcal{M}$ is $\omega$-standard, then it must satisfy $\Con(\ZFC)$ and so it can build the Henkin model of \ZFC. So in any case, there is a model of \ZFC\ inside $\mathcal{M}$.
\end{proof}

What surprises some is that the argument succeeds even when $\mathcal{M}\satisfies\neg\Con(\ZFC)$, for although $\mathcal{M}$ thinks there is no model of \ZFC, nevertheless it has many actual models of \ZFC, which it rejects because its $\omega$-nonstandard nature causes it to have a false understanding of what \ZFC\ is. In the $\omega$-nonstandard case of theorem \ref{Theorem.EveryModelHasAModel}, the model $\<V_\delta,{\in}>^{\mathcal{M}}$ that is produced is actually a model of \ZFC, although $\mathcal{M}$ thinks it fails to satisfy some nonstandard part of \ZFC. Perhaps one way of highlighting the issue is to point out that the theorem asserts that every model $\mathcal{M}$ of \ZFC\ has an element $\<m,E>$, such that for every axiom $\sigma$ of \ZFC, the model $\mathcal{M}$ believes $\<m,E>\satisfies\sigma$; this is different from $\mathcal{M}$ believing that $\<m,E>$ satisfies every axiom of \ZFC, since when $\mathcal{M}$ is $\omega$-nonstandard it has additional axioms not present in the metatheory.

\section{Conclusions}\label{Section.Conclusions}

Let us now return to the motivating philosophical issue with which we began this article. In contention is whether the definiteness of arithmetic truth can be seen as a consequence of the definiteness of the underlying mathematical objects and structure, or to put it simply, whether definiteness-about-truth follows from definiteness-about-objects. Feferman implicitly supports this in his conclusion:
\begin{quote}
 In my view, the conception [of the bare order structure of the natural numbers $\N$, with its least element and the attendant operations of successor and predecessor] is {\it completely clear}, and thence {\it all arithmetical statements are definite}. \cite[p.6--7]{Feferman:CommentsForEFIWorkshop} (emphasis original)
\end{quote}
Donald Martin is more explicit in his article for the same conference series:
\begin{quote}
The concept of the natural numbers is first-order complete: it determines
truth values for all sentences of first-order arithmetic. That is, it implies each
first-order sentence or its negation. (p. 3)

\smallskip\noindent
What I am suggesting is that the real reason for confidence in first-order completeness is our confidence in the full determinateness of the concept of the natural numbers. \cite[p. 13]{Martin:CompletenessOrIncompletenessOfBasicMathematicalConcepts}
\end{quote}

Let us give a name to the general principle underlying these remarks, in order to help focus the discussion:
\begin{quote}
 {\bf Objects$\to$Truth determinateness principle}. Necessarily, if the objects and underlying operations and relations of a mathematical structure are definite and determined, then the corresponding theory of truth for that structure also is definite and determined.
\end{quote}
According to this principle, there can be no indeterminateness in our theory of arithmetic, if the mathematical structure in which that truth resides is definite and determined. No blurriness or fuzziness will sneak into our judgment of truth in a structure, if there is none in the underlying mathematical objects and structure. In short, the view is that whenever you know exactly what a structure is, then truth in that structure is fully determined.

Our main philosophical thesis is that we don't necessarily get determinate truth in this way, for free, from the determinateness of the underlying objects, but rather, determinate truth constitutes an additional higher-order commitment, which must be argued for and justified separately.
In particular, we do not legitimately deduce the determinateness of arithmetic truth based solely on our determinate understanding of the numbers $0$, $1$, $2$, $3$, and so on and their arithmetic structure.

%
%Weak: What does it mean to say that a mathematical structure is definite or determinate? This is, of course, a difficult philosophical problem of its own, on which Feferman and others have written at length \cite{?}\margin{references?}. For our purpose here, since we seek to focus detailed attention on a single step of reasoning, the step from definiteness-of-objects to definiteness-of-truth, let us simply understand these terms in the sense that they are used by Feferman and Martin.

\subsection{What is definiteness?}

What does it mean exactly to say that mathematical objects and mathematical structure are definite or that truth in a structure is definite or determined? We are inclined to place the burden of providing a full account of definiteness on those who are making the positive claim, that is, on our opponents who are supporting (if implicitly) the Objects$\to$Truth determinateness principle.

Meanwhile, however, in order to support our criticism of that too-easy jump from objects to truth, let us mention a few general ideas that we would find reasonable for any such notion of definiteness on offer to exhibit. To begin, in our view the notions of definiteness and determinateness of objects, structure, and truth are inherently concerned with a comparison of the objects and structure in different models or universe concepts and with the evaluation of truth judgments about that structure in those various worlds. That is, definiteness at bottom is an absoluteness claim made about a structure with respect to a class of worlds or universe concepts, asserting that the relevant objects, structure, and truth judgments are invariant across those worlds.

In this light, we would find it to be a sufficient condition for the definiteness of objects in a mathematical structure amongst a collection of worlds for those objects and that structure to exist identically in those worlds---all the worlds in question agree on which objects there are in the given structure and furthermore they are in perfect agreement about the fundamental atomic features of that structure, such as the interpretations of relations and operations in a first-order language. In such a case of agreement, we hold, the objects and structure could be said to be definite across those various worlds.

Similarly, we would find it to be a necessary condition for the definiteness of a given truth assertion about such a definite structure across a collection of worlds that they all come to the same judgement about the truth value of that assertion. That is, if the truth assertion is definite with respect to a collection of worlds or universe concepts, then at the very least, the assertion must have the same truth value in all those worlds. 

\subsection{Criticism of Objects$\to$Truth}

In light of these considerations, let us now argue for our central thesis, which is that the definiteness of the theory of truth for a structure does not necessarily follow as a consequence of the definiteness of the structure in which that truth resides.
%, but rather amounts to a higher-order ontological commitment requiring its own, separate justification.
In particular, we deny the Objects$\to$Truth determinateness principle. 

Even in the case of arithmetic truth and the standard model of arithmetic $\N$, we claim, it is a philosophical error to conclude that arithmetic truth is definite just on the basis that the natural numbers themselves and their structure $\<\N,{+},{\cdot},0,1,{\lt}>$ are definite. Definiteness of truth does not ride for free upon definiteness of objects, but rather one must say something more about it.

Our argument is a simple appeal to the mathematical results of the previous sections of this article, particularly theorems \ref{Theorem.SameNDifferentTruths} and \ref{Theorem.SameModelNDifferentS} and their corollaries, which show how it can be that we have definiteness about the objects and structural relations of a certain structure across a collection of worlds, while there is no definiteness for the truths residing in that structure across those worlds.

Namely, in the context of theorem \ref{Theorem.SameNDifferentTruths}, we have two set-theoretic worlds, $M_1$ and $M_2$, which agree completely on the objects and atomic structure of what they both agree is the standard model of arithmetic $\<\N,+,\cdot,0,1,<>^{M_1}=\<\N,+,\cdot,0,1,<>^{M_2}$, and yet they disagree on their judgments of arithmetic truth in this structure. They have a sentence $\sigma$ in common, which they both agree makes a certain arithmetic assertion---they both agree on the \Godel\ code of the sentence and on the syntactic structure of this sentence---and yet $M_1$ thinks that $\sigma$ is true in $\<\N,+,\cdot,0,1,<>$ while $M_2$ thinks that $\sigma$ is not true there. 

Because the two models $M_1$ and $M_2$ agree exactly on the objects and primitive structural relations of this structure, they fulfill the sufficient condition for the definiteness of objects for this structure. And indeed we could consider these models in the context of the larger collection of worlds that agree with them on the structure $\<\N,+,\cdot,0,1,<>$. This structure exists identically in all these worlds, and so we have definiteness of arithmetic objects across these worlds. But since these models do not all agree on their accounts of arithmetic truth, they violate the necessary requirement for the definiteness of truth. For this reason, the situation constitutes a counterexample to the Objects$\to$Truth definiteness principle.

We should like furthermore to emphasize that, for all we know, the situation of this example could be the actual state of affairs for our own mathematical ontology in regard to our arithmetic and set-theoretic concepts. For all we know, after all, we might be amongst the people living happily in such a world as $M_1$, taking the set-theoretic ontology of that world as our one true set-theoretic background---there seems no way in principle that we could ever know this was not the case. But in this event, we would look upon $\<\N,+,\cdot,0,1,<>^{M_1}$ as the standard model of arithmetic, blissfully unaware that there is another possible world $M_2$ with exactly this same structure of arithmetic and yet having different arithmetic truths. For this reason, we would be in error to deduce the definiteness of our theory of arithmetic truth solely on the basis of the seeming definiteness of the mathematical structure in which that truth resides, as there would be other set-theoretic realms with exactly the same arithmetic structure and yet different arithmetic truths.

\subsection{Definiteness of truth as a higher-order commitment}

In the situation we have exhibited, how exactly does the indefiniteness about arithmetic truth arise? How is it possible that the two models agree on the natural-number structure and yet disagree on arithmetic truth? Let us explain a little more about it.  In order for this indefiniteness to occur, we claim, the two models $M_1$ and $M_2$ must have different second-order arithmetic. In particular, they will have different incompatible arithmetic truth predicates. Neither model will be able to see the arithmetic truth predicates of the other because both of these predicates obey the Tarskian truth conditions and one may easily prove by induction that any two predicates satisfying the Tarskian conditions must agree universally. So neither model can have access to the arithmetic truth predicate of the other, and consequently, the models cannot agree on the power set of the natural numbers; so they cannot agree on the real numbers. In this way, indefiniteness of truth requires indefiniteness in the ontology of second-order objects. 

This is why we say that the commitment to a definite theory of truth for a structure is a higher-order ontological commitment, one going strictly beyond the definiteness of the underlying structure itself. The assertion that there is definite arithmetic truth is a claim about the definiteness of certain second-order objects, asserting that there is no indefiniteness for the second-order object that constitutes the truth predicate fulfilling the Tarski truth conditions. For the definiteness of arithmetic truth, it is not enough to express a definite nature for the natural numbers $0$, $1$, $2$, and so on, but rather one seems to require a degree of definiteness for the real numbers as well. 

%\subsection{Definiteness as object theory meta-theory alignment}
%
%Skip this point? Not really directly related to our thesis
%
%The claim of definiteness for arithmetic truth amounts in a certain sense to a claim that one's meta-theoretic concept of natural number aligns with the natural number concept in the object theory. One may prove by induction in the meta-theory, after all, that the truth of any standard-finite arithmetic sentence (that is, finite with the respect to the meta-theoretic natural number concept) is invariant with respect to any change in the set-theoretic background having the same natural number objects and structure. This is simply another way to look upon the fact that any two truth predicates on a given model of arithmetic must agree on their judgments for standard-finite formulas. Perhaps both Feferman and Martin, in their displayed quotations at the beginning of this section, would agree that the perceived clarity of the natural number concept leads them to have the same concept in both the meta-theory as in the object theory. And surely if one can be confident that one's meta-theoretic natural number concept coincides with the object-theoretic account, then one should expect definiteness of arithmetic truth. But what we would desire is an account of how this identity is supposed to work.

\subsection{The meaning of an assertion}

Roman Kossak has objected to our argument on the grounds that the arithmetic assertion $\sigma$, which has different truth values in the two models of set theory $M_1$ and $M_2$, nevertheless has different \emph{meanings} in these two structures. Although the two models agree on the natural number structure $\<\N,+,\cdot,0,1,<>^{M_1}=\<\N,+,\cdot,0,1,<>^{M_2}$, nevertheless, according to this objection, the meaning of the sentence $\sigma$ has changed. The proof of theorem \ref{Theorem.SameNDifferentTruths}, after all, shows that $M_1$ and $M_2$ are isomorphic by an isomorphism $\pi$ mapping a different sentence $\tau$ to $\sigma$, and so according to this objection, the sentence $\sigma$ carries in $M_2$ the same meaning that sentence $\tau$ carried in $M_1$, rather than the meaning carried by $\sigma$ itself in $M_1$. 
%Similarly, in corollary \ref{Corollary.SameStructuresDifferentTruths} statement (4) and theorem \ref{Theorem.FailureOfCollectionEvenOdd}, one might think that the meaning of the ordinal $\delta$ has changed in the move from $M_1$ to $M_2$.

Wherein does the meaning of a sentence lie? To our way of thinking, the meaning of a sentence $\sigma$ is best construed as precisely what that sentence expresses. And what the sentence expresses is determined completely by its compositional syntactic structure---the parse tree---for it is this compositional hereditary syntactic structure of the sentence that provides the instructions for how to calculate its truth, and this at bottom is what constitutes its meaning. The meaning of a sentence is precisely what the sentence expresses by its form.
%
%, particularly in the light of the fact that also $\N^{M_1}=\N^{M_2}$. In particular, all the fundamental syntactic features of $\sigma$ are the same in $M_1$ as in $M_2$, including the quantifier complexity, the scopes of quantifiers and the construction tree witnessing that $\sigma$ is a well-formed formula; these are all identical in $M_1$ as in $M_2$. Although we agree that the {\em truth} of an arithmetic sentence depends on the existence and nature of a second-order object---the arithmetic truth predicate---nevertheless the {\em meaning} of a sentence is more tightly connected with the syntactic nature of that sentence; the meaning is precisely what that sentence itself expresses. 

Tarski seems to agree that the meaning of a sentence is unambiguously determined by its syntactic form.
\begin{quote}
%§ 2. FORMALIZED LANGUAGES, ESPECIALLY THE LANGUAGE OF THE CALCULUS OF CLASSES
%
%For the reasons given in the preceding section I now abandon the attempt to solve our problem for the language of everyday life and
restrict[ing] myself henceforth entirely to formalized languages. These can be roughly characterized as artificially
% 1 The antinomy of heterological words (which I shall not describe here-cf. Grelling, K., and Nelson, L. (24), p. 307) is simpler than the antinomy of the liar in so far as no empirical premise analogous to (2) appears in its formula-tion; thus it leads to the correspondingly stronger consequence: there can be no consistent language which contains the ordinary laws of logic and satisfies two conditions which are analogous to (I) and (II), but differ from them in that they treat not of sentences but of names and not of the truth of sentences but of the relation of denoting. In this connection compare the discussion in § 5 of the present article -the beginning of the proof of Th. 1, and in particular p. 248, footnote 2.
%
% 2 The results obtained for formalized language also have a certain validity for colloquial language and that as owing to its universality: if we translate into colloquial language any definition of a true sentence which has been con-structed for some formalized language, we obtain a fragmentary definition of truth which embraces a wider or narrower category of sentences.
%
constructed languages in which the sense of every expression is unambiguously determined by its form. 
%Without attempting a completely exhaustive and precise description, which is a matter of considerable difficulty, I shall draw attention here to some essential properties which all the formalized languages possess: ($\alpha$) for each of these languages a list or description is given in structural terms of all the \emph{signs with which the expressions of the language are formed;} ($\beta$) among all possible expressions which can be formed with these signs those called sentences are distinguished by means of purely structural properties. Now formalized languages have hitherto been constructed exclusively for the purpose of studying deductive sciences formalized on the basis of such languages. The language and the science grow together to a single whole, so that we speak of the language of a particular formalized deductive science, instead of this or that formalized language. For this reason further characteristic properties of formalized languages appear in connexion with the way in which deductive sciences are built    
\qquad Alfred Tarski \cite[\S2]{Tarski1935:TheConceptOfTruthInFormalizedLanguages}
\end{quote}
According to this view, the meaning of an assertion is determined by what the assertion expresses, and thus is grounded ultimately in the exact nature of the parse tree of the assertion.

Since the models $M_1$ and $M_2$ agree on the arithmetic structure of the natural numbers, the sentence $\sigma$ has an identical parse tree structure in these models. And so the meaning of $\sigma$ in $M_1$ is precisely the same as the meaning of $\sigma$ in $M_2$. 

In particular, it is not the meaning of $\sigma$ that changed in the move from $M_1$ to $M_2$, we argue, but rather merely the truth judgment of $\sigma$ that has changed. There is of course an analogy between $\sigma$ in $M_2$ and $\tau$ in $M_1$, an analogy that is made precise and robust by the fact that they are automorphic images of each other. In the end, we therefore find Kossak's point to be not an objection to the argument, but rather an explanation of how it could come to be that arithmetic truth can vary without the objects and structure of arithmetic changing. 

Similarly, to argue in the context of theorem \ref{Theorem.SatisfyingZFCNotAbsolute} that $\delta$ refers in $M_2$ to something other than $\delta$ itself would seem to be a very hard task for Kossak. He replies to this by asking whether there is any meaning of $\delta$ at all other than that given by the semantics of the ambient universe? We believe that there is, since $\delta$ is a particular ordinal that both $M_1$ and $M_2$ have in common, and they agree on the cumulative hierarchy $V_\delta$ up to stage $\delta$, and one may inquire about the properties of this common object in $M_1$ and $M_2$. Our point is that because $M_1$ and $M_2$ agree on $V_\delta$, they will be forced to have a certain level of agreement of their theories of truth for this structure, but this agreement is not necessarily universal and ultimately $M_1$ and $M_2$ can disagree on whether certain statements are true in that common structure.

% The truth value of the assertion is thus not the same as its meaning, but rather is a higher-order judgement about the meaning---either that it is true or that it is not true---and this higher-order judgement is calculated in a second-order context according to the recursive definition of the satisfaction relation.

%In the models of our main theorem, the two worlds $M_1$ and $M_2$, we have an arithmetic assertion $\sigma$ that both worlds look upon as having exactly the same parse tree and thus this sentence expresses exactly the same meaning in both worlds, but the truth judgments about $\sigma$ in $M_1$ and $M_2$ are not the same. The two models agree on the first-order structure of $\<\N,+,\cdot,0,1,<>^{M_1}=\<\N,+,\cdot,0,1,<>^{M_2}$; they agree on the fact that $\sigma$ is an arithmetic sentence and they agree on the parse tree of this sentence; so they agree on the meaning of this sentence; but they disagree on the judgement of whether this sentence is true.

%How can this be? Well, the two models have different second-order ontology for arithmetic, and the satisfaction relation of $M_1$ is simply not present in $M_2$ and vice versa. Because of their differing and fundamentally incompatible solutions of the Tarski recursion, they come to different conclusions about whether $\sigma$ is true. This directly illustrates our contention that arithmetic truth is at bottom a higher-order commitment, which does not flow merely from agreement on the individuals and atomic judgments concerning a mathematical structure.

%\subsection{A better countermodel}

In connection with this objection, Kossak asked whether there is another method of producing examples where a given model of arithmetic has alternative truth predicates, and specifically, whether two models of set theory can have the same natural numbers, but non-isomorphic theories of arithmetic truth. That is, the question is whether one can have models for which $$\<\N,{+},{\cdot},0,1,{\lt}>^{M_1}=\<\N,{+},{\cdot},0,1,{\lt}>^{M_2}\text{, but}$$ $$\<\N,{+},{\cdot},0,1,<,\TA>^{M_1}\not\cong\<\N,{+},{\cdot},0,1,<,\TA>^{M_2}.$$
Jim Schmerl observed that indeed this situation occurs, arguing essentially as follows: the theory $T=\TA+\Th(\N,\TA)^\ZFC$, in the language with a satisfaction predicate $\Tr$, is computable from $0^{(\omega)}$, but the theory of $\<\N,{+},{\cdot},0,1,\lt,\TA>$ has degree $0^{(\omega+\omega)}$, and so $T$ is not complete in the expanded language. It follows that there are computably saturated models $\<\mathcal{N},\Tr_1>$ and $\<\mathcal{N},\Tr_2>$ of $T$, which have the same reduct to the language of arithmetic, but which are not elementarily equivalent in the full language and hence also not isomorphic. But since these models are computably saturated models of $\Th(\N,\TA)^\ZFC$, it follows by proposition \ref{Proposition.ZFCStandardTA} that they are both \ZFC-standard, arising as the standard model of arithmetic and arithmetic truth $\<\N,\TA>^{M_1}$ and $\<\N,\TA>^{M_2}$ inside models of \ZFC, as desired. This kind of example therefore seems to address Kossak's objection on the meaning of $\sigma$, while still establishing our main point, for in this case there is no automorphism providing one with an alternative meaning for the sentence $\sigma$ on which the models disagree.

\subsection{A spectrum of views on pluralism}

The question of definiteness of objects and truth is deeply connected with the issue of pluralism in mathematics and set theory, for pluralism itself is a kind of indefiniteness. Peter Koellner \cite{Koellner:HamkinsOnTheMultiverse} has emphasized that there is a hierarchy of positions to take on pluralism in mathematics, depending on where one expects indeterminism to first arise in mathematics, if it does so at all.
\begin{quote}
But, in fact, many people are non-pluralists with regard to certain branches of mathematics and pluralists with regard to others. For example, a very popular view in the foundations of mathematics embraces non-pluralism for first-order number theory while defending pluralism for the higher reaches of set theory. Most people would, for example, maintain that the Riemann Hypothesis (which is equivalent to a $\Pi^0_1$-statement of arithmetic) has a determinate truth-value and yet there are many who maintain that \CH\ (a statement of set theory, indeed, of third-order arithmetic) does not have a determinate truth-value. Feferman is an example of someone who holds this position. So instead of two positions---pluralism versus non-pluralism---we really have a hierarchy of positions. \cite{Koellner:HamkinsOnTheMultiverse}
\end{quote}
Summarizing the spectrum, on one end we have what might be described as the hard-core set-theoretic Platonists, such as Isaacson \cite{Isaacson2011:TheRealityOfMathematicsAndTheCaseOfSetTheory} (and to a more qualified extent, Woodin and Maddy \cite{Maddy1988:BelievingTheAxiomsI, Maddy1988:BelievingTheAxiomsII}), who affirm the existence of the universe of all sets, in which set-theoretic claims such as \CH\ have a definite truth value that we might come to know. Steel softens this position by allowing a restricted pluralism of mutually interpretable theories, such as occurs in the interaction of large cardinals, inner model theory and the theory of the axiom of determinacy. Martin \cite{Martin2001:MultipleUniversesOfSetsAndIndeterminateTruthValues} considers the case for determinism in set theory, but declining a full endorsement, argues instead only that there is at most one concept of set meeting his criteria, leaving explicitly open the possibility that in fact there is none. In the center of the spectrum we are considering, Feferman \cite{Feferman1999:DoesMathematicsNeedNewAxioms?} is committed to the Platonic existence of the natural numbers, while remaining circumspect about the definiteness of higher-order set-theoretic objects and truth. He asserts that ``the origin of Dedekind-Peano axioms is a clear intuitive concept,'' while the intuition for set theory ``is a far cry from what leads one to accept the Dedekind-Peano axioms,'' and based on the dichotomy between the ontological status of the structure of natural numbers and of the structure of higher types, he concludes that ``the Continuum Hypothesis is an inherently vague problem that no new axiom will settle in a convincingly definite way.'' Nik Weaver~\cite{Weaver2005:Predicativity-beyond-Gamma0} similarly calls for classical logic in the realm of arithmetic, where he takes mathematical assertions to have a clear and distinct definite nature; but in the set-theoretic realm of sets of numbers or sets of sets of numbers, which he takes to have a less definite nature, we should be using intuitionistic logic.

Meanwhile, Hamkins \cite{Hamkins2015:IsTheDreamSolutionToTheContinuumHypothesisAttainable,Hamkins2014:MultiverseOnVeqL, Hamkins2011:TheMultiverse:ANaturalContext, Hamkins2012:TheSet-TheoreticalMultiverse, GitmanHamkins2010:NaturalModelOfMultiverseAxioms} advances a more radical multiverse perspective that embraces pluralism even with respect to arithmetic truth. He criticizes the categoricity arguments for arithmetic definiteness---which are also used to justify higher-order definiteness, as in Isaacson \cite{Isaacson2011:TheRealityOfMathematicsAndTheCaseOfSetTheory}---as unsatisfactorily circular, in that they attempt to establish the definiteness of the natural number concept by appeal to second-order features involving a consideration of arbitrary subsets, which we would seem to have even less reason to think of as definite. More extreme positions, such as ultrafinitism, lie further along the spectrum.

\subsection{The nonstandardness objection}

It would be natural to object to our argument using $M_1$ and $M_2$ on the grounds that these models are $\omega$-nonstandard, and furthermore, every instance of this kind of non-determinism will be with models whose natural numbers are non-standard. Such an argument, however, seems to us to beg the question, since the issue is whether definiteness of truth follows from definiteness of objects and structure, not whether it follows from definiteness of objects and structure and the knowledge that those objects are ``standard'' with respect to some meta-mathematical conception of standardness. One would want to know, for example, about whether that conception of standardness is itself definite, for it is easy enough to establish that the concept of whether a given model of arithmetic is standard or not is not absolute between all models of set theory.

The natural number structure we consider, after all, is precisely the standard model of arithmetic from the point of view both of $M_1$ and $M_2$, since it is $\<\N,+,\cdot,0,1,<>^{M_1}$, which is identical to $\<\N,+,\cdot,0,1,<>^{M_2}$. So the structure we consider is already ``standard'' from the point of view of the set-theoretic models in which we consider it. It is only seen as nonstandard from a larger set-theoretic conception able to encompass both $M_1$ and $M_2$.

For our opponents to augment their argument for definite truth by making additional claims about the special nature of the natural numbers they consider, going beyond mere definiteness to a larger meta-theoretic notion of standardness, merely serves to verify our main point, which is that indeed something more needs to be said about it.

\subsection{Conclusion}

Ultimately, we argue that the definiteness of arithmetic truth does not flow for free from the definiteness of the numbers and the arithmetic structure $\<\N,+,\cdot,0,1,<>$, but rather amounts to a higher-order level of definiteness, for which one must argue separately. Even if two mathematicians might agree on the ontology of the numbers $0, 1, 2,\ldots$ and on their basic arithmetic structure, they may disagree on the judgments of arithmetic truth, if they do not agree on the second-order theory, as the models $M_1$ and $M_2$ in our main theorem show. A philosopher cannot legitimately claim to have a clear and definite conception of the underlying structure of arithmetic as a reason to take arithmetic truth also as definite.

Perhaps Feferman and the others will reply to the argument we have given in this article by saying that their conception of the natural numbers $0,1,2,\ldots$ and the natural number structure is so clear and distinct that they can use this overwhelming definiteness to see that arithmetic truth must also be definite. That is, perhaps they reply to our objection by saying that they did not base the definiteness of arithmetic truth solely on the fact that the natural number objects and structure are definite, but rather on something more, on the special nature of the definiteness of natural number objects and structure that they assert is manifest. For example, perhaps they would elaborate by explaining that arithmetic truth must be definite, because any  indefiniteness would reveal a proper cut in the natural numbers: consider the natural numbers $n$ for which $\Sigma_n$ truth is definite and argue that this contains $0$ and is closed under successors.  

That would be fine. Our response to such a reply, after first getting in the obvious objection about the indefinite nature of definiteness and whether it can be used in a mathematical induction in this way, would be to say that yes, indeed, this kind of further discussion and justification is exactly what we call for. Our main point, after all, is that definiteness-about-truth does not follow for free from definiteness-about-objects and that one must say something more about it.

\bibliographystyle{alpha}
\bibliography{MathBiblio,HamkinsBiblio,PhilBiblio,WebPosts}

\begin{thebibliography}{Mad88b}

\bibitem[CK90]{ChangKeisler1990:ModelTheory}
C.~C. Chang and H.~J. Keisler.
\newblock {\em {Model Theory}}, volume~73 of {\em {Studies in Logic and the Foundations of Mathematics}}.
\newblock North-Holland, third edition, 1990.

\bibitem[Ena09]{Enayat2009:OnZFSTandardModelsOfPA}
Ali Enayat.
\newblock On {ZF}-standard models of {PA}.
\newblock preliminary report, Mittag-Leffler Institute, 7 September 2009.

\bibitem[Fef91]{Feferman1991:ReflectingOnIncompleteness}
Solomon Feferman.
\newblock Reflecting on incompleteness.
\newblock {\em Journal of Symbolic Logic}, 56(1):1--49, 1991.

\bibitem[Fef99]{Feferman1999:DoesMathematicsNeedNewAxioms?}
Solomon Feferman.
\newblock Does mathematics need new axioms?
\newblock {\em American Mathematical Monthly}, 106(2):99--111, 1999.

\bibitem[Fef13]{Feferman:CommentsForEFIWorkshop}
Sol Feferman.
\newblock Comments for {EFI} workshop meet.
\newblock pages 1--9, 2013.
\newblock Exploring the Frontiers of Incompleteness, Harvard, August 31--September 1, 2013.

\bibitem[GB93]{GuptaBelnap1993:TheRevisionTheoryOfTruth}
Anil Gupta and Nuel Belnap.
\newblock {\em {The Revision Theory of Truth}}.
\newblock A Bradford Book. MIT Press, 1993.

\bibitem[GH10]{GitmanHamkins2010:NaturalModelOfMultiverseAxioms}
Victoria Gitman and Joel~David Hamkins.
\newblock A natural model of the multiverse axioms.
\newblock {\em Notre Dame Journal of Formal Logic}, 51(4):475--484, 2010.

\bibitem[Hal09]{Halimi:ModelsAndUniverses}
Brice Halimi.
\newblock Models and universes.
\newblock 2009.
\newblock manuscript, 19 pages.

\bibitem[Ham11]{Hamkins2011:TheMultiverse:ANaturalContext}
Joel~David Hamkins.
\newblock The set-theoretic multiverse : A natural context for set theory.
\newblock {\em Annals of the Japan Association for Philosophy of Science}, 19:37--55, 2011.

\bibitem[Ham12]{Hamkins2012:TheSet-TheoreticalMultiverse}
Joel~David Hamkins.
\newblock The set-theoretic multiverse.
\newblock {\em Review of Symbolic Logic}, 5:416--449, 2012.

\bibitem[Ham13]{MO141387Hamkins:IsThereASubsetOfTheNaturalNumberPlaneWhichDoesntKnowWhichOfItsSlicesAreArithmetic?}
Joel~David Hamkins.
\newblock Is there a subset of the natural number plane, which doesn't know which of its slices are arithmetic?
\newblock MathOverflow question, 2013.
\newblock http://mathoverflow.net/q/141387 (version: 2013-09-05).

\bibitem[Ham14]{Hamkins2014:MultiverseOnVeqL}
Joel~David Hamkins.
\newblock A multiverse perspective on the axiom of constructibility.
\newblock In {\em {Infinity and Truth}}, volume~25 of {\em LNS Math Natl. Univ. Singap.}, pages 25--45. World Sci. Publ., Hackensack, NJ, 2014.

\bibitem[Ham15]{Hamkins2015:IsTheDreamSolutionToTheContinuumHypothesisAttainable}
Joel~David Hamkins.
\newblock Is the dream solution of the continuum hypothesis attainable?
\newblock {\em Notre Dame Journal of Formal Logic}, 56(1):135--145, 2015.

\bibitem[Isa11]{Isaacson2011:TheRealityOfMathematicsAndTheCaseOfSetTheory}
Daniel Isaacson.
\newblock The reality of mathematics and the case of set theory.
\newblock In Zsolt Novak and Andras Simonyi, editors, {\em {Truth, Reference, and Realism}}, pages 1--76. Central European University Press, 2011.

\bibitem[Kay91]{Kaye1991:ModelsOfPeanoArithemtic}
Richard Kaye.
\newblock {\em {Models of Peano Arithmetic}}, volume~15 of {\em Oxford Logic Guides}.
\newblock The Clarendon Press and Oxford University Press, 1991.
\newblock Oxford Science Publications.

\bibitem[KK88]{KossakKotlarski1988:ResultsOnAutomorphismsOfRecursivelySaturatedModelsOfPA}
R.~Kossak and H.~Kotlarski.
\newblock Results on automorphisms of recursively saturated models of {PA}.
\newblock {\em Fund. Math.}, 129(1):9--15, 1988.

\bibitem[KK92]{KossakKotlarski92:GameApproximationsOfSatisfactionClassesAndTheProblemOfRatherClasslessModels}
Roman Kossak and Henryk Kotlarski.
\newblock Game approximations of satisfaction classes and the problem of rather classless models.
\newblock {\em Zeitschrift f\"ur Mathematische Logik und Grundlagen der Mathematik}, 38:21--26, 1992.

\bibitem[Koe13]{Koellner:HamkinsOnTheMultiverse}
Peter Koellner.
\newblock Hamkins on the multiverse.
\newblock pages 1--27, 1 May 2013.
\newblock \emph{Exploring the Frontiers of Incompleteness}, Harvard, August 31--September 1, 2013.

\bibitem[Kra74]{Krajewski1974:MutuallyInconsistentSatisfactionClasses}
S.~Krajewski.
\newblock Mutually inconsistent satisfaction classes.
\newblock {\em Bull. Acad. Polon. Sci. S\'er. Sci. Math. Astronom. Phys.}, 22:983--987, 1974.

\bibitem[Kra76]{Krajewski1976:Non-standardSatisfactionClasses}
S.~Krajewski.
\newblock Non-standard satisfaction classes.
\newblock In Wiktor Marek, Marian Srebrny, and Andrzej Zarach, editors, {\em {Set Theory and Hierarchy Theory A Memorial Tribute to Andrzej Mostowski}}, volume 537 of {\em Lecture Notes in Mathematics}, pages 121--144. Springer Berlin Heidelberg, 1976.

\bibitem[Kri75]{Kripke1975:OutlineOfATheoryOfTruth}
Saul Kripke.
\newblock Outline of a theory of truth.
\newblock {\em The journal of philosophy}, 72(19):690--716, 1975.

\bibitem[KS06]{KossakSchmerl2006:TheStructureOfModelsOfPA}
Roman Kossak and James~H. Schmerl.
\newblock {\em {The Structure of Models of Peano Arithmetic}}, volume~50 of {\em Oxford Logic Guides}.
\newblock The Clarendon Press and Oxford University Press, Oxford, 2006.
\newblock Oxford Science Publications.

\bibitem[Mad88a]{Maddy1988:BelievingTheAxiomsI}
Penelope Maddy.
\newblock Believing the axioms, {I}.
\newblock {\em The Journal of Symbolic Logic}, 53(2):481--511, 1988.

\bibitem[Mad88b]{Maddy1988:BelievingTheAxiomsII}
Penelope Maddy.
\newblock Believing the axioms, {II}.
\newblock {\em The Journal of Symbolic Logic}, 53(3):736--764, 1988.

\bibitem[Mar01]{Martin2001:MultipleUniversesOfSetsAndIndeterminateTruthValues}
Donald~A. Martin.
\newblock Multiple universes of sets and indeterminate truth values.
\newblock {\em Topoi}, 20(1):5--16, 2001.

\bibitem[Mar12]{Martin:CompletenessOrIncompletenessOfBasicMathematicalConcepts}
Donald~A. Martin.
\newblock Completeness or incompleteness of basic mathematical concepts, draft 3/15/12.
\newblock pages 1--26, 2012.
\newblock Exploring the Frontiers of Incompleteness, Harvard, April 4, 2012.

\bibitem[Sch78]{Schlipf1978:TowardModelTheoryThroughRecursiveSaturation}
John~Stewart Schlipf.
\newblock Toward model theory through recursive saturation.
\newblock {\em J. Symbolic Logic}, 43(2):183--206, 1978.

\bibitem[Smo81]{Smorynski1981:RecursivelySaturatedNonstandardModelsOfArithmetic}
C.~Smory{\'n}ski.
\newblock Recursively saturated nonstandard models of arithmetic.
\newblock {\em J. Symbolic Logic}, 46(2):259--286, 1981.

\bibitem[Tar35]{Tarski1935:TheConceptOfTruthInFormalizedLanguages}
Alfred Tarski.
\newblock The concept of truth in formalized languages.
\newblock In {\em {Logic, Semantics, Metamatheamtics, 2nd edition}}, pages 152--278. Hackett, Indianapolis, IN, 1983 [1935].

\bibitem[Wea05]{Weaver2005:Predicativity-beyond-Gamma0}
Nik Weaver.
\newblock Predicativity beyond {$\Gamma_0$}.
\newblock {\em Mathematics ArXiv e-prints}, 2005.

\end{thebibliography}
\end{document}